\documentclass[12pt]{article}

\usepackage{graphicx}

\usepackage[margin=1in]{geometry}

\usepackage{amsmath,amssymb,amsthm}
\usepackage{xcolor,setspace,adjustbox}
\usepackage{booktabs,multirow,makecell,threeparttable}
\usepackage{verbatim}
\usepackage{csquotes}
\usepackage{siunitx}
\usepackage[colorlinks=true,linkcolor=blue,citecolor=blue,urlcolor=blue]{hyperref}
\usepackage{apacite}

\newcommand{\bfTheta}{\boldsymbol{\Theta}}
\newcommand{\bfalpha}{\boldsymbol{\alpha}}
\newcommand{\bfepsilon}{\boldsymbol{\epsilon}}
\newcommand{\bfpi}{\boldsymbol{\pi}}
\newcommand{\bftheta}{\boldsymbol{\theta}}
\newcommand{\bfxi}{\boldsymbol{\xi}}
\newtheorem{theorem}{Theorem}[section]
\newtheorem{lemma}[theorem]{Lemma}
\newtheorem{remark}[theorem]{Remark}
\newtheorem{assumption}{Assumption}
\newtheorem{condition}{Condition}[section]

\begin{document}

\title{On the Asymptotics of Item Selection in Multidimensional Computerized Adaptive Testing}
\author{
  Seungwon Lee\\
  School of Statistics, University of Minnesota\vspace{0.5cm}\\
  Xiaoou Li\\
  School of Statistics, University of Minnesota}
\date{}
\maketitle

\onehalfspacing
\begin{abstract}
We study Fisher-information-based item-selection rules for multidimensional computerized adaptive testing (MCAT) with intentional and nuisance abilities. Although such rules are widely used, rigorous asymptotic theory for the coupled dynamics of adaptive selection and latent-trait estimation remains limited. We consider a weighted A-optimality criterion that prioritizes intentional abilities while controlling accuracy in nuisance dimensions. For the multidimensional two-parameter logistic model, we first establish asymptotic optimality under an item-type reuse regime: paired with the maximum likelihood estimator, the weighted A-optimal rule attains the minimum asymptotic weighted mean squared error. We then prove consistency and asymptotic normality of the maximum likelihood estimator and express its asymptotic weighted mean squared error through a limiting Fisher information matrix. We also extend the analysis to the operational setting in which each item is administered at most once. These results justify normal approximations and information-based standard errors in MCAT. A simulation study examines finite-sample behavior at practical test lengths and shows favorable weighted mean squared error, with item-selection patterns consistent with the predicted limiting behavior.
\end{abstract}

\noindent
Keywords:
{multidimensional computerized adaptive testing},
{Fisher information},
{weighted A-optimality},
{asymptotic optimality}

\section{Introduction}
Computerized adaptive testing (CAT) is a testing framework in which items are selected sequentially and adaptively based on a test-taker’s previous responses. By tailoring the test to the individual’s latent trait, CAT typically achieves more accurate ability estimation with shorter test lengths than non-adaptive testing designs. Test responses in CAT are commonly analyzed using item response theory (IRT) models, in which the probability of a correct response is expressed as a function of an unobserved latent trait (i.e., ability parameter) associated with the test-taker.

When the latent trait is unidimensional, an individual’s ability is represented by a single scalar parameter, leading to unidimensional CAT (UCAT). In contrast, multidimensional CAT (MCAT) extends this framework by modeling ability as a vector of multiple latent dimensions through multidimensional IRT (MIRT) models \cite{luecht1996multidimensional, mcdonald1997normal, reckase1997linear}. Central to both UCAT and MCAT is the item selection rule, which determines the next item to be administered given the test-taker's history. Consequently, a variety of item selection methods have been proposed and studied in the literature.

One line of work selects items using the Fisher information matrix, which quantifies the information an item contributes to the estimation of latent ability parameters. This approach includes A-optimal and D-optimal criteria, both rooted in the optimal experimental design literature \cite{silvey2013optimal,atkinson2007optimum}. Under A-optimality, the next item is chosen to minimize the trace of the inverse Fisher information matrix, whereas under D-optimality, the next item is chosen to minimize the determinant of the inverse Fisher information matrix. Because these criteria summarize the information matrix in different ways, they generally lead to different item selections. Their empirical performance in MCAT was examined by \citeA{mulder2009multidimensional}. In the same work, the authors introduced the distinction between intentional and nuisance abilities in MCAT, where intentional abilities represent the primary traits of interest and nuisance abilities capture secondary dimensions affecting responses. This distinction enables more targeted and efficient measurement of the abilities of interest. To this end, they proposed the $\mathrm{D}_s$-optimality criterion, which focuses on efficient estimation of the intentional abilities while accounting for the presence of nuisance dimensions. Beyond Fisher information-based approaches, several alternative criteria have been proposed for both UCAT and MCAT. These include Kullback–Leibler information-based criteria \cite{chang1996global} and Bayesian mutual information-based criteria \cite{veldkamp2002multidimensional,mulder2009kullback}. Comparative studies of Fisher information-based, KL-based, and mutual information-based approaches can be found in \citeA{chang2015psychometrics}, \citeA{wang2009kullback}, and \citeA{wang2011item}.

Despite the substantial literature on item selection rules for CAT and MCAT, most existing work evaluates these rules primarily through empirical simulation. While the asymptotic theoretical properties of ability estimators have been rigorously established for certain UCAT designs (e.g., \citeNP{chang2009nonlinear,wang2017computerized}), to the best of our knowledge, rigorous theoretical results regarding the consistency and asymptotic normality of latent trait estimation and the limiting behavior of item selection rules in MCAT are largely absent from the psychometric literature. A primary challenge in establishing this theoretical support is that standard selection rules are typically myopic (i.e., optimizing for the immediate next item), whereas consistent estimation in a multidimensional setting requires ensuring that the selected items sufficiently span the directions of the latent trait space.

In this paper, we provide a theoretical analysis that bridges this gap, linking common empirical practice to a rigorous asymptotic theory for MCAT. Focusing on Fisher information-based approaches in the presence of intentional and nuisance abilities, we establish the asymptotic normality and optimality of item selection criteria under specific performance metrics, including mean squared error. These results offer a rigorous justification for using normal approximations in constructing confidence intervals and for quantifying the efficiency of different selection rules. We also complement this asymptotic theory with simulation studies, demonstrating that our theoretical results provide valuable insights for practical test settings.

We build upon and extend the general active learning framework established by \citeA{li2025globally}, who analyzed a general class of active estimation problems with applications to MCAT. However, their theoretical results were limited to settings where items may be selected repeatedly in the same test. In contrast, our framework accommodates the operational MCAT constraint that each item is administered at most once to a given examinee, a setting that is more representative of realistic MCAT applications. Furthermore, unlike \citeA{li2025globally}, our analysis distinguishes between intentional and nuisance abilities, thereby addressing a more general problem and enabling more accurate psychometric measurement.

The rest of the paper is organized as follows. In Section \ref{sec:problem}, we formulate the problem of interest, including the modeling of intentional and nuisance abilities. Section \ref{sec:methods} introduces the estimation procedure, the item-selection rules, and the evaluation metrics. Section \ref{sec:theory} presents the theoretical analysis. Section \ref{sec:simulation} reports simulation studies that assess the theoretical results under practical test lengths. Finally, Section \ref{sec:discuss} summarizes the main findings and discusses potential directions for future research. Technical proofs and additional simulation results are given in the Appendix.

\section{Problem Formulation}
\label{sec:problem}
Consider an MCAT with item pool $\mathcal{J}$. Let $Y \in \{0,1\}$ be the response to item $j \in \mathcal{J}$, where $Y=1$ indicates a correct answer and $Y=0$ otherwise. Under a MIRT model, a test-taker is characterized by an ability parameter $\bftheta \in \mathbb{R}^d$, and the response probability is given by an item response function (IRF) $p_j(\bftheta) = \Pr(Y=1|\bftheta)$.
In this study, we focus on the multidimensional two-parameter logistic (M2PL) model. Each item $j \in \mathcal{J}$ is associated with a discrimination parameter $\boldsymbol{\alpha}_j \in \mathbb{R}^d$ and a difficulty parameter $b_j \in \mathbb{R}$, with response probability
\begin{equation} \label{eq:model}
p_j(\bftheta) = \operatorname{logit}^{-1}(\boldsymbol{\alpha}_j^\top\bftheta - b_j),
\end{equation}
where $\operatorname{logit}^{-1}(x) = \exp(x)/(1+\exp(x))$. The item parameters $\{(\boldsymbol{\alpha}_j, b_j): j \in \mathcal{J}\}$ are assumed to be known from prior calibration.

In an MCAT, items are selected adaptively and responses are collected sequentially. Specifically, for a test of length $T$, at each step $t=1, \ldots, T$, let $j_t \in \mathcal{J}$ denote the index of the selected item and $Y_t \in \{0,1\}$ denote the corresponding response. The item $j_t$ is chosen according to an item-selection rule that depends on the history observed up to step $t-1$, defined as $H_{t-1} = \{j_1, \ldots, j_{t-1}, Y_1, \ldots, Y_{t-1}\}$. The response $Y_t$ is then observed with conditional distribution $Y_{t} \mid H_{t-1} \sim \operatorname{Bernoulli}(p_{j_t}(\bftheta))$. After observing $Y_{t}$, an ability estimator $\widehat{\bftheta}_t$ is obtained based on the accumulated history $H_t$.

\subsection{Partitioning Abilities}

Following \citeA{mulder2009multidimensional}, we partition the ability vector as $\bftheta = (\bftheta_I^\top, \bftheta_N^\top)^\top$, where $\bftheta_I \in \mathbb{R}^{d_I}$ ($d_I \ge 1$) and $\bftheta_N \in \mathbb{R}^{d_N}$ are column vectors representing the intentional and nuisance dimensions, respectively, with $d = d_I + d_N$. The intentional abilities are of primary interest, while the nuisance abilities are secondary traits that may influence responses but are not the target of measurement. The goal of MCAT in this setting is to estimate $\bftheta_I$ accurately while controlling for the uncertainty in $\bftheta_N$.

Although this partition is introduced to address MCAT with nuisance dimensions, our theoretical results are more general: they cover the standard MCAT case (where $d_N=0$ and all dimensions are intentional) as well as the UCAT case (where $d=1$).

\section{Methods and Evaluation Metrics} \label{sec:methods}
In this section, we introduce the ability estimators, item-selection rules, and evaluation metrics used in the paper.

\subsection{Ability Estimators}
We first define the maximum likelihood (ML) estimator. At step $t$, it is given by
\begin{equation*}
\hat{\bftheta}_t^{\mathrm{ML}}=\arg\max_{\bftheta \in \bfTheta} \sum_{s=1}^t\log p_{j_s}(\bftheta)^{Y_s}(1-p_{j_s}(\bftheta))^{1-Y_s},
\end{equation*}
where $\bfTheta \subseteq \mathbb{R}^d$ is a compact parameter space (e.g., $\bfTheta = [-3,3]^d$). Other commonly used estimators include Bayesian procedures such as posterior mode estimators \cite{segall1996multidimensional, segall2009principles}.

In this study, we focus on the ML estimator because of its widespread use in operational MCAT systems. In practice, the covariance of the ML estimator is often approximated by the inverse of the cumulative Fisher information matrix.
The cumulative Fisher information matrix after $t-1$ steps is defined as 
\begin{equation} \label{e:cumulative_fisher}
\Lambda_{t-1}( \bftheta )=\sum_{s=1}^{t-1} \mathcal{I}_{j_s}(\bftheta), 
\qquad \mathcal{I}_j(\bftheta)=p_j(\bftheta)(1-p_j(\bftheta))\bfalpha_j\bfalpha_j^\top,
\end{equation}
where $\mathcal{I}_j(\bftheta)$ is the Fisher information matrix of item $j$ at $\bftheta$.

\subsection{Item-Selection Rules} \label{sub:itemselection}
In this subsection, we describe item-selection rules based on the Fisher information matrix. These rules choose the next item by minimizing a scalar criterion of the inverse Fisher information matrix. We first introduce rules that treat all latent traits equally, followed by criteria designed to distinguish between intentional and nuisance abilities.
\paragraph{Item-Selection Rules Treating All Abilities Equally.}
When all dimensions of $\bftheta$ are of equal interest, we choose the next item that minimizes the determinant (i.e., D-optimality) or trace (i.e., A-optimality) of the inverse Fisher information matrix.

\paragraph{D-optimal item-selection rule} 
For the item pool $\mathcal{J}$, the D-optimal item-selection rule selects the item $j_t$ as follows.
\begin{equation} \label{e:d-optimal}
 j_t \in \underset{j \in \mathcal{J} \setminus \{j_1, \ldots, j_{t-1}\}}{\arg \min} \operatorname{det} \Big[ \Big\{ \Lambda_{t-1}( \hat{\bftheta}_{t-1}^{\mathrm{ML}}) + \mathcal{I}_j(\hat{\bftheta}_{t-1}^{\mathrm{ML}}) \Big\}^{-1} \Big],
\end{equation}
where $\operatorname{det}(\cdot)$ denotes the determinant, and $\Lambda_{t-1}(\hat{\bftheta}_{t-1}^{\mathrm{ML}})$ and $\mathcal{I}_j(\hat{\boldsymbol{\theta}}_{t-1}^{\mathrm{ML}})$ are defined in Equation~\eqref{e:cumulative_fisher}. This D-optimal item-selection rule has also been considered in \citeA{mulder2009multidimensional} and \citeA{wang2011item}. A Bayesian variant that uses the posterior covariance matrix under a multivariate normal prior was proposed in \citeA{segall1996multidimensional}.

\paragraph{A-optimal item-selection rule} 
For the item pool $\mathcal{J}$, the A-optimal item-selection rule selects the item $j_t$ as follows.
\begin{equation} \label{e:a-optimal}
 j_t \in \underset{j \in \mathcal{J} \setminus \{j_1, \ldots, j_{t-1}\}}{\arg \min} \operatorname{tr} \Big[ \Big\{ \Lambda_{t-1}( \hat{\bftheta}_{t-1}^{\mathrm{ML}}) + \mathcal{I}_j(\hat{\bftheta}_{t-1}^{\mathrm{ML}}) \Big\}^{-1} \Big],
\end{equation}
where $\operatorname{tr}(\cdot)$ denotes the trace, and $\Lambda_{t-1}(\hat{\bftheta}_{t-1}^{\mathrm{ML}})$ and $\mathcal{I}_j(\hat{\boldsymbol{\theta}}_{t-1}^{\mathrm{ML}})$ are defined in Equation~\eqref{e:cumulative_fisher}. This A-optimal item-selection rule was also introduced in \citeA{van1999multidimensional} and \citeA{mulder2009multidimensional}.

\paragraph{Item-Selection Rules Distinguishing between Intentional and Nuisance Abilities.}
To incorporate intentional and nuisance abilities as in \citeA{mulder2009multidimensional}, we order the abilities so that nuisance dimensions are placed after intentional dimensions. Specifically, for a $d \times d$ positive definite symmetric matrix $\Sigma$, we write
\begin{equation} \label{e:decomp}
\Sigma =
\begin{bmatrix}
 \Sigma_{II} & \Sigma_{IN}
 \\ \Sigma_{NI} & \Sigma_{NN}
\end{bmatrix},
\end{equation}
where $\Sigma_{II}$ is the $d_I \times d_I$ block corresponding to the intentional dimensions, and $\Sigma_{NN}$ is the $d_N \times d_N$ block corresponding to the nuisance dimensions. Later in this subsection, we replace $\Sigma$ by an approximation to the covariance matrix of $\hat{\bftheta}^{\text{ML}}_{t}$.
We represent different item-selection rules using a unified criterion function $\Phi_{q,\delta}: \mathcal{S}_d^{+} \to \mathbb{R}$, where $\mathcal{S}_d^{+}$ denotes the set of $d \times d$ positive definite symmetric matrices. 
The mapping $\Phi_{q,\delta}$ is indexed by a criterion parameter $q \ge 0$ and a pre-specified weight $\delta \in (0,1]$. It is defined as
\begin{equation} \label{e:selection}
\Phi_{q,\delta}(\Sigma) =
\begin{cases}
  (1 - \delta) \log \operatorname{det}( \Sigma_{II} ) + \delta \log \operatorname{det}( \Sigma ) & q = 0,
  \\ (1 - \delta) \{ \operatorname{tr}( (\Sigma_{II})^{q} )\}^{1/q} + \delta \{ \operatorname{tr}(\Sigma^q) \}^{1/q}  & q > 0.
\end{cases}
\end{equation}
We then consider the following item-selection rule:
\begin{equation} \label{e:phi-q-delta-optimal}
  j_t \in \underset{j \in \mathcal{J} \setminus \{j_1, \ldots, j_{t-1}\}}{\arg \min} \Phi_{q, \delta} \Big[ \Big\{ \Lambda_{t-1}( \hat{\bftheta}_{t-1}^{\mathrm{ML}}) + \mathcal{I}_j(\hat{\bftheta}_{t-1}^{\mathrm{ML}}) \Big\}^{-1} \Big],
\end{equation}
where $\Lambda_{t-1}( \hat{\bftheta}_{t-1}^{\mathrm{ML}} )$ and $\mathcal{I}_j(\hat{\boldsymbol{\theta}}_{t-1}^{\mathrm{ML}})$ are defined in Equation~\eqref{e:cumulative_fisher}. 

We comment on the above item-selection rule.

First, \(\hat{\Sigma} =\big\{\Lambda_{t-1}(\hat{\bftheta}_{t-1}^{\mathrm{ML}})+\mathcal{I}_j(\hat{\bftheta}_{t-1}^{\mathrm{ML}})\big\}^{-1}\) serves as a plug-in approximation to the asymptotic covariance matrix of the ML estimator. In classical i.i.d. settings, the inverse observed Fisher information is routinely used for this purpose. In our adaptive setting, however, this approximation does not automatically hold because the data are collected sequentially and depend on past observations through the selection rule. Our theoretical results establish that \(\hat{\Sigma}\) approximates the asymptotic covariance of the ML estimator, similar to the classical case.

Second, the criterion \(\Phi_{q,\delta}\) depends on the partitioned matrix 
\begin{equation*}
\hat{\Sigma} =
\begin{bmatrix}
 \hat{\Sigma}_{II} & \hat{\Sigma}_{IN}
 \\ \hat{\Sigma}_{NI} & \hat{\Sigma}_{NN}
\end{bmatrix}.
\end{equation*}
Specifically, $\Phi_{q,\delta}(\hat{\Sigma}) = (1-\delta)\phi_q(\hat{\Sigma}_{II})+\delta\phi_q(\hat{\Sigma})$, where $\phi_q(\Sigma) = \{\operatorname{tr}(\Sigma^q)\}^{1/q}$ for $q>0$ and $\phi_0(\Sigma) = \log \operatorname{det}(\Sigma)$. The class $\phi_q$ is well known in the optimal design literature \cite{pukelsheim2006optimal}. It includes common criteria as special cases: $q=1$ corresponds to the A-optimality selection rule in Equation~\eqref{e:a-optimal}, whereas $q=0$ corresponds to D-optimality in Equation~\eqref{e:d-optimal}. The criterion $\Phi_{q,\delta}$ generalizes these choices by balancing a target-block criterion $\phi_q(\hat{\Sigma}_{II})$ with an overall criterion $\phi_q(\hat{\Sigma})$ for the full ability vector. The first term emphasizes precision for the intentional abilities, whereas the second term prevents the nuisance dimensions from being ignored. The weight $\delta \in (0,1]$ controls this balance. When $\delta = 1$, intentional and nuisance abilities are treated equally. As $\delta$ gets closer to $0$, the criterion puts more emphasis on the intentional abilities while retaining a term that controls uncertainty in the full ability vector.

Several special cases are worth noting. First, the case $q=1$ with fixed $\delta>0$ gives the weighted A-optimal rule analyzed in our main optimality result.
\paragraph{Weighted A-optimal item-selection rule} For the item pool $\mathcal{J}$ and a fixed weight $\delta \in (0,1]$, the weighted A-optimal item-selection rule chooses the next item $j_t$ as follows.
\begin{equation} \label{e:wa-optimal}
  j_t \in \underset{j \in \mathcal{J} \setminus \{j_1, \ldots, j_{t-1}\}}{\arg \min} \Phi_{1, \delta} \Big[ \Big\{ \Lambda_{t-1}( \hat{\bftheta}_{t-1}^{\mathrm{ML}}) + \mathcal{I}_j(\hat{\bftheta}_{t-1}^{\mathrm{ML}}) \Big\}^{-1} \Big].
\end{equation}
Another special case occurs at the endpoint $\delta \to 0$. When $q=1$ or $q=0$, respectively, the rule reduces to the $\mathrm{A}_s$-optimality or $\mathrm{D}_s$-optimality criterion.
\paragraph{$\mathrm{A}_s$- and $\mathrm{D}_s$-optimal item-selection rules} 
In the presence of intentional and nuisance abilities, \citeA{mulder2009multidimensional} introduced $\operatorname{A_s}$- and $\operatorname{D_s}$-optimality, which prioritize estimation accuracy for the intentional abilities. For the item pool $\mathcal{J}$, the $\operatorname{A_s}$-optimal item-selection rule selects the item $j_t$ as follows.
\begin{equation} \label{e:as-optimal}
 j_t \in \underset{j \in \mathcal{J} \setminus \{j_1, \ldots, j_{t-1}\}}{\arg \min} \operatorname{tr} \Big[ \Big( \{ \Lambda_{t-1}( \hat{\bftheta}_{t-1}^{\mathrm{ML}} ) + \mathcal{I}_j(\hat{\bftheta}_{t-1}^{\mathrm{ML}})\}^{-1} \Big)_{II} \Big],
\end{equation}
where $\Big( \{ \Lambda_{t-1}( \hat{\bftheta}_{t-1}^{\mathrm{ML}} ) + \mathcal{I}_j(\hat{\bftheta}_{t-1}^{\mathrm{ML}}) \}^{-1} \Big)_{II}$ is the first $d_I \times d_I$ block corresponding to the intentional abilities, with $\Lambda_{t-1}(\hat{\bftheta}_{t-1}^{\mathrm{ML}})$ and $\mathcal{I}_j(\hat{\bftheta}_{t-1}^{\mathrm{ML}})$ defined in Equation~\eqref{e:cumulative_fisher}. Similarly, the $\operatorname{D_s}$-optimal item-selection rule selects the item $j_t$ as follows.
\begin{equation} \label{e:ds-optimal}
 j_t \in \underset{j \in \mathcal{J} \setminus \{j_1, \ldots, j_{t-1}\}}{\arg \min} \operatorname{det} \Big[ \Big( \{ \Lambda_{t-1}( \hat{\bftheta}_{t-1}^{\mathrm{ML}} ) + \mathcal{I}_j(\hat{\bftheta}_{t-1}^{\mathrm{ML}})\}^{-1} \Big)_{II} \Big].
\end{equation}

\begin{remark}
Algebraically, the weighted A-optimal item-selection rule in Equation~\eqref{e:wa-optimal} coincides with the $\operatorname{A_K}$-optimal item-selection rule of \citeA{sagnol2015computing}. The standard use of $\operatorname{A_K}$-optimality focuses on lower-dimensional linear combinations of the abilities, whereas our formulation prioritizes intentional abilities while accounting for nuisance abilities. We point out that the weighted A-optimal item selection is relatively new in the psychometric literature: it extends the $\operatorname{A_s}$-optimal item-selection rule of \citeA{mulder2009multidimensional} to more general settings through the choice of $\delta$.
\end{remark}

\subsection{Evaluation Metrics} \label{sub:evaluation}
We write the true ability vector as $\bftheta^{\ast} = ((\bftheta^{\ast}_I)^{\top}, (\bftheta^{\ast}_N)^{\top})^{\top}$, where $\bftheta^{\ast}_I$ and $\bftheta^{\ast}_N$ denote the true intentional and nuisance ability components, respectively. For a test of length $T$, we write the estimator as $\hat{\bftheta}_T = (\hat{\bftheta}_{I,T}^{\top}, \hat{\bftheta}_{N,T}^{\top})^{\top}$. To evaluate the performance of different item-selection rules and estimators, we consider the mean squared error (MSE) of the intentional and nuisance dimensions. The MSE for the intentional dimensions is defined as
\begin{equation*}
\operatorname{MSE}_I(\hat{\bftheta}_T)=\mathbb{E}[\|\hat{\bftheta}_{I,T}-\bftheta_I^\ast\|^2],
\end{equation*}
and the MSE for the nuisance dimensions is defined as 
\begin{equation*}
\operatorname{MSE}_N(\hat{\bftheta}_T) = \mathbb{E}[\|\hat{\bftheta}_{N,T}-\bftheta_N^\ast\|^2],
\end{equation*}
where the expectation is taken with respect to the joint distribution of the responses $\{Y_t\}_{t=1}^T$, and $\|\cdot\|$ denotes the Euclidean norm. We further define the weighted MSE (WMSE) as 
\begin{equation*}
\operatorname{WMSE}_{\delta}(\hat{\bftheta}_T)=\operatorname{MSE}_I(\hat{\bftheta}_T)+\delta \operatorname{MSE}_N(\hat{\bftheta}_T).
\end{equation*}
Here, $\delta \in (0,1]$ controls the relative importance assigned to nuisance dimensions. When $\delta=1$, $\operatorname{WMSE}_{\delta}(\hat{\bftheta}_T)$ reduces to the overall MSE. When $\delta$ is close to $0$, $\operatorname{WMSE}_{\delta}(\hat{\bftheta}_T)$ is close to the MSE for the intentional dimensions. A smaller WMSE indicates a more accurate estimator under this weighted criterion. Our focus in this work is rigorous theoretical comparison of item-selection rules and estimators beyond specific simulation settings. To this end, we consider the asymptotic behavior of the WMSE as the test length $T\to\infty$.

We say that an estimator paired with a selection rule $(\hat{\bftheta}_T, \{j_1,\cdots, j_T\})$ is asymptotically optimal if for any other estimator and selection rule $(\tilde{\bftheta}_T, \{\tilde{j}_1,\cdots, \tilde{j}_T\})$ and any fixed $\delta \in (0,1]$, 
\[\limsup_{T\to\infty}\frac{\operatorname{WMSE}_{\delta}(\widehat{\boldsymbol{\theta}}_T)}{\operatorname{WMSE}_{\delta}(\widetilde{\boldsymbol{\theta}}_T)}\leq 1.\]
In other words, no other estimator and item-selection rule can achieve a strictly smaller asymptotic WMSE as $T\to\infty$. This is a strong mathematical notion, because it compares a selection-estimation procedure against a broad class of possible estimator--selection-rule pairs, rather than only against a finite set of pre-specified methods in a simulation study. For a rigorous mathematical statement, this broad comparison is made within a reasonable class of estimator--selection-rule pairs, which is specified precisely in Section \ref{sec:theory}. Establishing such asymptotic optimality provides theoretical justification complementing prior empirical studies, which typically demonstrate the superiority of certain methods only under specific simulation settings.

\section{Weighted-Target Optimality and Asymptotic Normality}
\label{sec:theory}
In this section, we present the main theoretical results under an asymptotic framework in which the test length grows. Specifically, we consider a sequence of tests indexed by $r\to\infty$, with test length $T_r$ nondecreasing in $r$ and satisfying $T_r\to\infty$. The item pool may also depend on $r$; we denote it by $\mathcal{J}_r$ and allow $|\mathcal{J}_r|$ to grow with $T_r$. This growing-pool formulation is used for asymptotic analysis and reflects the operational reality that test length is typically small relative to the available item pool.

We consider two item-pool regimes. Regime R1 is an item-type reuse regime, in which the pool is represented through finitely many calibrated item-parameter values, each corresponding to an item type. Here, reuse refers to selecting an item type more than once while administering distinct operational items. Regime R2 is a unique-item-parameter regime, in which each operational item is treated as having its own calibrated item-parameter values rather than belonging to a repeated type.
\begin{assumption}[R1: Item-type reuse regime] \label{ass:r1}
Suppose there are $M \in \mathbb{Z}_+$ item types, where $\mathbb{Z}_+$ is the set of positive integers.  
Under R1, an item pool with $M$ calibrated types and $J_r$ operational items is represented as a multiset (i.e., a set that may contain repeating elements)
\begin{equation} \label{e:r1}
\mathcal{J}_{r} 
= \{\,\underbrace{1,\ldots,1}_{J_r/M\text{ times}},\
     \underbrace{2,\ldots,2}_{J_r/M\text{ times}},\ \ldots,\
     \underbrace{M,\ldots,M}_{J_r/M\text{ times}}\,\}, 
\end{equation}
where $J_r$ is a positive integer divisible by $M$ such that $J_r/M\geq T_r$ and $\lim_{r\to\infty}J_r=\infty$.
Letting $r \to \infty$ gives the limiting item pool $\mathcal{J}_{\infty} = \{1, 1, \cdots, 2, 2, \cdots, M, M, \cdots\}$. 
\end{assumption}
We elaborate on R1 as follows.
In practice, a calibrated item pool may contain groups of items with very similar discrimination and difficulty parameters. R1 represents each such group by an item type: items of the same type share calibrated parameters in the asymptotic analysis, but they remain operationally distinct items. The condition $|\mathcal{J}_r|/M \ge T_r$ means that each type contains enough distinct items to support a full test. Thus, even if the adaptive rule selects the same type at several steps, the test can administer different operational items and will not run out of items of that type. This regime can therefore be viewed as an idealized version of using a large calibrated pool whose items have been grouped into parameter-similar clusters. From a theoretical perspective, R1 corresponds to the ``action reuse'' framework adopted in \citeA{li2025globally} and is related to fixed-type assumptions used in theoretical analyses of CAT and sequential design problems \cite{bartroff2008modern,wang2015sequential}. Such analyses often assume that the empirical selection frequencies of item types converge to fixed limits (e.g., \citeNP{bartroff2008modern}). However, for commonly used adaptive item-selection rules, these limiting empirical selection frequencies are typically assumed rather than derived from the rule itself. Our analysis fills this gap by deriving the limiting empirical selection frequencies generated by the weighted A-optimal rule. To the best of our knowledge, this is the first asymptotic derivation of such limiting frequencies for an information-based MCAT item-selection rule.

\begin{assumption}[R2: Unique-item-parameter regime]
\label{ass:r2}
Suppose the pool contains $J_r \in \mathbb{N}$ calibrated items, where $J_r$ depends on the auxiliary index $r$, satisfies $\lim_{r \to \infty} J_r = \infty$, and is nondecreasing in $r$. For the feasibility of the test, we further assume $J_r \geq T_r$. Then, the item pool for the test of length $T_r$ is given by
\begin{equation} \label{e:r2}
\mathcal{J}_{r} = \{1, \cdots, J_r\},
\end{equation}
Letting $r \to \infty$ gives the limiting item pool $\mathcal{J}_{\infty} = \{1,2,3, \cdots \}$.
\end{assumption}
The feasibility condition $J_r \geq T_r$ ensures that the pool contains enough items for a test of length $T_r$. Unlike R1, this regime does not impose a repeated-type structure on the item parameters; each operational item is treated as its own calibrated item.

Next, we state the regularity conditions used in our theoretical analysis.

\begin{assumption} \label{ass:compact} 
The parameter space $\boldsymbol{\Theta}$ is a non-empty compact and convex subset of $\mathbb{R}^d$. The true ability $\bftheta^{\ast}$ is an interior point of $\boldsymbol{\Theta}$.
\end{assumption}

\begin{assumption} \label{ass:dimension}
Under Assumption \ref{ass:r1} (R1), 
$$
\operatorname{dim}( \operatorname{span} \{ \bfalpha_j : j \in \{1,\cdots,M\} \}) = d,
$$
where $\bfalpha_j$ is defined in \eqref{eq:model}. 
\end{assumption}

\begin{assumption} \label{ass:information1} (Eigen growth) There exist $c>0$ and $r_0 > 0$ such that $\underline\lambda_{T_r} \geq c$ almost surely for all $r \ge r_0$. Here, $\underline\lambda_{T_r} = \lambda_{\min}(\frac{1}{T_r} \Lambda_{T_r}( \hat{\bftheta}_{T_{r}}^{\mathrm{ML}} ))$, where $\lambda_{\min}(\Sigma)$ denotes the smallest eigenvalue of the matrix $\Sigma$, and $\Lambda_{T_r}(  \hat{\bftheta}_{T_r}^{\mathrm{ML}}  )$ is defined in \eqref{e:cumulative_fisher}. 
\end{assumption}

\begin{assumption} \label{ass:information2} (Stabilization) $\lim_{r \to \infty} \frac{1}{T_r} \Lambda_{T_r}( \hat{\bftheta}_{T_r}^{\mathrm{ML}} ) = \mathcal I_{\infty}$ almost surely for some deterministic positive definite matrix $\mathcal I_\infty$.
\end{assumption}

\begin{assumption} \label{ass:information3} (Uniform boundedness) For the item pool $\mathcal{J}_{\infty}$ defined under either Assumption \ref{ass:r1} or \ref{ass:r2}, $\sup_{j \in \mathcal{J}_{\infty}} \| \bfalpha_j \|_2 < \infty$ and $\sup_{j \in \mathcal{J}_{\infty}} |b_j| < \infty$. 
\end{assumption}

The following theorems use these assumptions in different combinations. We begin with the asymptotic optimality result for the weighted A-optimal selection rule paired with the ML estimator.

\begin{theorem}[Weighted-target optimality]
\label{thm:optimality}
Suppose Assumptions \ref{ass:r1} (R1), \ref{ass:compact}, and \ref{ass:dimension} hold. Then, the following statements hold.
\begin{enumerate}
\item Define an empirical frequency vector $\bar{\bfpi}_{T_r} = \Big( \frac{1}{T_r} \sum_{i=1}^{T_r} \mathbf{1}_{(j_i = 1)}, \ldots, \frac{1}{T_r} \sum_{i=1}^{T_r} \mathbf{1}_{(j_i = M)} \Big)^{\top}$, where $j_i$ is chosen using the item-selection criterion \eqref{e:wa-optimal}. For $q = 1$ and $\delta \in (0,1]$, assume that $\Phi_{1,\delta}\Big[ \Big\{ \sum_{a=1}^{M} \pi_a \mathcal{I}_a(\bftheta^{\ast}) \Big\}^{-1}  \Big]$ has a unique minimizer $\bfpi^{\ast} = (\pi_1^{\ast}, \ldots, \pi_M^{\ast}) \in \mathcal{S}^{M-1}$, where $\mathcal{S}^{M-1}=\{\bfpi \in \mathbb{R}^M: \pi_a\geq 0, \sum_{a=1}^M \pi_a=1\}$ is the standard simplex in $\mathbb{R}^M$. Then, 
\begin{equation} \label{eq:freqlimit}
\lim_{r \to \infty} \bar{\bfpi}_{T_r} = \bfpi^{\ast} \quad \text{(almost surely).}
\end{equation}
\item Let $\hat{\bftheta}_{T_r}$ be any unbiased estimator (i.e., $\mathbb{E}_{\bftheta} [\hat{\bftheta}_{T_r}] = \bftheta$ for all $\bftheta \in \bfTheta$) under an arbitrary item-selection rule. Then, 
\begin{equation} \label{eq:lowerbound}
\liminf_{r\to\infty} {T_r} \cdot \operatorname{WMSE}_\delta(\hat{\bftheta}_{T_r}) \geq \min_{\bfpi=(\pi_1,\cdots,\pi_M)\in \mathcal{S}^{M-1}} \Phi_{1,\delta} \Big[  \Big\{ \sum_{a=1}^M \pi_a \mathcal{I}_a(\bftheta^{\ast}) \Big\}^{-1} \Big],
\end{equation}
where $\mathcal{I}_a(\bftheta^{\ast})$ is the Fisher information matrix defined in \eqref{e:cumulative_fisher}. 
\item The weighted A-optimal item selection rule defined in \eqref{e:wa-optimal} paired with the ML estimator $\hat{\bftheta}_{T_r}^{\mathrm{ML}}$ achieves the lower bound in \eqref{eq:lowerbound}. That is,
\begin{equation} \label{eq:upperbound}
\lim_{r\to\infty}\{ T_r \cdot \operatorname{WMSE}_\delta(\hat{\bftheta}_{T_r}^{\mathrm{ML}})\} = \min_{\bfpi=(\pi_1,\cdots,\pi_M)\in \mathcal{S}^{M-1}} \Phi_{1,\delta} \Big[  \Big\{ \sum_{a=1}^M \pi_a \mathcal{I}_a(\bftheta^{\ast}) \Big\}^{-1} \Big].
\end{equation}
\end{enumerate}
\end{theorem}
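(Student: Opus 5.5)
The three parts call for different tools. Part 2 follows from a multivariate Cramér--Rao bound. Part 1 is a stochastic-approximation (Frank--Wolfe type) argument for the greedy rule. Part 3 combines Part 1 with an asymptotic normality and uniform integrability argument for the ML estimator. We work throughout with the convex function $F(\bfpi)=\Phi_{1,\delta}[\{\sum_a \pi_a\mathcal I_a(\bftheta^\ast)\}^{-1}]$ on $\mathcal S^{M-1}$. Write $\Phi_{1,\delta}(\Sigma)=\operatorname{tr}(W\Sigma)$ with $W=\mathrm{diag}(I_{d_I},\delta I_{d_N})$, since $(1-\delta)\operatorname{tr}\Sigma_{II}+\delta\operatorname{tr}\Sigma=\operatorname{tr}(W\Sigma)$. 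Then $F(\bfpi)=\operatorname{tr}(W M(\bfpi)^{-1})$ with $M(\bfpi)=\sum_a\pi_a\mathcal I_a(\bftheta^\ast)$. This function is convex, and it is finite exactly when $M(\bfpi)$ is nonsingular. Its minimum is finite by Assumption \ref{ass:dimension}, since $p(1-p)>0$ on the compact $\bfTheta$.

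\textbf{Part 2.} For an unbiased estimator under any adaptive rule, the Cramér--Rao inequality for sequential designs gives $\operatorname{Cov}(\hat\bftheta_{T_r})\succeq \{\mathbb E\Lambda_{T_r}(\bftheta^\ast)\}^{-1}$. The reason is that the score is a martingale, so the Fisher information of the full history equals $\mathbb E\sum_s\mathcal I_{j_s}(\bftheta^\ast)$. Next, $\mathbb E\Lambda_{T_r}(\bftheta^\ast)=T_r M(\tilde\bfpi_r)$, where $\tilde\bfpi_r=\mathbb E\bar\bfpi_{T_r}\in\mathcal S^{M-1}$. Therefore $T_r\operatorname{WMSE}_\delta\ge T_r\operatorname{tr}(W\operatorname{Cov})\ge F(\tilde\bfpi_r)\ge\min F$ holds for every $r$, and the $\liminf$ follows. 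Bias is zero by assumption.

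\textbf{Part 1.} This is the main obstacle, because selection uses $\hat\bftheta^{\rm ML}_{t-1}$ rather than $\bftheta^\ast$.

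The first step is a uniform \emph{forced exploration} lemma: $\lambda_{\min}(\Lambda_t(\bftheta))\ge c\,t^{\gamma}$ uniformly in $\bftheta\in\bfTheta$ eventually, for some $\gamma>0$. The argument runs as follows. If some direction $v$ had information much smaller than the others, then $\operatorname{tr}(W\Lambda^{-1})$ would be dominated by $v^\top\Lambda^{-1}Wv$-type terms. Adding an item $a$ with $\bfalpha_a^\top v\neq0$, which exists by Assumption \ref{ass:dimension}, then yields a reduction of order $\lambda_{\min}^{-2}$. This beats any item lacking such a component, so poorly informed directions get refreshed. Uniform bounds on $p(1-p)$ over compact $\bfTheta$ make this hold for every plug-in $\bftheta$.

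Given exploration, standard martingale arguments (a strong law for $\sum_s(Y_s-p_{j_s})\bfalpha_{j_s}$ together with identifiability of the M2PL with spanning $\bfalpha$'s) give strong consistency $\hat\bftheta^{\rm ML}_t\to\bftheta^\ast$ a.s.

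With consistency in hand, the greedy step has the following form. Writing $\Lambda_{t-1}=(t-1)M(\bar\bfpi_{t-1})$ up to $o(t)$ perturbations from the estimate, a first-order expansion $\operatorname{tr}(W(\Lambda+\mathcal I_j)^{-1})\approx \operatorname{tr}(W\Lambda^{-1})-\operatorname{tr}(W\Lambda^{-1}\mathcal I_j\Lambda^{-1})$ shows that the rule picks, up to $o(1)$ error, the vertex $e_a$ minimizing $\langle\nabla F(\bar\bfpi_{t-1}),e_a\rangle$. So $\bar\bfpi_t=\bar\bfpi_{t-1}+\tfrac1t(e_{j_t}-\bar\bfpi_{t-1})$ is a Frank--Wolfe iteration with step $1/t$ and vanishing gradient error.

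The Frank--Wolfe analysis goes in three steps. First, $F$ is smooth on the region where $\lambda_{\min}(M)$ is bounded below, and this region contains the iterates by exploration (via a barrier-type argument, since $F\to\infty$ at singular $M$). Second, the recursion then gives $F(\bar\bfpi_t)-\min F\le C(\log t)/t+o(1)\to0$. Third, uniqueness of the minimizer $\bfpi^\ast$ and continuity yield $\bar\bfpi_{T_r}\to\bfpi^\ast$ a.s.

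\textbf{Part 3.} Part 1 gives $T_r^{-1}\Lambda_{T_r}(\hat\bftheta_{T_r})\to M(\bfpi^\ast)$ a.s. A Taylor expansion of the score, together with the martingale CLT (whose conditional variance converges by that same limit), gives $\sqrt{T_r}(\hat\bftheta_{T_r}-\bftheta^\ast)\Rightarrow N(0,M(\bfpi^\ast)^{-1})$. Convergence of $T_r\operatorname{WMSE}_\delta$ then needs uniform integrability of $T_r\|\hat\bftheta_{T_r}-\bftheta^\ast\|^2$. We would obtain it from exponential martingale (Freedman) inequalities on the score combined with the deterministic lower bound on $\Lambda$ from the exploration lemma, with compactness of $\bfTheta$ bounding the error on the bad event. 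The limit is $\operatorname{tr}(WM(\bfpi^\ast)^{-1})=\min F$.
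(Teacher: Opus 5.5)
Your route differs from the paper's. The paper proves nothing from scratch. It checks the regularity Conditions B.1--B.9 of \citeA{li2025globally} (Lemma~\ref{lemma:lemma1} for $\Phi_{1,\delta}$, Lemma~\ref{lemma:lemma2} for the rest), then cites their Theorem 6 for Part 1 and Theorem 2 for Parts 2--3. The linear eigen-growth behind those results (the paper's Assumption~\ref{ass:information1}, with nonrandom $r_0$) comes from their Proposition 1.

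Your Part 2 is correct and more elementary than needed. The adaptive Cram\'er--Rao bound with $\mathbb E\Lambda_{T_r}(\bftheta^\ast)=T_rM(\tilde\bfpi_r)$ gives $T_r\operatorname{WMSE}_\delta\ge\min F$ for every $r$, not only in the limit. Differentiation under $\mathbb E$ is trivial because the history takes finitely many values, and unbiasedness forces $\mathbb E\Lambda_{T_r}$ to be nonsingular. Your Part 1, a Wynn/Frank--Wolfe analysis of the greedy rule, is a legitimate self-contained alternative, though the exploration lemma is only sketched.

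\textbf{The gap is the uniform integrability step in Part 3.} You derive it from Freedman-type bounds plus ``the deterministic lower bound on $\Lambda$ from the exploration lemma.'' That bound is only $\lambda_{\min}(\Lambda_t)\ge ct^{\gamma}$. Set $S_T=\sum_s(Y_s-p_{j_s}(\bftheta^\ast))\bfalpha_{j_s}$, $G_T=\sum_s\bfalpha_{j_s}\bfalpha_{j_s}^\top$ and $m_1=\inf p(1-p)>0$. Strong concavity of the log-likelihood on the convex set $\bfTheta$ then gives $\|\hat\bftheta_T-\bftheta^\ast\|^2\le 4\|G_T^{-1/2}S_T\|^2/\{m_1^2\lambda_{\min}(G_T)\}$. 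With $\gamma<1$, these ingredients only give $T\|\hat\bftheta_T-\bftheta^\ast\|^2=O(T^{1-\gamma}\log T)$ on the good event, which is not bounded.

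Your barrier argument in Part 1 does give linear growth, but only almost surely after a random time. It therefore says nothing about $P\{\lambda_{\min}(\Lambda_T)<cT\}$. Bounding the bad event by $T\operatorname{diam}(\bfTheta)^2$ needs this probability to be $o(1/T)$. So you obtain the normal limit, but not $T_r\operatorname{WMSE}_\delta\to\min F$.

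The missing ingredient is quantitative linear eigen-growth, and there are two ways to supply it.
\begin{enumerate}
\item Prove the exploration lemma with $\gamma=1$ and a nonrandom $t_0$. Then $T\|\hat\bftheta_T-\bftheta^\ast\|^2\le C\|S_T\|^2/T$, which is uniformly integrable since the increments are bounded, and your consistency and Frank--Wolfe steps also simplify.
\item Make Part 1 quantitative. Show $\sup_{T/2\le s\le T}\|\hat\bftheta_s-\bftheta^\ast\|\le\epsilon$ off an event of probability $o(1/T)$; self-normalized tail bounds plus your $t^\gamma$ lemma give this. On that event, Cauchy--Schwarz against $M(\bfpi^\ast)$ gives $\max_a\operatorname{tr}(W\Lambda^{-1}\mathcal I_a\Lambda^{-1})\ge\{\operatorname{tr}(W\Lambda^{-1})\}^2/\min F$. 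Hence $\psi_t=\operatorname{tr}(W\Lambda_t(\bftheta^\ast)^{-1})$ satisfies $\psi_t\le\psi_{t-1}-c\,\psi_{t-1}^2$, so $\psi_T\le C/T$.
\end{enumerate}

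The same rate issue affects your consistency step. With only $t^\gamma$ growth, a strong law $S_t=o(t)$ is not enough. You need $\|S_t\|=o(t^\gamma)$, which holds for $\gamma>1/2$, or otherwise a Lai--Wei self-normalized bound using $\log t=o(t^\gamma)$.
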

\begin{proof}
See Appendix \ref{appendix:proof}.
\end{proof}
Hence, the weighted A-optimal item selection rule paired with the ML estimator is asymptotically optimal in the sense that for any unbiased estimator paired with an arbitrary selection rule $(\tilde{\bftheta}_{T_r}, \{\tilde{j}_1,\cdots, \tilde{j}_{T_r}\})$, we have
\begin{equation*}
\limsup_{r\to\infty}\frac{\operatorname{WMSE}_{\delta}(\hat{\bftheta}_{T_r}^{\mathrm{ML}})}{\operatorname{WMSE}_{\delta}(\tilde{\bftheta}_{T_r})}\leq 1.
\end{equation*}

\begin{remark}[Unbiasedness assumption]
\label{rem:unbiased}
The unbiasedness assumption in Theorem \ref{thm:optimality} is a technical condition that simplifies the proof. It extends the classical Cramer-Rao lower bound, originally established for independent data, to the adaptive setting. We note that the ML estimator is usually biased in finite samples. Thus, Theorem \ref{thm:optimality} does not directly state that the ML estimator has the minimal risk within a comparison class that includes the ML estimator itself. This is analogous to the classical asymptotic efficiency theory for ML estimation with i.i.d. data, where the ML estimator is shown to match the asymptotic variance of the Cramer-Rao lower bound for unbiased estimators, even though the ML estimator itself is not unbiased. It is possible to show that the ML estimator has the asymptotically smallest risk among a reasonable class of estimators and selection rules, but the theorem statement becomes much more technical with little additional practical insight. We therefore present the unbiased version for clarity. Interested readers may refer to \citeA{li2025globally} for a theorem statement addressing this point in a similar active-estimation setting.
\end{remark}

The next theorem gives sufficient conditions for consistency and asymptotic normality of the ML estimator under a general item-selection rule. This result provides the theoretical basis for standard information-based confidence intervals in MCAT practice.

\begin{theorem}[Asymptotic normality of ML estimator]
\label{thm:normality}
Suppose either Assumption \ref{ass:r1} (R1) or Assumption \ref{ass:r2} (R2) holds. For the item sequence generated by the selection rule, suppose %
Assumption \ref{ass:compact} and Assumptions \ref{ass:information1}--\ref{ass:information3} hold. 
Then, we have
\begin{enumerate}
\item (Consistency) $\lim_{r\to\infty} \hat{\bftheta}^{\mathrm{ML}}_{T_r}=\bftheta^{\ast}$ almost surely. 
\item (Asymptotic normality) As $r \to \infty$,
\begin{equation}
\sqrt{T_r}(\hat{\bftheta}^{\mathrm{ML}}_{T_r}- \bftheta^{\ast})\overset{d}{\to} \mathcal{N}(0, \mathcal{I}_\infty^{-1}).
\end{equation}
\item (Asymptotic $\operatorname{WMSE}_{\delta}$) For each $\delta \in (0,1]$, under the item-selection criterion \eqref{e:wa-optimal},
$$
\lim_{r\to\infty}\{ T_r\cdot \operatorname{WMSE}_\delta(\hat{\bftheta}_{T_r}^{\mathrm{ML}})\} = \Phi_{1,\delta} \Big[ \mathcal{I}_{\infty}^{-1} \Big].
$$
\end{enumerate}
\end{theorem}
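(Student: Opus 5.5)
The plan is a martingale argument for the score, combined with the eigen-growth and stabilization conditions to control the adaptively accumulated information. Write the log-likelihood as $\ell_T(\bftheta)=\sum_{s=1}^T\{Y_s\log p_{j_s}(\bftheta)+(1-Y_s)\log(1-p_{j_s}(\bftheta))\}$. Its score is $S_T(\bftheta)=\sum_{s=1}^T (Y_s-p_{j_s}(\bftheta))\bfalpha_{j_s}$, and its negative Hessian is exactly $\Lambda_T(\bftheta)$. This holds because for the M2PL model the observed and expected information coincide and are deterministic given the item sequence.

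Since $j_s$ is $H_{s-1}$-measurable, $\{S_t(\bftheta^\ast)\}$ is a martingale with respect to $H_t$. By Assumption \ref{ass:information3} its increments are bounded, and its predictable quadratic variation is $\Lambda_T(\bftheta^\ast)$.

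\textbf{Consistency.} First I transfer Assumptions \ref{ass:information1}--\ref{ass:information2}, stated at $\hat{\bftheta}^{\mathrm{ML}}_{T_r}$, to every $\bftheta\in\bfTheta$ up to constants. On the compact set $\bfTheta$, with bounded $\bfalpha_j,b_j$, the weights $p_j(1-p_j)$ are bounded above and below uniformly over $j$ and $\bftheta$. So there are constants $0<\kappa_1\le\kappa_2$ with $\kappa_1\Lambda_T(\bftheta')\preceq\Lambda_T(\bftheta)\preceq \kappa_2\Lambda_T(\bftheta')$ for all $\bftheta,\bftheta'\in\bfTheta$. Hence $\lambda_{\min}(\Lambda_{T_r}(\bftheta))\ge c\kappa_1 T_r$ uniformly in $\bftheta$, almost surely, for $r\ge r_0$.

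Next, the strong law for vector martingales with bounded increments gives $\|S_{T}(\bftheta^\ast)\|=O(\sqrt{T\log\log T})$ almost surely. I would rather use the simpler bound $o(T)$, which follows from $\sum_s s^{-2}\mathbb{E}[(Y_s-p_s)^2\mid H_{s-1}]<\infty$ and Kronecker's lemma applied coordinatewise.

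Then expand by Taylor's theorem with integral remainder, using convexity of $\bfTheta$:
\[
\ell_T(\bftheta)-\ell_T(\bftheta^\ast)=(\bftheta-\bftheta^\ast)^\top S_T(\bftheta^\ast)-(\bftheta-\bftheta^\ast)^\top\bar\Lambda_T(\bftheta-\bftheta^\ast),
\]
where $\bar\Lambda_T=\int_0^1(1-u)\Lambda_T(\bftheta^\ast+u(\bftheta-\bftheta^\ast))\,du\succeq \tfrac12 c\kappa_1T\,\mathrm{I}$. The maximizer therefore satisfies $\|\hat{\bftheta}-\bftheta^\ast\|\le \|S_T(\bftheta^\ast)\|/(c\kappa_1 T/2)\to0$ almost surely.

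\textbf{Asymptotic normality.} Consistency and interiority (Assumption \ref{ass:compact}) make $S_T(\hat{\bftheta})=0$ eventually. The mean value form then gives
\[
\sqrt{T}(\hat{\bftheta}-\bftheta^\ast)=\Big(\tfrac1T\tilde\Lambda_T\Big)^{-1}\tfrac{1}{\sqrt T}S_T(\bftheta^\ast),
\]
where $\tilde\Lambda_T$ averages $\Lambda_T$ along the segment. The function $\bftheta\mapsto p(1-p)$ is Lipschitz uniformly over items, so $\|\frac1T\Lambda_T(\bftheta)-\frac1T\Lambda_T(\bftheta')\|\le L\|\bftheta-\bftheta'\|$. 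Combining this with consistency and Assumption \ref{ass:information2} yields both $\frac1T\tilde\Lambda_T\to\mathcal I_\infty$ and $\frac1T\Lambda_T(\bftheta^\ast)\to\mathcal I_\infty$ almost surely.

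The martingale central limit theorem, via Cramér--Wold with a deterministic limit of the normalized conditional variance and a Lindeberg condition that is trivial for bounded increments, gives $T^{-1/2}S_T(\bftheta^\ast)\overset{d}{\to}\mathcal N(0,\mathcal I_\infty)$. Slutsky's lemma then gives $\mathcal N(0,\mathcal I_\infty^{-1})$. The deterministic limit supplied by Assumption \ref{ass:information2} is the key point: without it the limit would be a mixture.

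\textbf{WMSE.} Since $\operatorname{WMSE}_\delta=\mathbb{E}[\|\hat{\bftheta}_I-\bftheta_I^\ast\|^2]+\delta\,\mathbb{E}[\|\hat{\bftheta}_N-\bftheta_N^\ast\|^2]$, convergence in distribution gives the limit $\operatorname{tr}(\mathcal I^{-1}_{\infty,II})+\delta\operatorname{tr}(\mathcal I^{-1}_{\infty,NN})$, which equals $\Phi_{1,\delta}[\mathcal I_\infty^{-1}]$ after relabeling weights. This requires uniform integrability of $T\|\hat{\bftheta}-\bftheta^\ast\|^2$, and I expect this step to be the main obstacle.

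I would attack it from the deterministic inequality above, which holds for all $r\ge r_0$ almost surely:
\[
T\|\hat{\bftheta}-\bftheta^\ast\|^2\le C\,\|T^{-1/2}S_T(\bftheta^\ast)\|^2 .
\]
It then suffices to bound a higher moment, say $\sup_T\mathbb{E}\|T^{-1/2}S_T(\bftheta^\ast)\|^4<\infty$. This follows from Burkholder's inequality because the increments are bounded, and it gives the needed uniform integrability.

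One subtlety is that Assumption \ref{ass:information1} holds only almost surely for large $r$. If the almost-sure statement does not hold uniformly in $\omega$ past a fixed $r_0$, I would split off the event where eigen-growth fails and bound the error on it crudely by $\mathrm{diam}(\bfTheta)^2 T$ times its probability. That would require a rate on this probability, which I would try to obtain under R1 from Assumption \ref{ass:dimension} and a Freedman-type inequality.
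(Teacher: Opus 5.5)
Your argument is correct, but it takes a different route from the paper.

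\textbf{The paper's route.} For R1, the paper verifies the regularity conditions of \citeA{li2025globally} (Lemma~\ref{lemma:lemma2}) and cites their Theorems 4, 5 and 2. For R2, it rebuilds the general M-estimation machinery with item-uniform constants: a martingale strong law (Lemma~\ref{lemma:lemma4}), a bracketing uniform LLN (Lemma~\ref{lemma:lemma5}), and a well-separated maximum obtained from the KL lower bound (Condition~B.15) together with eigen-growth. It then reruns Li et al.'s CLT argument with $\mathcal{I}_\infty$ in place of the limiting design information, which requires the observed-information limits \eqref{e:target3}--\eqref{e:target4}. For the WMSE it defers to ``the same reasoning'' as Li et al.

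\textbf{Your route.} You exploit the logistic structure directly. The Hessian $-\Lambda_T(\bftheta)$ does not depend on the responses. Once eigen-growth is transferred to all of $\bfTheta$ (the comparison in the last statement of Lemma~\ref{lemma:lemma3}), $\ell_T$ is strongly concave on the convex set $\bfTheta$ with modulus of order $T$. The single sample-level inequality $\|\hat{\bftheta}-\bftheta^\ast\|\le 2\|S_T(\bftheta^\ast)\|/(c\kappa_1 T)$ then does double duty:
\begin{itemize}
\item it gives consistency from one martingale strong law, with no ULLN and no KL separation;
\item it supplies the dominating variable for uniform integrability, via a fourth-moment bound on the bounded-increment score martingale.
\end{itemize}
In the CLT step, \eqref{e:target3} becomes trivial and \eqref{e:target4} is unnecessary, because you use the predictable variation $\Lambda_T(\bftheta^\ast)$ directly.

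\textbf{What each buys.} Your route is self-contained and treats R1 and R2 uniformly. It works for any selection rule meeting the stated assumptions, and it makes the WMSE step explicit where the paper only cites. The paper's route, in exchange, sits inside a framework that does not rely on concavity or on a response-free Hessian.

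\textbf{Two smaller points.} First, the subtlety you raise about Assumption~\ref{ass:information1} does not arise as the assumption is stated. Because $r_0$ is deterministic and there are countably many $r$, the eigenvalue bound holds for all $r\ge r_0$ off a single null set. So $T_r\|\hat{\bftheta}_{T_r}-\bftheta^\ast\|^2\le C\|T_r^{-1/2}S_{T_r}(\bftheta^\ast)\|^2$ is already a valid domination. The event-splitting/Freedman fallback is not needed, which is fortunate, since as sketched it would cover only R1. Second, no relabeling of weights is required at the end: $\Phi_{1,\delta}(\Sigma)=(1-\delta)\operatorname{tr}(\Sigma_{II})+\delta\operatorname{tr}(\Sigma)=\operatorname{tr}(\Sigma_{II})+\delta\operatorname{tr}(\Sigma_{NN})$ holds identically.
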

\begin{proof}
See Appendix \ref{appendix:proof}.
\end{proof}

\begin{remark}[Verification under the weighted A-optimal rule]
\label{rem:r1-normality-verification}
Under the item-type reuse regime in Assumption \ref{ass:r1} (R1), when items are selected by the weighted A-optimal rule in \eqref{e:wa-optimal}, Assumptions \ref{ass:compact} and \ref{ass:dimension} imply the requirements of Theorem \ref{thm:normality}. Thus, the assumptions used for the R1 optimality result also give the conditions needed for asymptotic normality of the ML estimator. See Lemma \ref{lemma:lemma2} in Appendix \ref{appendix:proof} for more details.
\end{remark}

\begin{remark}[Variable-length tests]
\label{rem:variable}
A fixed test length $T_r$ is assumed in Theorems~\ref{thm:optimality} and \ref{thm:normality}. In practice, CAT may use variable test lengths to reduce the test length and help reduce test-taker fatigue. That is, the test terminates once the estimation accuracy is sufficient, making the total length a random variable. The asymptotic normality results can be extended to such variable-length tests. Specifically, for a user-defined $\operatorname{MSE}$ threshold $\tau$ and $\delta \in [0,1]$, we define the random test length
\begin{equation*}
T_{\delta, \tau} = \inf \{ t \geq 1: \widehat{\operatorname{WMSE}}_{\delta}(\hat{\bftheta}_t) \leq \tau \},
\end{equation*}
where the estimated $\widehat{\operatorname{WMSE}}$ at step $t$ is given as 
\begin{equation*}
\widehat{\operatorname{WMSE}}_{\delta}(\hat{\bftheta}_t) = \frac{1}{t} \Big\{ \operatorname{tr} \Big( (\hat{\mathcal{I}}_t^{-1})_{II} \Big) + \delta \operatorname{tr}\Big( (\hat{\mathcal{I}}_t^{-1})_{NN} \Big) \Big\}.
\end{equation*}
Here, $(\hat{\mathcal I}_t^{-1})_{II}$ and $(\hat{\mathcal I}_t^{-1})_{NN}$ denote the block submatrices induced by the decomposition in \eqref{e:decomp}. Such variable-length tests can be analyzed by combining the results in the current study with the proof of Theorem 10 in \citeA{li2025globally}.
\end{remark}

\begin{remark}[Valid statistical inference]
The asymptotic normality statement in Theorem \ref{thm:normality} provides a rigorous justification for constructing confidence intervals and conducting hypothesis tests based on the asymptotic distribution of the ML estimator. In practice, the asymptotic covariance matrix $\mathcal{I}_{\infty}^{-1}$ is replaced by its finite-sample counterpart $\hat{\mathcal{I}}_T^{-1}$, where $\hat{\mathcal{I}}_T$ denotes the scaled cumulative Fisher information at test length $T$. The same asymptotic normal approximation remains valid with this plug-in estimator under the stated conditions.
This asymptotic normality result is not specific to the A-optimal rule. It also holds for other item-selection procedures, provided that the ML estimator is employed.
\end{remark}

\section{Simulation Study}
\label{sec:simulation}
In this section, we use simulation studies to examine the theoretical results in Section \ref{sec:theory} under Assumptions \ref{ass:r1} (R1) and \ref{ass:r2} (R2). The results in Section \ref{sec:theory} are asymptotic and rely on a large test length $T$. To assess how the large-$T$ theory behaves in practically relevant finite-test settings, we consider test lengths $T=30$ and $T=100$ with a two-dimensional ability vector $\boldsymbol{\theta}^{\ast} \in \mathbb{R}^2$. We investigate finite-sample behavior related to the first and third statements of Theorem \ref{thm:optimality}, and we also examine selection stabilization under Assumption \ref{ass:r2} (R2). We first describe the simulation design used throughout this section.

\subsection{Simulation Design} \label{sub:design}
We follow the nine cases in \citeA{mulder2009multidimensional}, where the true ability parameter is $\boldsymbol{\theta}^{\ast} = (\theta_1^{\ast}, \theta_2^{\ast})$ with $\theta_i^{\ast} \in  \{-1, 0, 1\}$ for $i = 1,2$. We treat $\theta_1^{\ast}$ as the intentional ability and $\theta_2^{\ast}$ as the nuisance ability. Under the M2PL model with $d = 2$, each item $j$ has two discrimination parameters, $\alpha_{1,j}$ and $\alpha_{2,j}$, generated independently from a folded normal distribution obtained by taking the absolute value of an $N(1,0.3)$ random variable, and a single difficulty parameter $b_j$ generated from $N(0,3)$. 
Under Assumption \ref{ass:r1} (R1), we fix the test length to be either $T = 30$ or $T = 100$, with the item pool as a multiset 
\begin{equation*} 
\mathcal{J} = \{\,\underbrace{1,\ldots,1}_{100 \text{ times}},\
\underbrace{2,\ldots,2}_{100 \text{ times}},\ \ldots,\
\underbrace{30,\ldots,30}_{100 \text{ times}}\,\}.
\end{equation*}
Under Assumption \ref{ass:r2} (R2), we fix $T = 30$ or $T = 100$ and take the item pool to be 
\begin{equation*} 
\mathcal{J} = \{1,2,\dots,200\}.  
\end{equation*} 
For notational simplicity, we omit the subscript $r$ in the item set specified in Assumptions \ref{ass:r1} and \ref{ass:r2}. All reported simulation results are based on $500$ adaptive test administrations.

\subsection{Selection Stabilization} \label{sub:stabilization}
In this subsection, we investigate stabilization of the item-selection process under Assumptions \ref{ass:r1} (R1) and \ref{ass:r2} (R2), and relate it to the first statement of Theorem \ref{thm:optimality}. Under Assumption \ref{ass:r1} (R1), Theorem \ref{thm:optimality} states that the empirical frequency vector of selected item types converges to an optimal limiting frequency. This means that, as the test length $T$ becomes large, the selection frequency for each item type stabilizes. We examine this result under finite test lengths. Because Theorem~\ref{thm:optimality} is established under R1 and is not directly applicable to R2, we use an additional metric to study stabilization under R2.

Under Assumption \ref{ass:r1} (R1), for each item type $j \in \mathcal{J}$ (i.e., $j \in \{1,\ldots,30\}$) and test length $T$, we compute the empirical selection frequency
\begin{equation*}
\frac{1}{T} \sum_{s=1}^{T} \mathbf{1}_{(j = j_s)},
\end{equation*}
where $j_s$ denotes the index of the selected item at step $s$. By tracking these empirical frequencies as $T$ increases, we assess whether the selection proportions for each item type begin to stabilize. To check this, we set a longer test length $T = 100$ for a single adaptive test administration and compare the empirical frequencies evaluated at $T = 30$ and $T = 100$. Since our interest is whether stabilization starts near $T = 30$, we examine how close these two frequencies are. Due to page limits, we only report the case under $\bftheta^{\ast} = (-1,0)$ and $\delta = 1$ in this subsection and provide the results for other cases in Appendix \ref{appendix:simulation}. The empirical selection frequencies of item types under Assumption \ref{ass:r1} (R1) are shown in Figure \ref{fig:reuse1}.

From Figure \ref{fig:reuse1}, only $5$ out of $30$ item types (item types $5$, $12$, $26$, $28$, and $29$) are selected. Comparing the empirical selection frequencies at $T = 30$ and $T = 100$, the frequencies for item types $5$, $12$, $28$, and $29$ are close to their longer-horizon behavior values at $T = 100$, while the frequency for item type $26$ continues to decrease as $T$ increases. Thus, the selection frequencies are not fully stabilized at $T = 30$, but most of the frequencies are already close to their long-run pattern.

Under Assumption \ref{ass:r2} (R2), each operational item has its own calibrated item-parameter values, so empirical frequencies of individual item indices are not directly comparable to the repeated-type frequencies in R1. Instead, we consider stabilization in terms of discrimination parameters $\alpha_{1,j}$ and $\alpha_{2,j}$ and the difficulty parameter $b_j$ for each item $j$. The idea is that, even without repeated item types, the selected items may concentrate in regions of the item-parameter space. To check this, we fix several cutoff values $c$, and for each test length $T$, we compute the empirical distributions 
\begin{equation} \label{e:emp_dist1}
\frac{1}{T} \sum_{t=1}^{T} \mathbf{1}_{(\alpha_{i, j_t} \leq c)}, \quad i = 1,2
\end{equation} 
for the discrimination parameters, and 
\begin{equation} \label{e:emp_dist2}
\frac{1}{T} \sum_{t=1}^{T} \mathbf{1}_{(b_{j_t} \leq c)},
\end{equation} 
for the difficulty parameter. These quantities describe how the empirical distribution of selected items over the parameter space evolves with $T$. By tracking them for several cutoff values $c$ as $T$ increases, we can evaluate whether the selection stabilizes in terms of discrimination and difficulty. We fix $T = 100$ for a single adaptive test administration and compare the empirical distributions at $T = 30$ and $T = 100$. Figures \ref{fig:prop1} and \ref{fig:prop2} display the results for two discrimination parameters using cutoffs $(0.5, 0.7, 1, 1.5, 2, 2.2)$ (from bottom to top), and Figure \ref{fig:prop3} shows the results for the difficulty parameter using cutoffs $(-2, -1.5, -1, 0, 1, 1.5, 2)$.

Figures \ref{fig:prop1}--\ref{fig:prop3} show that at $T = 100$ the empirical distributions are stable for all cutoffs. Moreover, for most cutoff values, the empirical proportions at $T = 30$ are already close to those at $T = 100$ for all three parameters. In our simulations, this suggests that under Assumption \ref{ass:r2} (R2), the selection effectively concentrates on regions of the item pool with similar discrimination and difficulty values, and that selection behavior at $T = 30$ is already close to the longer-test pattern.

\subsection{Comparison of $\mathrm{WMSE}_{\delta}$} \label{sub:compare}
In this comparison, we evaluate $\widehat{\operatorname{WMSE}}_{\delta}$ across different selection criteria. By the third statement of Theorem \ref{thm:optimality}, under Assumption \ref{ass:r1} (R1), the weighted A-optimal rule is asymptotically optimal for the corresponding weighted MSE criterion. In the simulations, we investigate how weighted A-optimality behaves relative to other selection criteria under both Assumption \ref{ass:r1} (R1) and Assumption \ref{ass:r2} (R2) at the practical finite test length $T = 30$. For each assumption and fixed test length $T$, we compute the estimated mean squared error for the intentional and nuisance abilities as follows.
\begin{equation} \label{e:hat mse}
\widehat{\operatorname{MSE}}_I(\hat{\theta}_{1,T}^{\mathrm{ML}}) = \frac{1}{500} \sum_{k = 1}^{500} (\hat{\theta}_{1,T}^{\mathrm{ML}, k} - \theta_1^{\ast})^2,
\quad
\widehat{\operatorname{MSE}}_N(\hat{\theta}_{2,T}^{\mathrm{ML}}) = \frac{1}{500} \sum_{k = 1}^{500} (\hat{\theta}_{2,T}^{\mathrm{ML},k} - \theta_2^{\ast})^2,
\end{equation}
where $\hat{\theta}_{1,T}^{\mathrm{ML},k}$ and $\hat{\theta}_{2,T}^{\mathrm{ML},k}$ represent the ML estimators from the $k$-th adaptive test 
administration using the test data of length $T$. Then, for $\delta \in (0,1]$ and test length $T$, the estimated weighted MSE is computed as
\begin{equation} \label{e:hat wmse}
\widehat{\operatorname{WMSE}}_{\delta}(\hat{\bftheta}_{T}^{\mathrm{ML}}) = \widehat{\operatorname{MSE}}_I(\hat{\theta}_{1,T}^{\mathrm{ML}}) + \delta \cdot \widehat{\operatorname{MSE}}_N(\hat{\theta}_{2,T}^{\mathrm{ML}}).
\end{equation}
We report the results for A-optimality (A), D-optimality (D), random selection (R), weighted A-optimality with $\delta = 0.1$ ($\mathrm{WA}_{0.1}$), weighted A-optimality with $\delta= 0.001$ ($\mathrm{WA}_{0.001}$), and $\mathrm{D}_s$-optimality ($\mathrm{D}_s$). Tables \ref{t:r1 mse} and \ref{t:r1 aggre} present the results under Assumption \ref{ass:r1}, and Tables \ref{t:r2 mse} and \ref{t:r2 aggre} present the results under Assumption \ref{ass:r2}.

We observe a very similar pattern across the selection criteria under both Assumption \ref{ass:r1} (R1) and Assumption \ref{ass:r2} (R2). First, all information-based selection criteria perform better than random selection. For a fixed value of $\delta$, Theorem \ref{thm:optimality} shows that, under Assumption \ref{ass:r1} (R1), the weighted A-optimal rule with that $\delta$ (i.e., $\mathrm{WA}_{\delta}$) is asymptotically optimal for $\operatorname{WMSE}_{\delta}$. This theoretical optimality is reflected in the finite-sample simulations: $\widehat{\mathrm{WMSE}}_1$, $\widehat{\mathrm{WMSE}}_{0.1}$, and $\widehat{\mathrm{WMSE}}_{0.001}$ are minimized by A-optimality, $\mathrm{WA}_{0.1}$, and $\mathrm{WA}_{0.001}$, respectively. This suggests that, in the reported settings, the asymptotic optimality result is informative for the finite test length $T = 30$. Also, decreasing $\delta$ toward $0$ is effective when the main goal is accurate estimation of the intentional ability. Specifically, under both Assumption \ref{ass:r1} and Assumption \ref{ass:r2}, $\widehat{\mathrm{MSE}}_{I}(\hat{\boldsymbol{\theta}}_{1,30})$ for $\mathrm{WA}_{0.001}$ is similar to that for $\mathrm{D}_s$-optimality.

\section{Discussion and Future Work} \label{sec:discuss} 
This paper provides a theoretical and empirical investigation of Fisher--information--based item-selection rules for MCAT in the presence of intentional and nuisance abilities. Our main theoretical contribution is an asymptotic optimality result for the weighted A-optimal item-selection rule under the item-type reuse regime (R1). When paired with the ML estimator, this rule attains the minimal asymptotic weighted mean squared error over a large class of adaptive item-selection procedures. A second contribution is an asymptotic normality result for the ML estimator under general adaptive item-selection rules that satisfy mild regularity conditions, covering both the item-type reuse regime (R1) and the unique-item-parameter regime (R2). This result justifies the use of standard information-based confidence intervals and hypothesis tests in MCAT, including settings where operational items are adaptively selected without repetition.

The simulation study complements the asymptotic theory by examining finite-sample behavior under a two-dimensional M2PL model. Under R1, the empirical selection frequencies of item types show much of the predicted long-run pattern at the finite test length $T=30$. Under R2, where each operational item has its own calibrated item-parameter values, we instead study stabilization in terms of the distribution of discrimination and difficulty parameters among selected items. The empirical distributions of these parameters also stabilize by $T=30$, suggesting that the adaptive design concentrates on regions of the item pool that are most informative for the examinee's ability. In terms of estimation accuracy, weighted A-optimality yields smaller $\widehat{\mathrm{WMSE}}_\delta$ than the competing item-selection rules. When $\delta$ is small, the weighted A-optimal rule improves intentional-ability accuracy with modest losses in nuisance-ability accuracy, providing a flexible tool for prioritizing intentional dimensions in practice.

Despite these strengths, our analysis has some limitations. The asymptotic optimality result is established under the item-type reuse regime (R1). In many MCAT applications, the operational item pool is closer to the unique-item-parameter regime (R2), where each calibrated item has its own item-parameter values rather than belonging to a repeated type. Although our simulations indicate that the R1 optimality result remains informative, it remains an open question whether an exact optimality theory can be developed under R2.

Several future directions remain. One direction is to relax the modeling assumptions, for example by allowing calibration error, model misspecification, or more general multidimensional response models, and to investigate the robustness of the proposed item-selection rule under these settings. Finally, our asymptotic normality and $\widehat{\mathrm{WMSE}}_\delta$ results are derived for fixed ability dimension $d$. Extending the theory to higher-dimensional or increasing-$d$ settings would make the framework more relevant for modern assessment applications involving large-scale data, hierarchical skill structures, and many latent traits.

\vspace{\fill}\pagebreak
\bibliographystyle{apacite}
\bibliography{reference}
\clearpage

\begin{table}[ht!] 
\centering
\caption{MSE for ability parameters under Assumption \ref{ass:r1} (R1)}
\label{t:r1 mse}
\begin{threeparttable}
\small
\begin{adjustbox}{max width=\textwidth}
\begin{tabular}{cc *{12}{S[table-format=1.3]}}
\toprule
\multicolumn{2}{c}{\makecell{\textbf{Parameters}\\$\bftheta$}} &
\multicolumn{6}{c}{${\operatorname{MSE}}_I(\hat{\bftheta}_{1,30})$} &
\multicolumn{6}{c}{${\operatorname{MSE}}_N(\hat{\bftheta}_{2,30})$}\\
\cmidrule(lr){1-2}\cmidrule(lr){3-8}\cmidrule(lr){9-14}
{$\theta_1$} & {$\theta_2$} & {A} & {D} & {R} & {$\mathrm{WA}_{0.1}$} & {$\mathrm{WA}_{0.001}$} & {$\mathrm{D}_s$} & {A} & {D} & {R} & {$\mathrm{WA}_{0.1}$} & {$\mathrm{WA}_{0.001}$} & {$\mathrm{D}_s$}\\
\midrule
1  &  1   & 0.4989 & 0.5783 & 1.3595 & 0.4649 & 0.4415 & 0.4399 & 0.4895 & 0.5748 & 1.2789 & 0.6565 & 0.8182 & 0.8142\\
1  &  0   & 0.2257 & 0.1970 & 0.8630 & 0.2021 & 0.2103 & 0.2104 & 0.2132 & 0.2397 & 0.8835 & 0.2631 & 0.3046 & 0.3079\\
1  & -1   & 0.1405 & 0.1375 & 0.7975 & 0.1281 & 0.1369 & 0.1370 & 0.1414 & 0.1753 & 0.8740 & 0.1668 & 0.2101 & 0.2101\\
0  &  1   & 0.2392 & 0.2503 & 0.8618 & 0.2155 & 0.2147 & 0.2160 & 0.3069 & 0.3953 & 0.9131 & 0.3908 & 0.4729 & 0.4757\\
0  &  0   & 0.1871 & 0.1780 & 0.8989 & 0.1440 & 0.1303 & 0.1307 & 0.1810 & 0.2198 & 0.9625 & 0.2041 & 0.2332 & 0.2342\\
0  & -1   & 0.1425 & 0.1677 & 0.9800 & 0.1288 & 0.1288 & 0.1285 & 0.1067 & 0.1267 & 1.0562 & 0.1848 & 0.3207 & 0.3190\\
-1 &  1   & 0.1990 & 0.1871 & 0.8252 & 0.1537 & 0.1435 & 0.1479 & 0.2890 & 0.2991 & 0.8558 & 0.3600 & 0.4480 & 0.5066\\
-1 &  0   & 0.2107 & 0.2397 & 0.9121 & 0.1723 & 0.1555 & 0.1562 & 0.2093 & 0.2122 & 0.9604 & 0.3415 & 0.4846 & 0.4995\\
-1 & -1   & 0.2228 & 0.2935 & 1.0515 & 0.1747 & 0.1651 & 0.1663 & 0.1454 & 0.1438 & 1.1662 & 0.2627 & 0.5201 & 0.5299\\
\midrule
\multicolumn{2}{l}{\textbf{Average}} &
0.2296 & 0.2477 & 0.9499 & 0.1982 & 0.1918 & 0.1925 &
0.2314 & 0.2652 & 0.9945 & 0.3145 & 0.4236 & 0.4330 \\
\bottomrule
\end{tabular}
\end{adjustbox}
\end{threeparttable}
\end{table}

\begin{table}[ht!] 
\centering
\caption{Weighted MSE for ability parameters under Assumption \ref{ass:r1} (R1)}
\label{t:r1 aggre}
\begin{threeparttable}
\small
\begin{adjustbox}{max width=\textwidth}
\begin{tabular}{cc *{18}{S[table-format=1.3]}}
\toprule
\multicolumn{2}{c}{\makecell{\textbf{Parameters}\\$\bftheta$}} &
\multicolumn{6}{c}
{${\operatorname{WMSE}}_{1}$} &
\multicolumn{6}{c}
{${\operatorname{WMSE}}_{0.1}$} &
\multicolumn{6}{c}
{${\operatorname{WMSE}}_{0.001}$}\\
\cmidrule(lr){1-2}\cmidrule(lr){3-8}\cmidrule(lr){9-14}\cmidrule(lr){15-20}
{$\theta_1$} & {$\theta_2$} &
{A} & {D} & {R} & {$\mathrm{WA}_{0.1}$} & {$\mathrm{WA}_{0.001}$} & {$\mathrm{D}_s$} &
{A} & {D} & {R} & {$\mathrm{WA}_{0.1}$} & {$\mathrm{WA}_{0.001}$} & {$\mathrm{D}_s$} &
{A} & {D} & {R} & {$\mathrm{WA}_{0.1}$} & {$\mathrm{WA}_{0.001}$} & {$\mathrm{D}_s$}\\
\midrule
1  &  1   & 0.9884 & 1.1531 & 2.6384 & 1.1214 & 1.2597 & 1.2541 & 0.5478 & 0.6358 & 1.4874 & 0.5305 & 0.5233 & 0.5213 & 0.4994 & 0.5789 & 1.3608 & 0.4656 & 0.4423 & 0.4407 \\
1  &  0   & 0.4389 & 0.4367 & 1.7465 & 0.4652 & 0.5149 & 0.5183 & 0.2470 & 0.2210 & 0.9513 & 0.2284 & 0.2408 & 0.2412 & 0.2259 & 0.1972 & 0.8639 & 0.2024 & 0.2106 & 0.2107 \\
1  & -1   & 0.2819 & 0.3128 & 1.6715 & 0.2949 & 0.3470 & 0.3471 & 0.1546 & 0.1550 & 0.8849 & 0.1448 & 0.1579 & 0.1579 & 0.1406 & 0.1377 & 0.7984 & 0.1282 & 0.1371 & 0.1372 \\
0  &  1   & 0.5461 & 0.6456 & 1.7749 & 0.6063 & 0.6876 & 0.6917 & 0.2699 & 0.2898 & 0.9531 & 0.2546 & 0.2620 & 0.2636 & 0.2395 & 0.2507 & 0.8627 & 0.2159 & 0.2152 & 0.2165 \\
0  &  0   & 0.3681 & 0.3978 & 1.8614 & 0.3481 & 0.3635 & 0.3649 & 0.2052 & 0.2000 & 0.9951 & 0.1644 & 0.1536 & 0.1541 & 0.1873 & 0.1782 & 0.8999 & 0.1442 & 0.1305 & 0.1309 \\
0  & -1   & 0.2492 & 0.2944 & 2.0362 & 0.3136 & 0.4495 & 0.4475 & 0.1532 & 0.1804 & 1.0856 & 0.1473 & 0.1609 & 0.1604 & 0.1426 & 0.1678 & 0.9811 & 0.1290 & 0.1291 & 0.1288 \\
-1 &  1   & 0.4880 & 0.4862 & 1.6810 & 0.5137 & 0.5915 & 0.6545 & 0.2279 & 0.2170 & 0.9108 & 0.1897 & 0.1883 & 0.1985 & 0.1993 & 0.1874 & 0.8261 & 0.1541 & 0.1440 & 0.1484 \\
-1 &  0   & 0.4200 & 0.4519 & 1.8725 & 0.5138 & 0.6401 & 0.6557 & 0.2316 & 0.2609 & 1.0081 & 0.2065 & 0.2039 & 0.2061 & 0.2109 & 0.2399 & 0.9131 & 0.1726 & 0.1560 & 0.1567 \\
-1 & -1   & 0.3682 & 0.4373 & 2.2177 & 0.4374 & 0.6852 & 0.6962 & 0.2373 & 0.3079 & 1.1681 & 0.2010 & 0.2171 & 0.2189 & 0.2230 & 0.2936 & 1.0527 & 0.1750 & 0.1656 & 0.1668 \\
\midrule
\multicolumn{2}{l}{\textbf{Average}} & 0.4610 & 0.5129 & 1.9445 & 0.5127 & 0.6154 & 0.6256 & 0.2527 & 0.2742 & 1.0494 & 0.2297 & 0.2342 & 0.2358 & 0.2298 & 0.2479 & 0.9509 & 0.1985 & 0.1923 & 0.1930 \\
\bottomrule
\end{tabular}
\end{adjustbox}
\end{threeparttable}
\end{table}

\begin{table}[ht!] 
\centering
\caption{MSE for ability parameters under Assumption \ref{ass:r2} (R2)}
\label{t:r2 mse}
\begin{threeparttable}
\small
\begin{adjustbox}{max width=\textwidth}
\begin{tabular}{cc *{12}{S[table-format=1.3]}}
\toprule
\multicolumn{2}{c}{\makecell{\textbf{Parameters}\\$\bftheta$}} &
\multicolumn{6}{c}{${\operatorname{MSE}}_I(\hat{\bftheta}_{1,30})$} &
\multicolumn{6}{c}{${\operatorname{MSE}}_N(\hat{\bftheta}_{2,30})$}\\
\cmidrule(lr){1-2}\cmidrule(lr){3-8}\cmidrule(lr){9-14}
{$\theta_1$} & {$\theta_2$} & {A} & {D} & {R} & {$\mathrm{WA}_{0.1}$} & {$\mathrm{WA}_{0.001}$} & {$\mathrm{D}_s$} & {A} & {D} & {R} & {$\mathrm{WA}_{0.1}$} & {$\mathrm{WA}_{0.001}$} & {$\mathrm{D}_s$}\\
\midrule
1  &  1   & 0.1742 & 0.2023 & 0.7604 & 0.1548 & 0.1507 & 0.1506 & 0.2235 & 0.2429 & 0.7158 & 0.2769 & 0.3538 & 0.3544\\
1  &  0   & 0.1747 & 0.2017 & 0.5851 & 0.1455 & 0.1507 & 0.1484 & 0.1410 & 0.1531 & 0.4580 & 0.1731 & 0.2139 & 0.2193\\
1  & -1   & 0.1375 & 0.1592 & 0.5627 & 0.1285 & 0.1231 & 0.1251 & 0.1180 & 0.1287 & 0.4932 & 0.1524 & 0.1840 & 0.1937\\
0  &  1   & 0.1544 & 0.1794 & 0.6073 & 0.1355 & 0.1277 & 0.1279 & 0.1433 & 0.1600 & 0.5023 & 0.1921 & 0.2317 & 0.2335\\
0  &  0   & 0.1404 & 0.1686 & 0.5101 & 0.1263 & 0.1323 & 0.1330 & 0.1448 & 0.1750 & 0.4349 & 0.1897 & 0.2355 & 0.2506\\
0  & -1   & 0.1414 & 0.1564 & 0.5968 & 0.1288 & 0.1288 & 0.1283 & 0.1346 & 0.1514 & 0.5374 & 0.1848 & 0.2034 & 0.2175\\
-1 &  1   & 0.1785 & 0.1931 & 0.6113 & 0.1595 & 0.1573 & 0.1559 & 0.1653 & 0.1785 & 0.5053 & 0.2163 & 0.2468 & 0.2514\\
-1 &  0   & 0.1660 & 0.1737 & 0.5843 & 0.1564 & 0.1463 & 0.1520 & 0.1408 & 0.1408 & 0.4383 & 0.1923 & 0.2227 & 0.2334\\
-1 & -1   & 0.1933 & 0.2130 & 0.7425 & 0.1740 & 0.1716 & 0.1712 & 0.1717 & 0.1804 & 0.6878 & 0.2105 & 0.2712 & 0.2912\\
\midrule
\multicolumn{2}{l}{\textbf{Average}} &
0.1623 & 0.1830 & 0.6178 & 0.1455 & 0.1432 & 0.1436 &
0.1537 & 0.1679 & 0.5303 & 0.1987 & 0.2403 & 0.2494 \\
\bottomrule
\end{tabular}
\end{adjustbox}
\end{threeparttable}
\end{table}

\begin{table}[ht!] 
\centering
\caption{Weighted MSE for ability parameters under Assumption \ref{ass:r2} (R2)}
\label{t:r2 aggre}
\begin{threeparttable}
\small
\begin{adjustbox}{max width=\textwidth}
\begin{tabular}{cc *{18}{S[table-format=1.3]}}
\toprule
\multicolumn{2}{c}{\makecell{\textbf{Parameters}\\$\bftheta$}} &
\multicolumn{6}{c}{${\operatorname{WMSE}}_{1}$} &
\multicolumn{6}{c}{${\operatorname{WMSE}}_{0.1}$} &
\multicolumn{6}{c}{${\operatorname{WMSE}}_{0.001}$} \\
\cmidrule(lr){1-2}\cmidrule(lr){3-8}\cmidrule(lr){9-14}\cmidrule(lr){15-20}
{$\theta_1$} & {$\theta_2$} &
{A} & {D} & {R} & {$\mathrm{WA}_{0.1}$} & {$\mathrm{WA}_{0.001}$} & {$\mathrm{D}_s$} &
{A} & {D} & {R} & {$\mathrm{WA}_{0.1}$} & {$\mathrm{WA}_{0.001}$} & {$\mathrm{D}_s$} &
{A} & {D} & {R} & {$\mathrm{WA}_{0.1}$} & {$\mathrm{WA}_{0.001}$} & {$\mathrm{D}_s$} \\
\midrule
1  &  1   & 0.3977 & 0.4452 & 1.4762 & 0.4317 & 0.5045 & 0.5050 & 0.1966 & 0.2266 & 0.8320 & 0.1825 & 0.1861 & 0.1860 & 0.1744 & 0.2025 & 0.7611 & 0.1551 & 0.1511 & 0.1510 \\
1  &  0   & 0.3157 & 0.3548 & 1.0431 & 0.3186 & 0.3646 & 0.3677 & 0.1888 & 0.2170 & 0.6309 & 0.1628 & 0.1721 & 0.1703 & 0.1748 & 0.2019 & 0.5856 & 0.1457 & 0.1509 & 0.1486 \\
1  & -1   & 0.2555 & 0.2879 & 1.0559 & 0.2809 & 0.3071 & 0.3188 & 0.1493 & 0.1721 & 0.6120 & 0.1437 & 0.1415 & 0.1445 & 0.1376 & 0.1593 & 0.5632 & 0.1286 & 0.1233 & 0.1253 \\
0  &  1   & 0.2977 & 0.3394 & 1.1096 & 0.3276 & 0.3594 & 0.3614 & 0.1687 & 0.1954 & 0.6575 & 0.1637 & 0.1509 & 0.1513 & 0.1545 & 0.1796 & 0.6078 & 0.1357 & 0.1279 & 0.1281 \\
0  &  0   & 0.2852 & 0.3436 & 0.9450 & 0.3157 & 0.3678 & 0.3836 & 0.1549 & 0.1861 & 0.5536 & 0.1453 & 0.1559 & 0.1581 & 0.1405 & 0.1688 & 0.5106 & 0.1265 & 0.1325 & 0.1333 \\
0  & -1   & 0.2760 & 0.3078 & 1.1342 & 0.3132 & 0.3322 & 0.3458 & 0.1549 & 0.1715 & 0.6505 & 0.1529 & 0.1492 & 0.1501 & 0.1415 & 0.1566 & 0.5973 & 0.1346 & 0.1290 & 0.1285 \\
-1 &  1   & 0.3438 & 0.3716 & 1.1166 & 0.3758 & 0.4041 & 0.4073 & 0.1950 & 0.2110 & 0.6618 & 0.1814 & 0.1820 & 0.1810 & 0.1787 & 0.1933 & 0.6118 & 0.1597 & 0.1576 & 0.1561 \\
-1 &  0   & 0.3068 & 0.3145 & 1.0226 & 0.3487 & 0.3690 & 0.3854 & 0.1801 & 0.1878 & 0.6281 & 0.1756 & 0.1685 & 0.1753 & 0.1661 & 0.1738 & 0.5848 & 0.1566 & 0.1465 & 0.1522 \\
-1 & -1   & 0.3650 & 0.3934 & 1.4303 & 0.3845 & 0.4428 & 0.4624 & 0.2105 & 0.2310 & 0.8113 & 0.1950 & 0.1987 & 0.2003 & 0.1935 & 0.2132 & 0.7432 & 0.1742 & 0.1719 & 0.1715 \\
\midrule
\multicolumn{2}{l}{\textbf{Average}} &
0.3162 & 0.3512 & 1.1437 & 0.3441 & 0.3835 & 0.3930 &
0.1771 & 0.1998 & 0.6709 & 0.1670 & 0.1672 & 0.1686 &
0.1625 & 0.1832 & 0.6183 & 0.1457 & 0.1434 & 0.1439 \\
\bottomrule
\end{tabular}
\end{adjustbox}
\end{threeparttable}
\end{table}

\begin{figure}[ht!]
    \centering
    \includegraphics[width=0.8\textwidth]{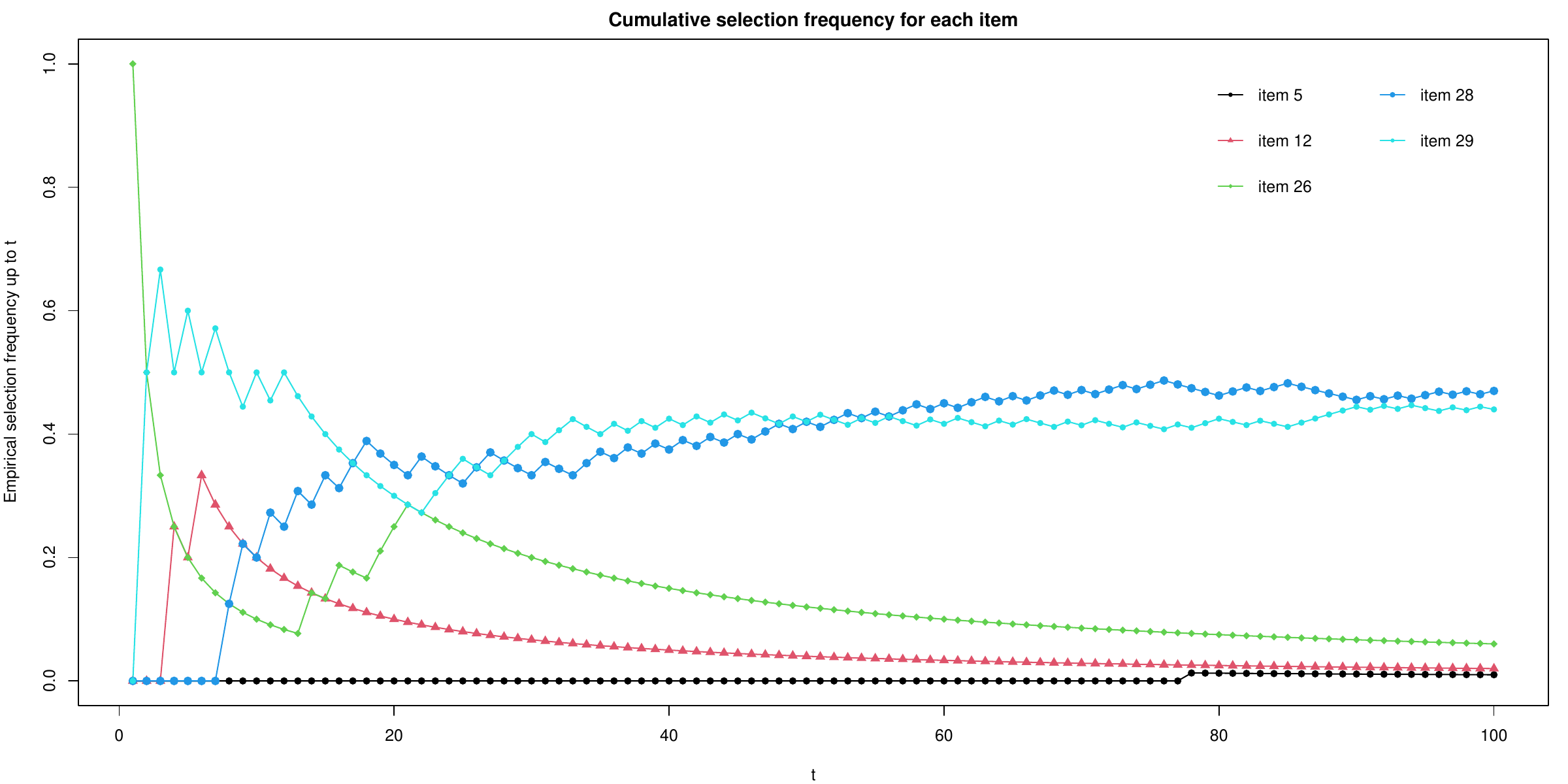}
    \caption{Empirical ratios of the selected items for $\boldsymbol{\theta}^{\ast} = (-1,0)$.}
    \label{fig:reuse1}
\end{figure}

\begin{figure}[ht!]
    \centering
    \includegraphics[width=0.8\textwidth]{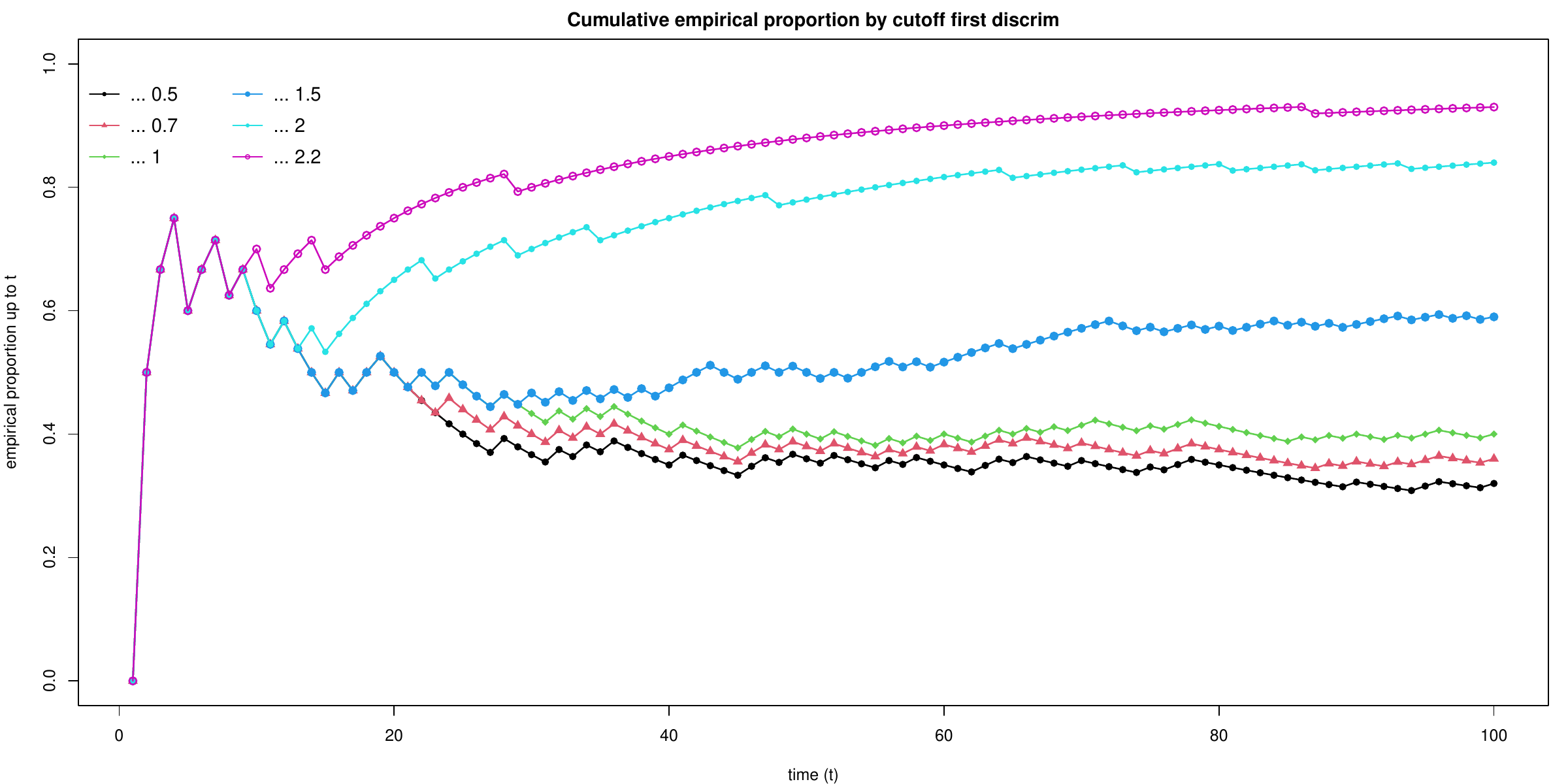}
    \caption{Empirical ratios of the first discrimination parameter $\alpha_1$, determined by cutoffs $(0.5,0.7,1,1.5,2,2.2)$ (from bottom to top) for $\boldsymbol{\theta}^{\ast} = (-1,0)$.}
    \label{fig:prop1}
\end{figure}

\begin{figure}[ht!]
    \centering
    \includegraphics[width=0.8\textwidth]{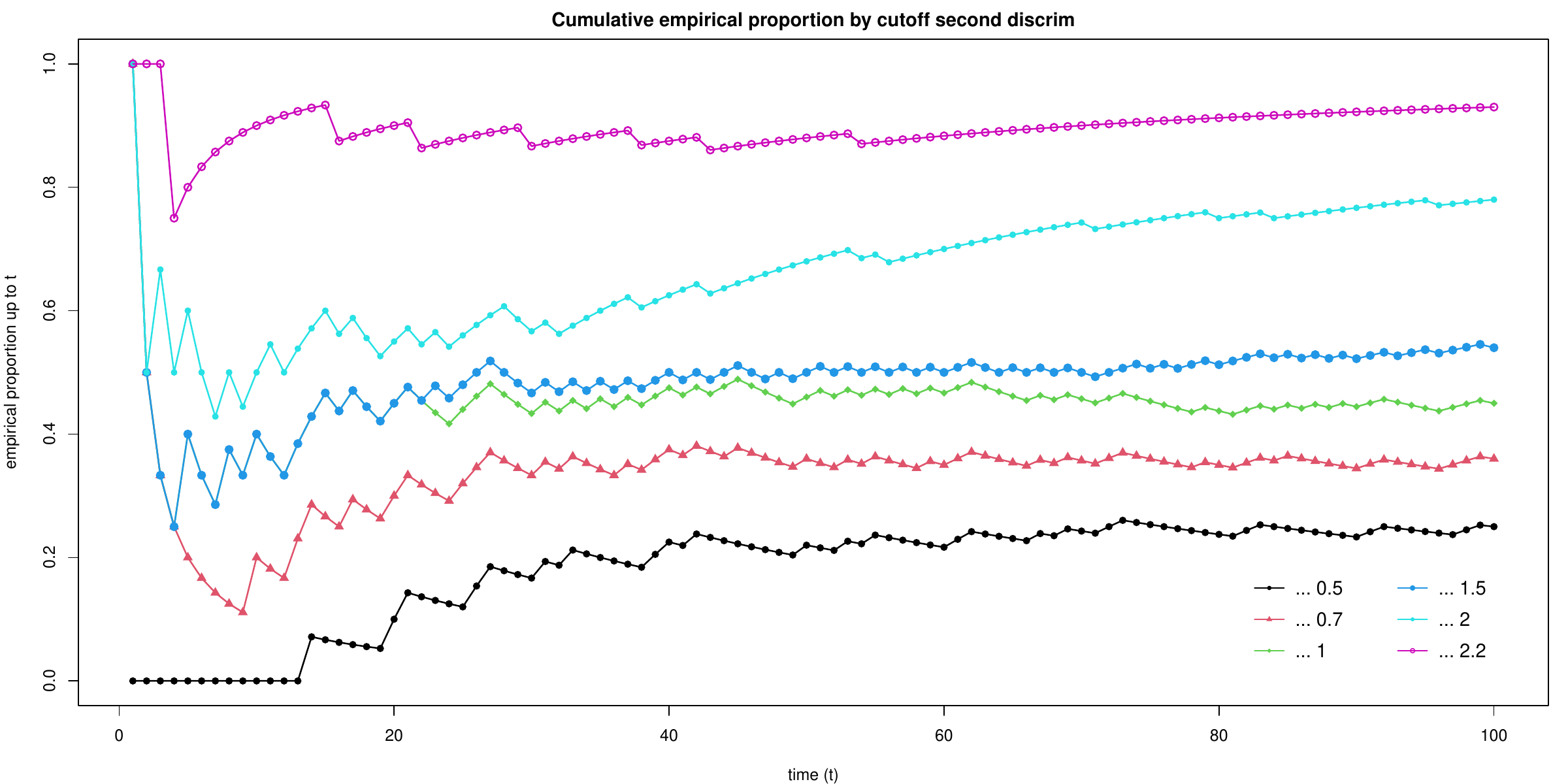}
    \caption{Empirical ratios of the second discrimination parameter $\alpha_2$, determined by cutoffs $(0.5,0.7,1,1.5,2,2.2)$ (from bottom to top) for $\boldsymbol{\theta}^{\ast} = (-1,0)$.}
    \label{fig:prop2}
\end{figure}

\begin{figure}[ht!]
    \centering
    \includegraphics[width=0.8\textwidth]{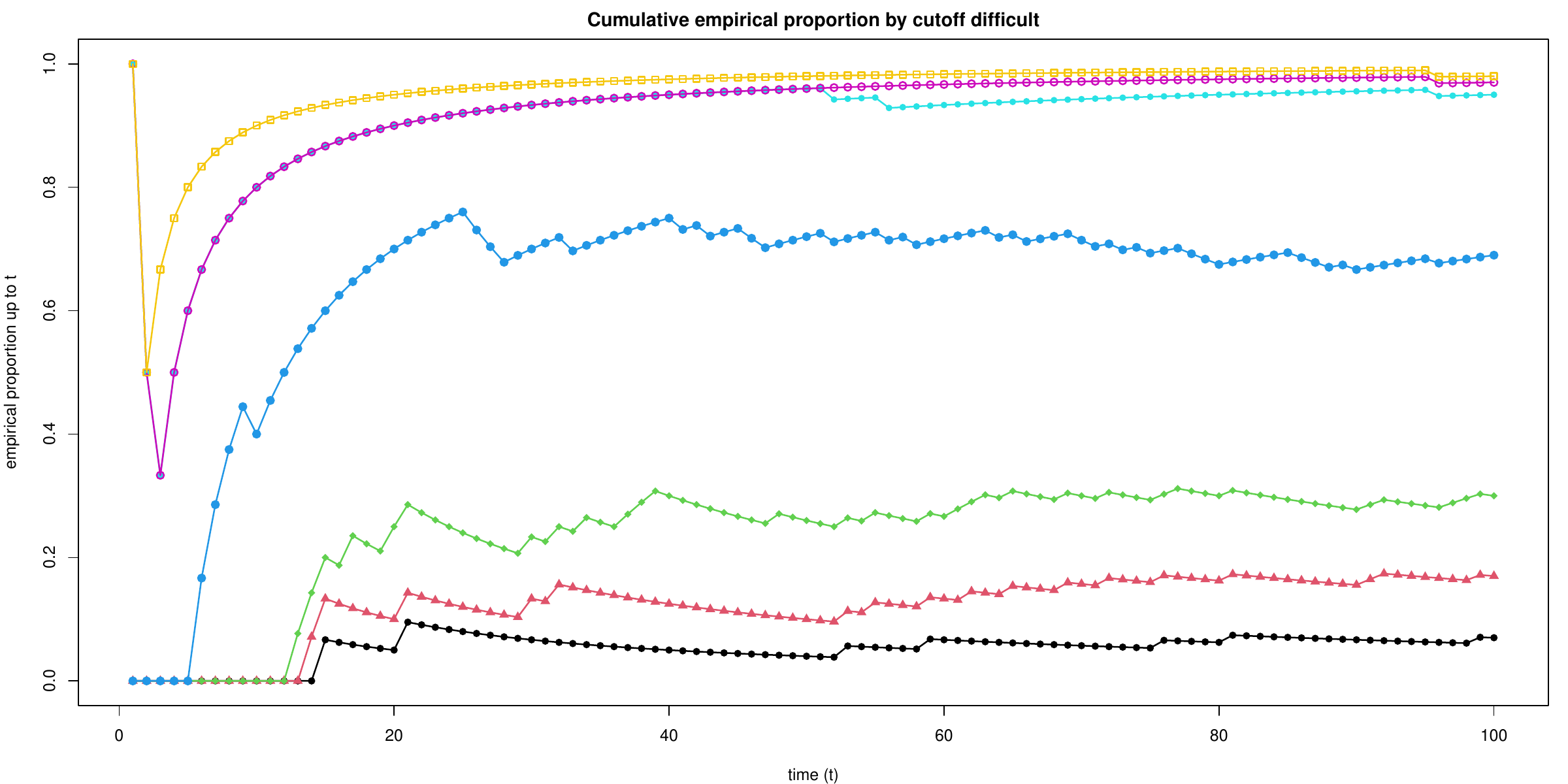}
    \caption{Empirical ratios of the difficulty parameter $b$, determined by cutoffs $(-2,-1.5,-1,0,1,1.5,2)$ (from bottom to top) for $\boldsymbol{\theta}^{\ast} = (-1,0)$.}
    \label{fig:prop3}
\end{figure}

\clearpage
\appendix
\setcounter{figure}{0}
\renewcommand{\thefigure}{A\arabic{figure}}
\renewcommand{\theequation}{\thesection\arabic{equation}}
\renewcommand{\theHfigure}{appendix.\thesection.\arabic{figure}}
\renewcommand{\theHequation}{appendix.\thesection.\arabic{equation}}
\setcounter{equation}{0}
\renewcommand{\thesection}{\Alph{section}}
\setcounter{section}{0}
\refstepcounter{section}
\section*{Appendix \thesection} \label{appendix:simulation}
In this appendix, we present additional simulation results that complement Subsection \ref{sub:stabilization}. We report the case with $\delta = 1$, and similar behavior was observed for $\delta = 0.1$ and $\delta = 0.001$. The overall patterns across the eight remaining cases, namely all settings except $\bftheta^{\ast} = (-1,0)$, are consistent with those reported in Section \ref{sec:simulation}. Figures \ref{fig:1_1_reuse} -- \ref{fig:m1_m1_reuse} display the empirical selection ratios of item types under Assumption \ref{ass:r1}. As in Section \ref{sec:simulation}, the item-selection behavior stabilizes around $T =30$. Also, only a small subset of item types is selected repeatedly across different choices of $\bftheta^{\ast}$, indicating that relatively few item types are informative for estimating examinee abilities.

Figures \ref{fig:1_1_single} -- \ref{fig:m1_m1_single} report the empirical proportions of the two discrimination parameters $(\alpha_{1,j}, \alpha_{2,j})$ and the difficulty parameter  $b_j$ under Assumption \ref{ass:r2} (R2). We use the same cutoff values for discrimination and difficulty parameters as in Section \ref{sec:simulation}. For all choices of $\bftheta^{\ast}$, the selection patterns stabilize around a test length of $30$, which is the behavior observed in the $\bftheta^{\ast} = (-1,0)$ case. These results further support the robustness of the empirical selection stabilization under both item-pool regimes.

\begin{figure}[ht!]
    \centering
    \includegraphics[width=0.8\textwidth]{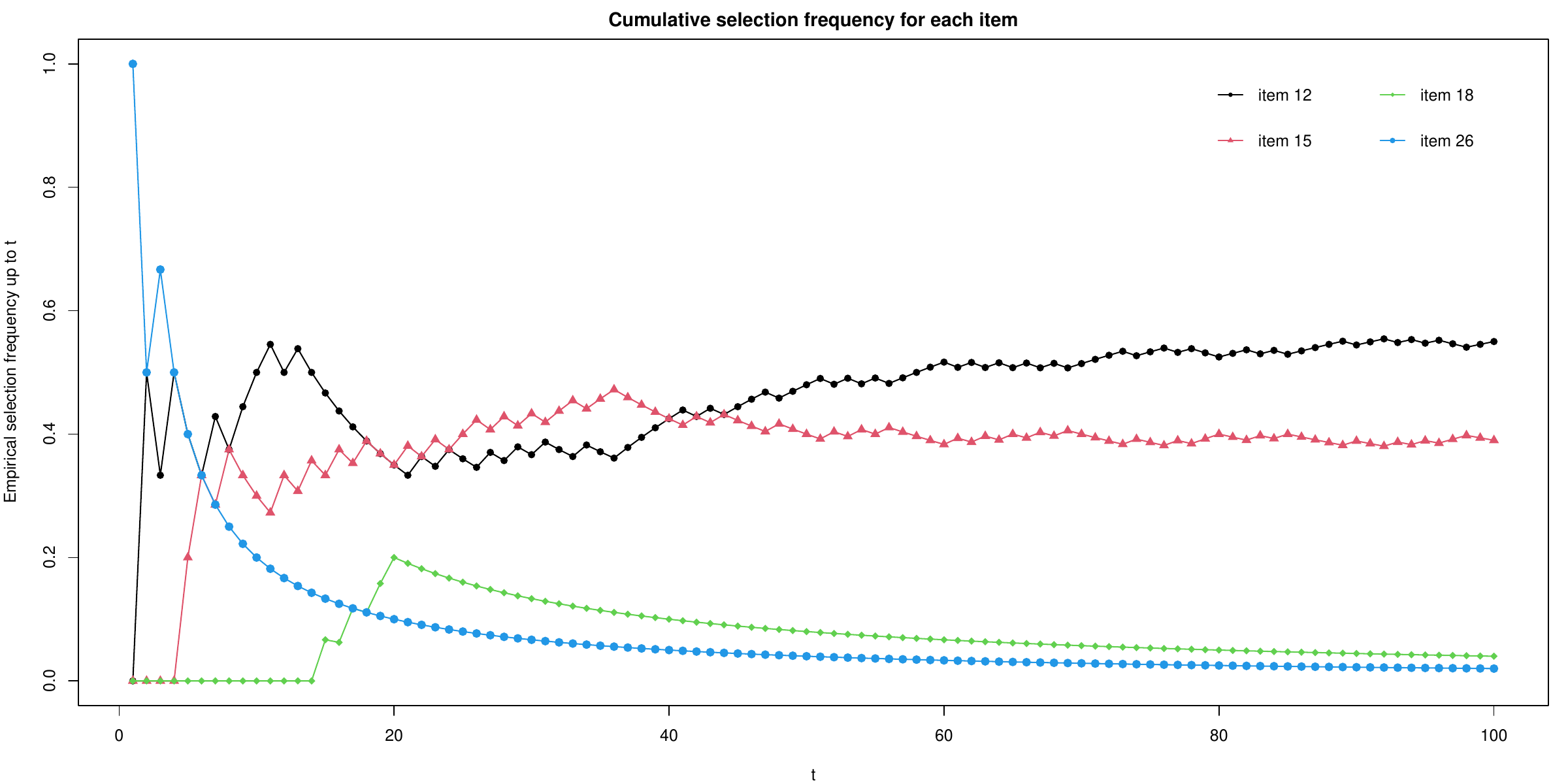}
    \caption{Empirical ratios of the selected item types for $\boldsymbol{\theta}^{\ast} = (1,1)$.}
    \label{fig:1_1_reuse}
\end{figure}

\begin{figure}[ht!]
    \centering
    \includegraphics[width=0.8\textwidth]{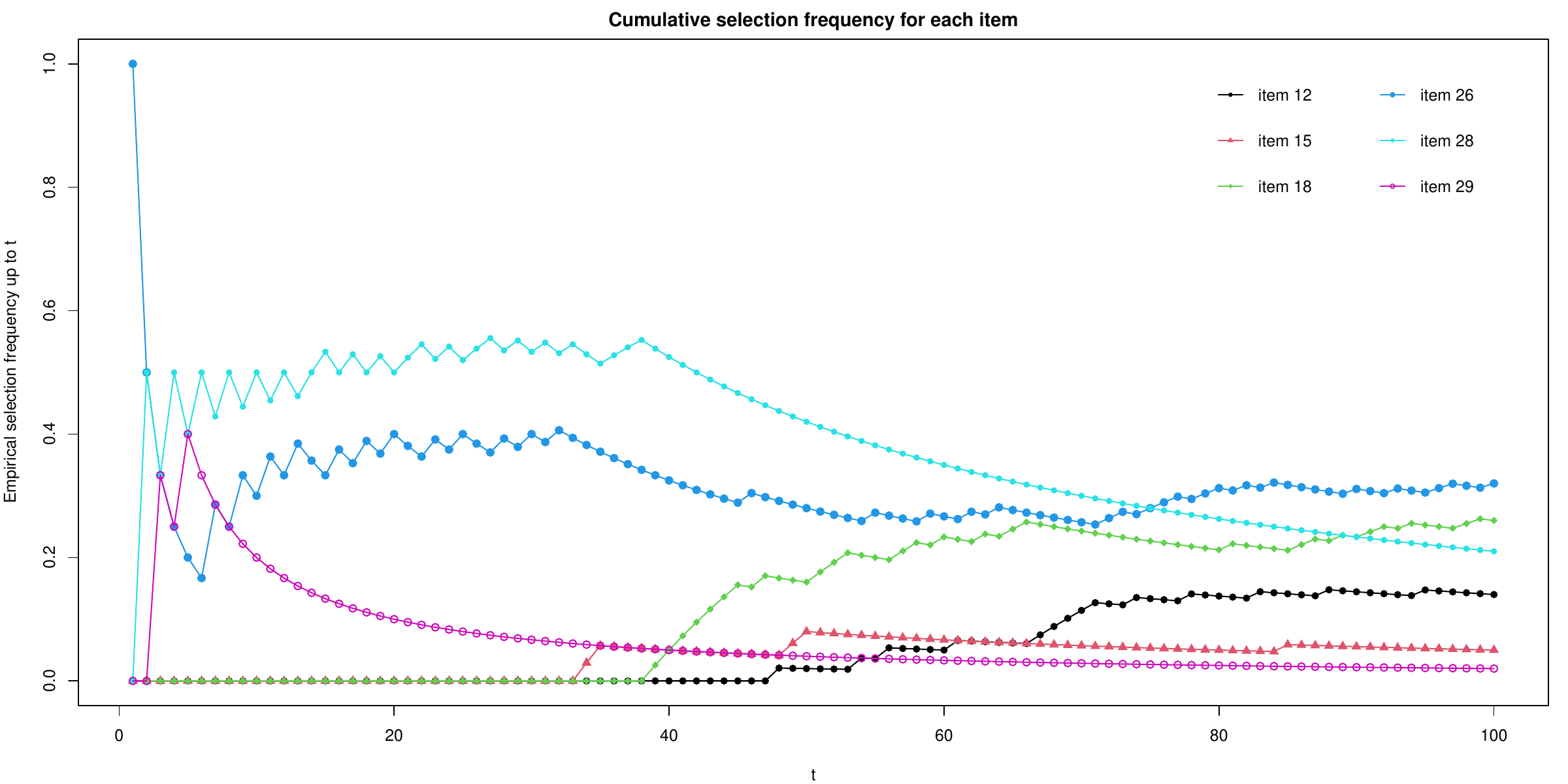}
    \caption{Empirical ratios of the selected item types for $\boldsymbol{\theta}^{\ast} = (1,0)$.}
    \label{fig:1_0_reuse}
\end{figure}

\begin{figure}[ht!]
    \centering
    \includegraphics[width=0.8\textwidth]{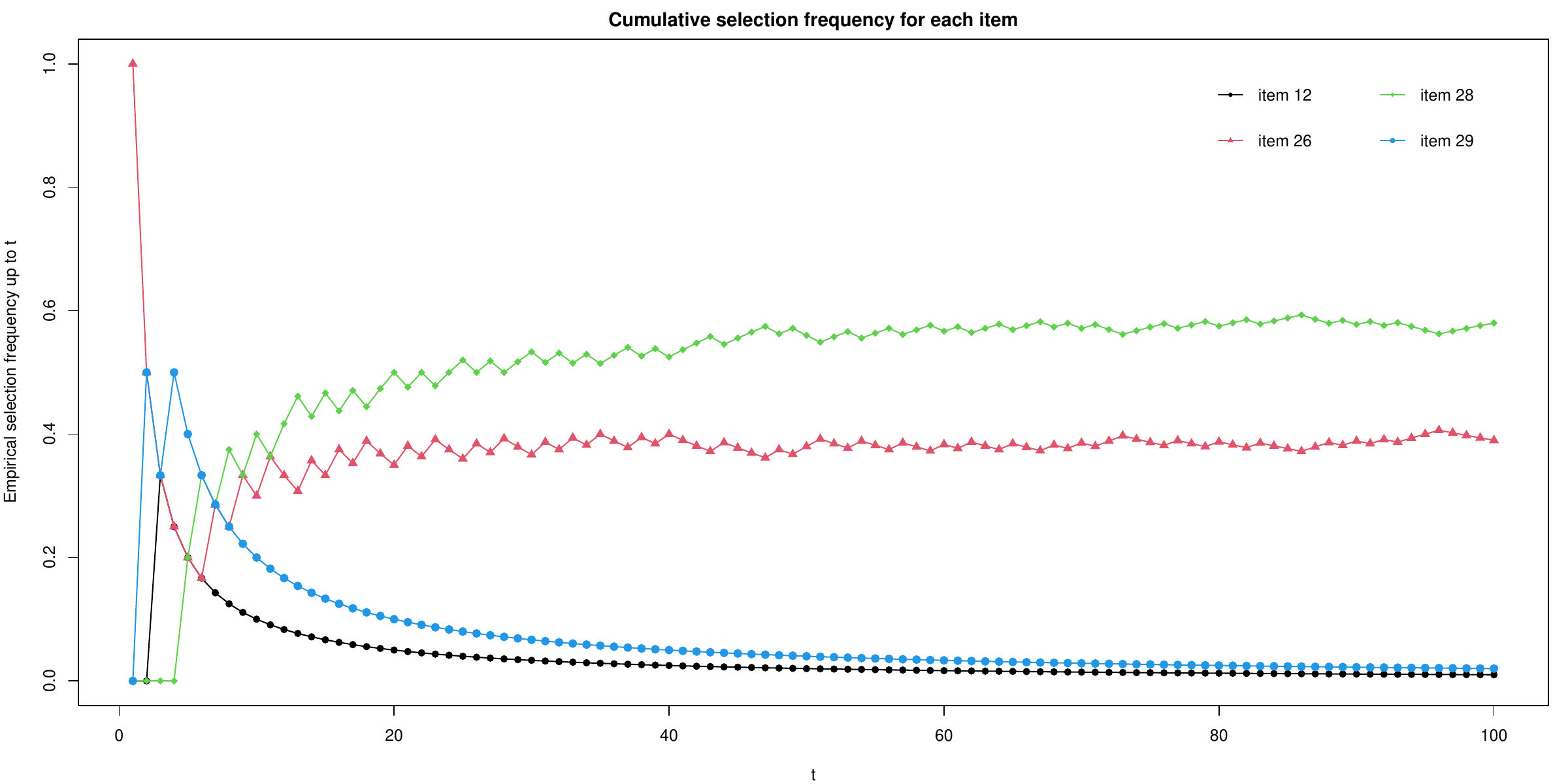}
    \caption{Empirical ratios of the selected item types for $\boldsymbol{\theta}^{\ast} = (1,-1)$.}
    \label{fig:1_m1_reuse}
\end{figure}

\begin{figure}[ht!]
    \centering
    \includegraphics[width=0.8\textwidth]{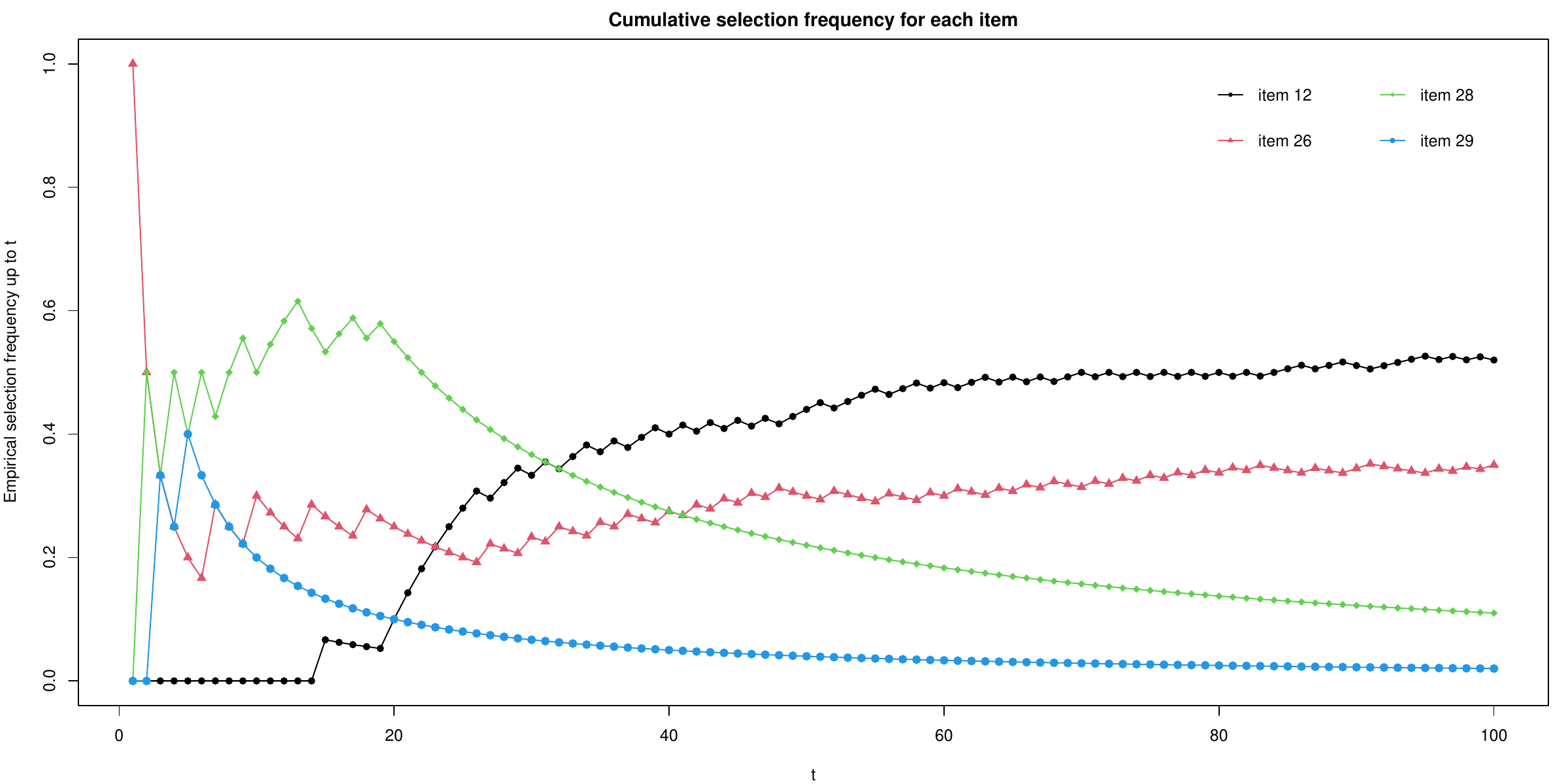}
    \caption{Empirical ratios of the selected item types for $\boldsymbol{\theta}^{\ast} = (0,1)$.}
    \label{fig:0_1_resuse}
\end{figure}

\begin{figure}[ht!]
    \centering
    \includegraphics[width=0.8\textwidth]{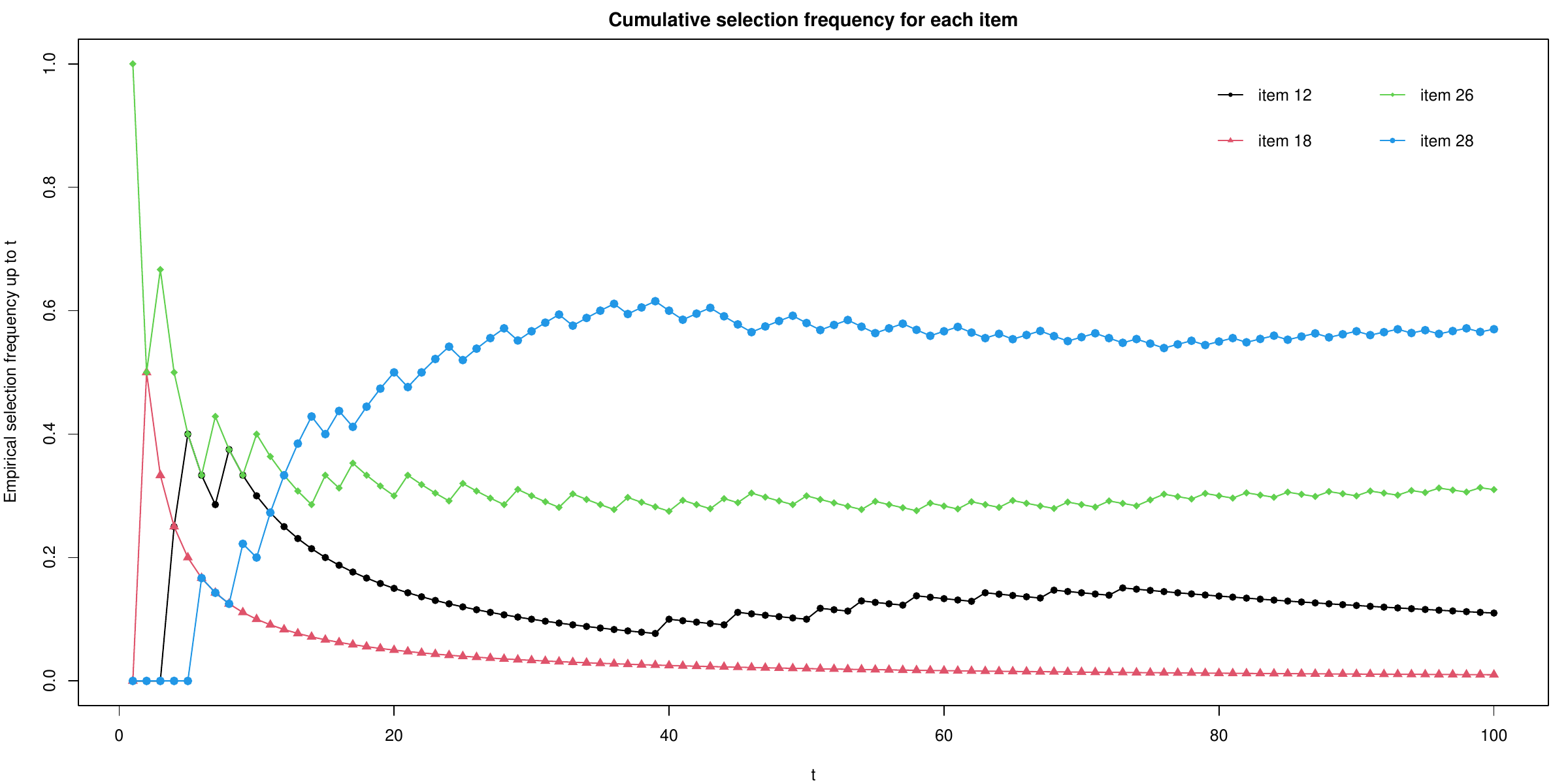}
    \caption{Empirical ratios of the selected item types for $\boldsymbol{\theta}^{\ast} = (0,0)$.}
    \label{fig:0_0_reuse}
\end{figure}

\begin{figure}[ht!]
    \centering
    \includegraphics[width=0.8\textwidth]{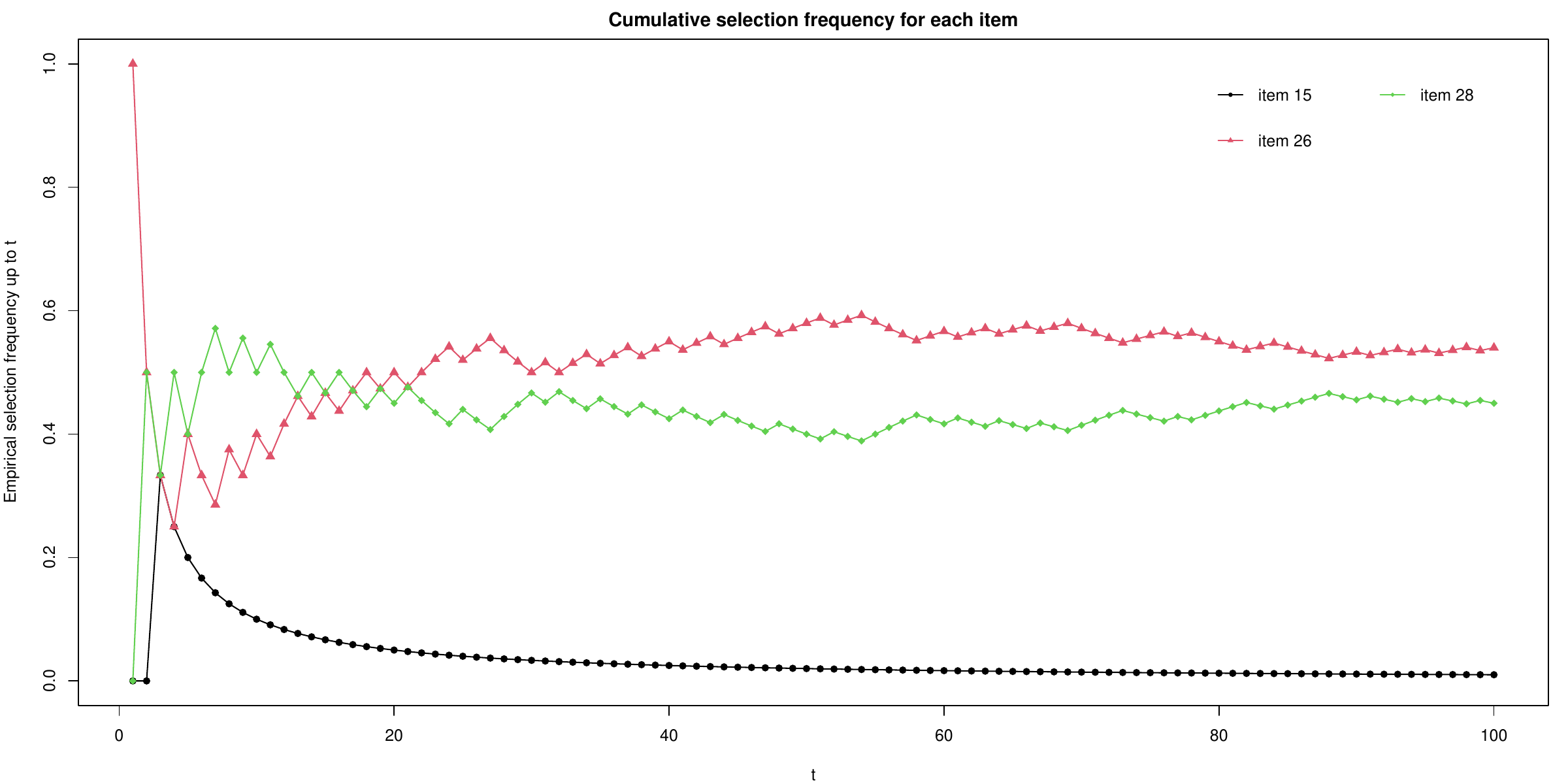}
    \caption{Empirical ratios of the selected item types for $\boldsymbol{\theta}^{\ast} = (0,-1)$.}
    \label{fig:0_m1_reuse}
\end{figure}

\begin{figure}[ht!]
    \centering
    \includegraphics[width=0.8\textwidth]{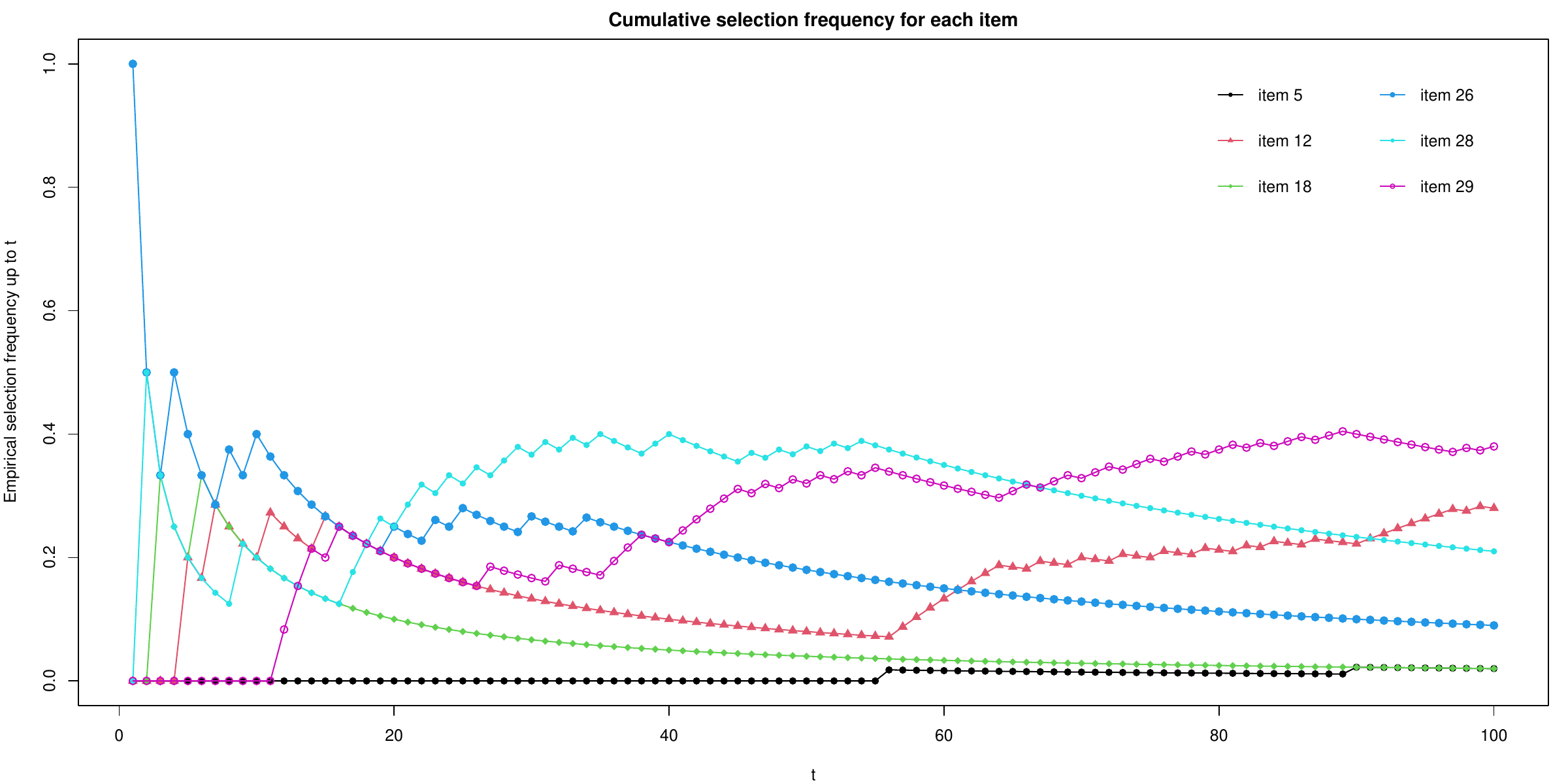}
    \caption{Empirical ratios of the selected item types for $\boldsymbol{\theta}^{\ast} = (-1,1)$.}
    \label{fig:m1_1_reuse}
\end{figure}

\begin{figure}[ht!]
    \centering
    \includegraphics[width=0.8\textwidth]{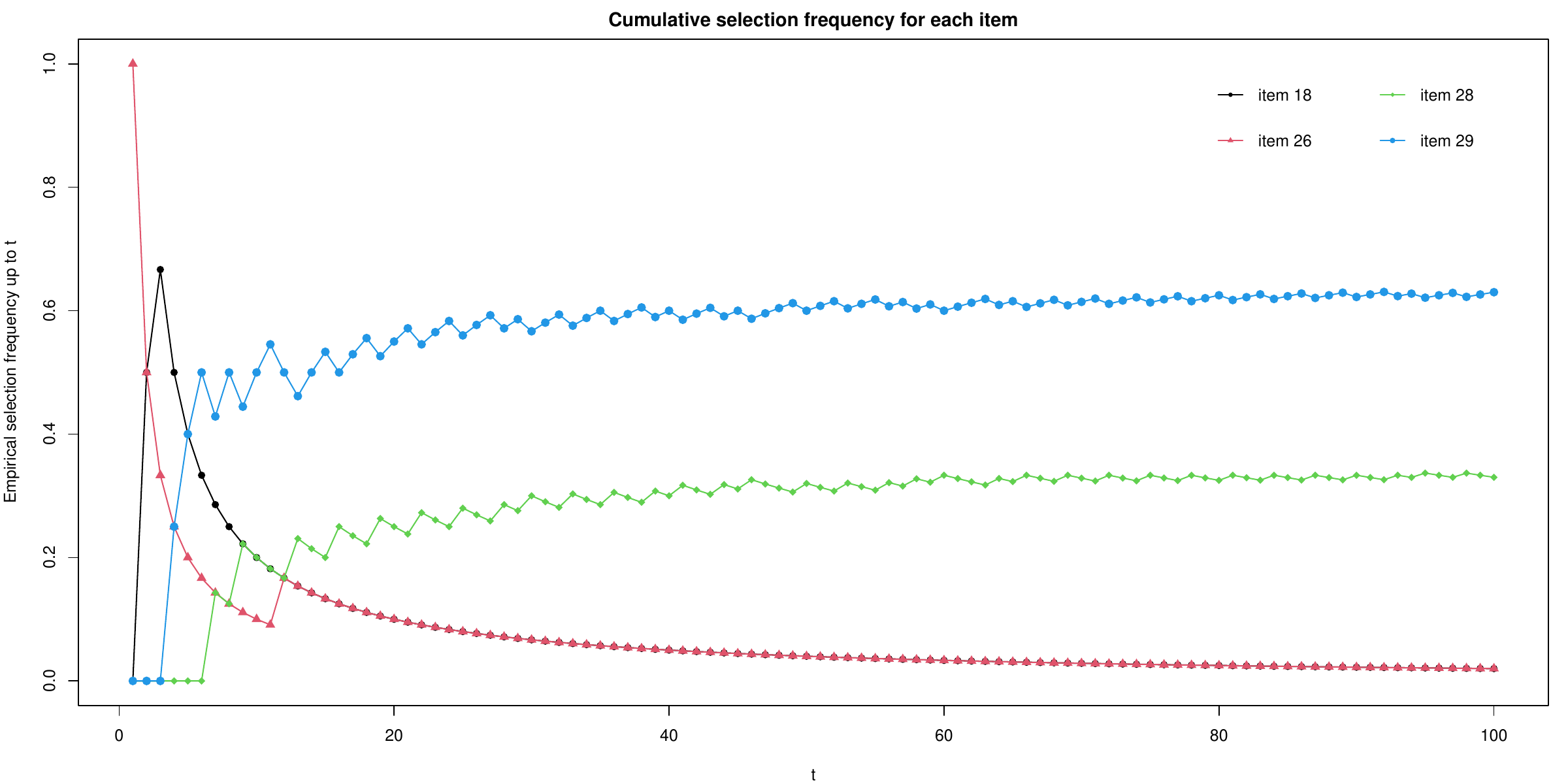}
    \caption{Empirical ratios of the selected item types for $\boldsymbol{\theta}^{\ast} = (-1,-1)$.}
    \label{fig:m1_m1_reuse}
\end{figure}

\begin{figure}[ht!]
    \centering
    \includegraphics[width=0.8\textwidth]{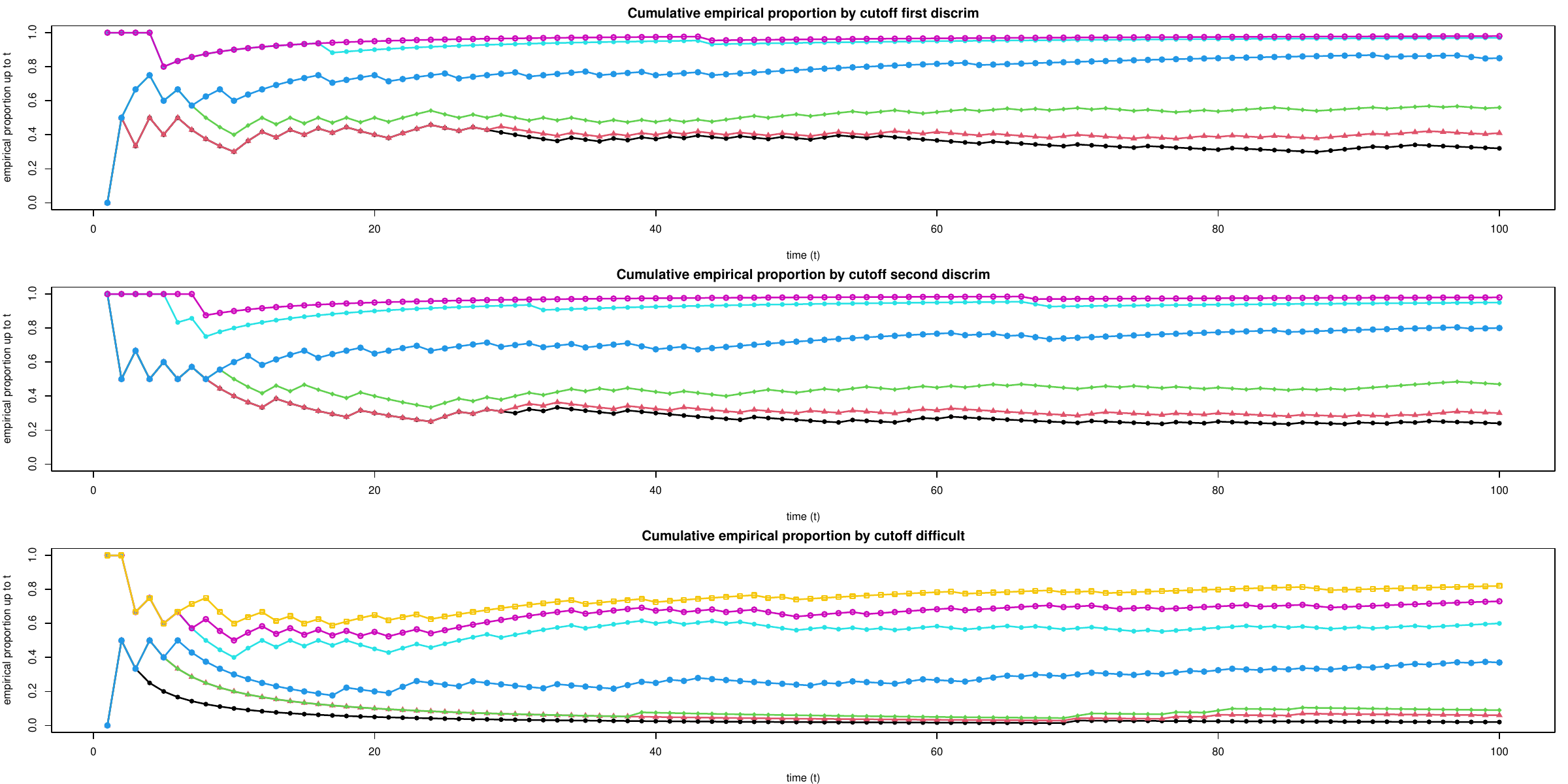}
    \caption{Empirical ratios of parameters $(\alpha_1, \alpha_2, b)$ under $\boldsymbol{\theta}^{\ast} = (1,1)$.}
    \label{fig:1_1_single}
\end{figure}

\begin{figure}[ht!]
    \centering
    \includegraphics[width=0.8\textwidth]{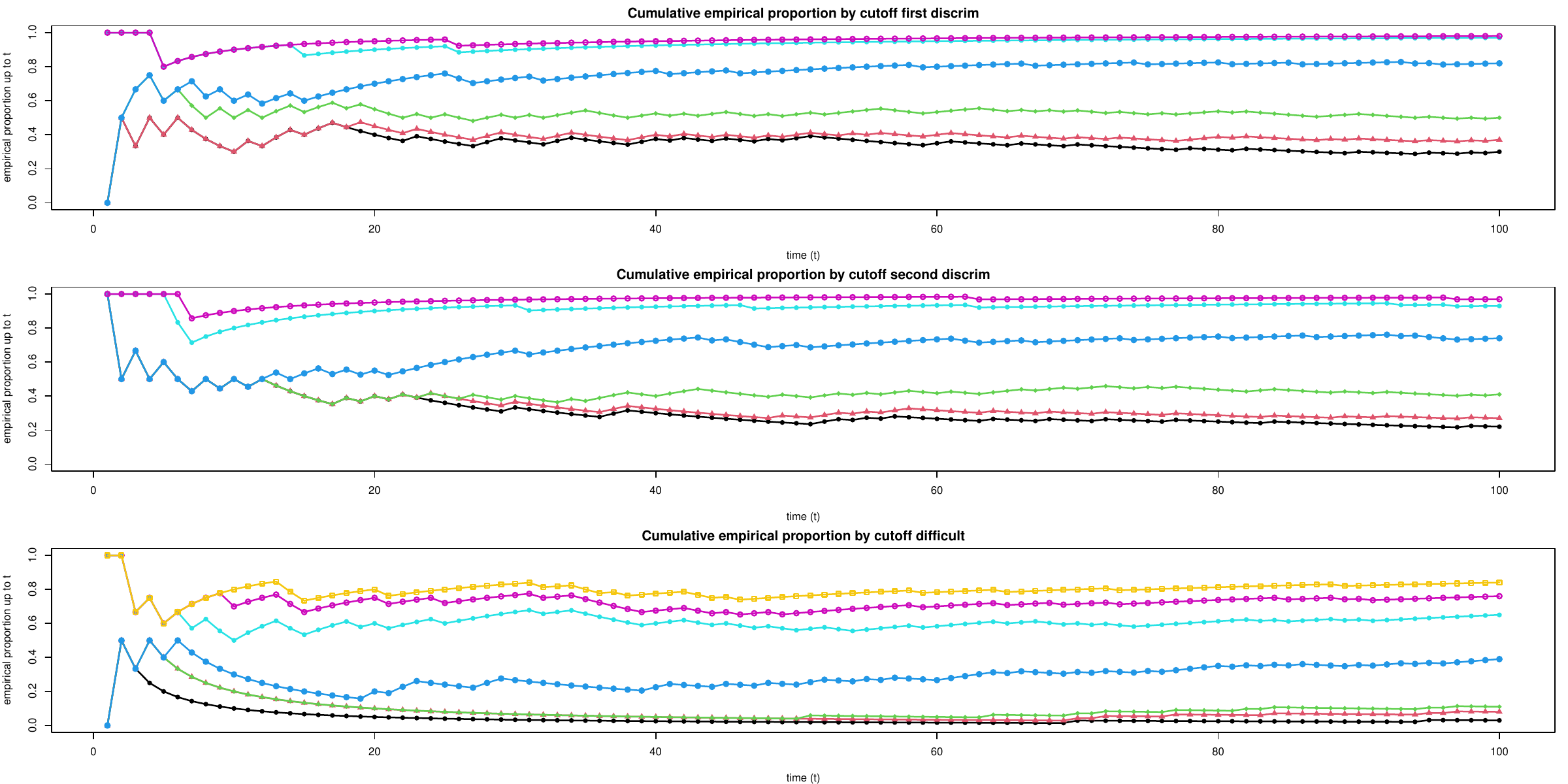}
    \caption{Empirical ratios of parameters $(\alpha_1, \alpha_2, b)$ under $\boldsymbol{\theta}^{\ast} = (1,0)$.}
    \label{fig:1_0_single}
\end{figure}

\begin{figure}[ht!]
    \centering
    \includegraphics[width=0.8\textwidth]{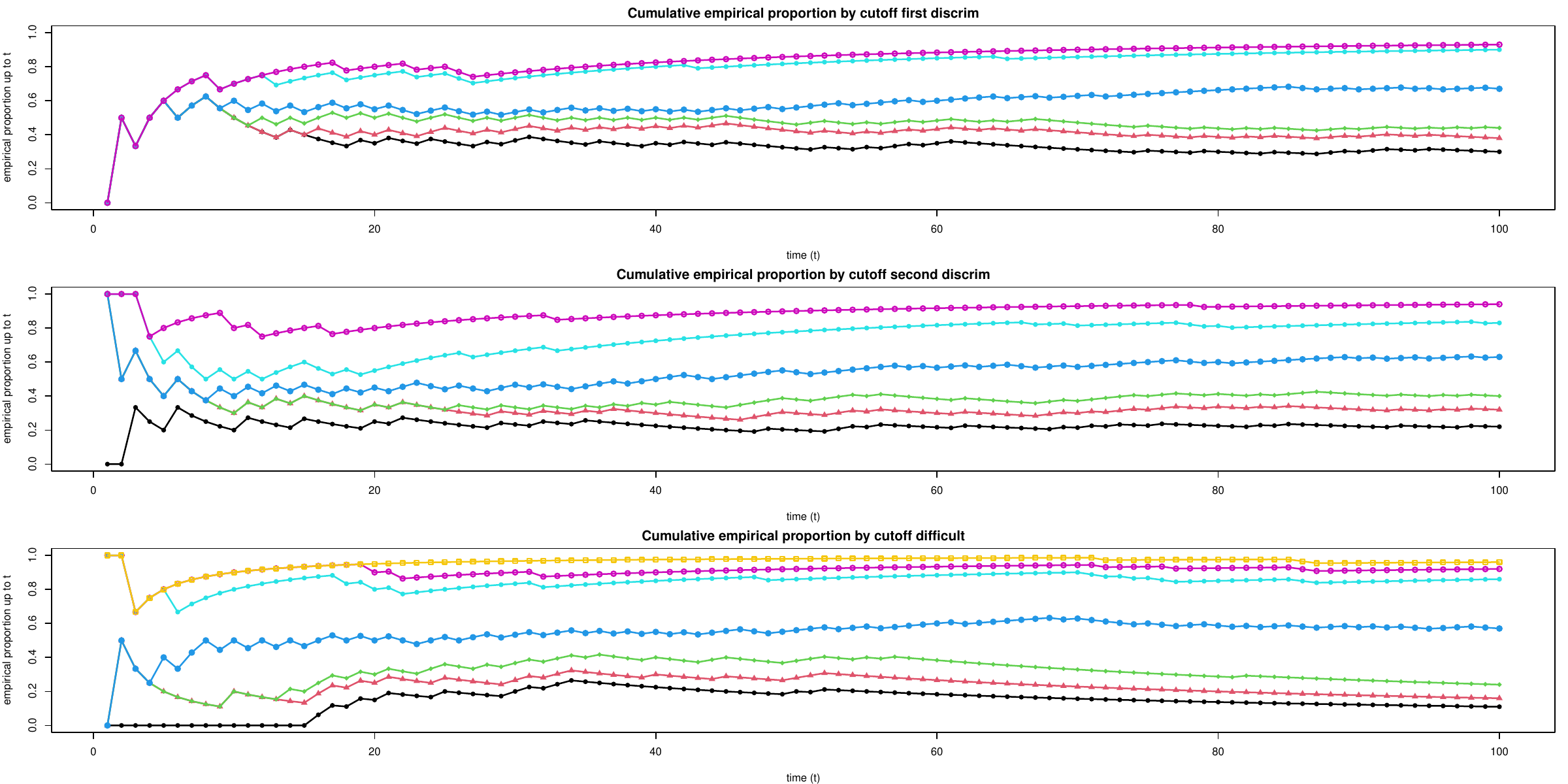}
    \caption{Empirical ratios of parameters $(\alpha_1, \alpha_2, b)$ under $\boldsymbol{\theta}^{\ast} = (1,-1)$.}
    \label{fig:1_m1_single}
\end{figure}

\begin{figure}[ht!]
    \centering
    \includegraphics[width=0.8\textwidth]{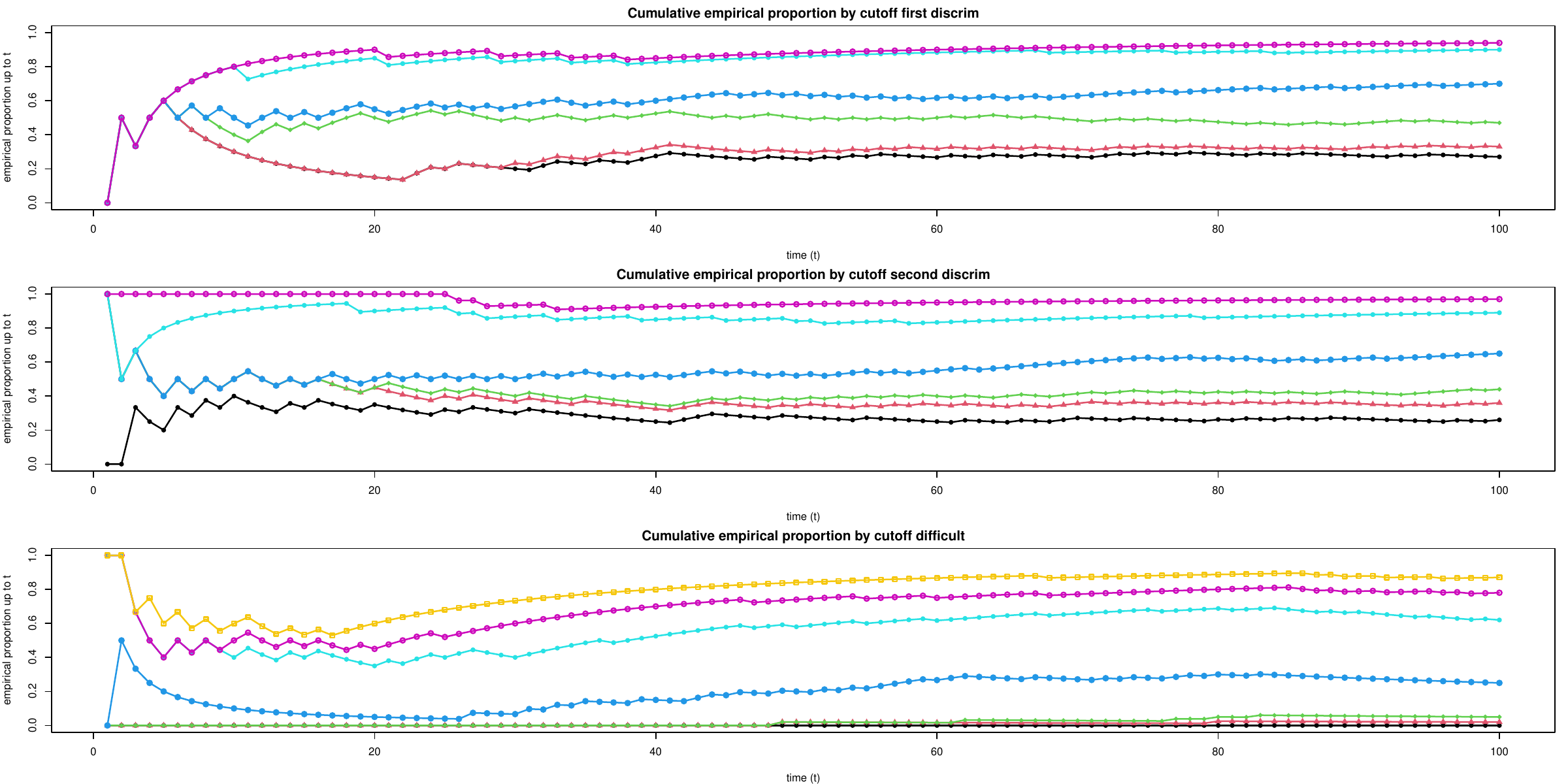}
    \caption{Empirical ratios of parameters $(\alpha_1, \alpha_2, b)$ under $\boldsymbol{\theta}^{\ast} = (0,1)$.}
    \label{fig:0_1_single}
\end{figure}

\begin{figure}[ht!]
    \centering
    \includegraphics[width=0.8\textwidth]{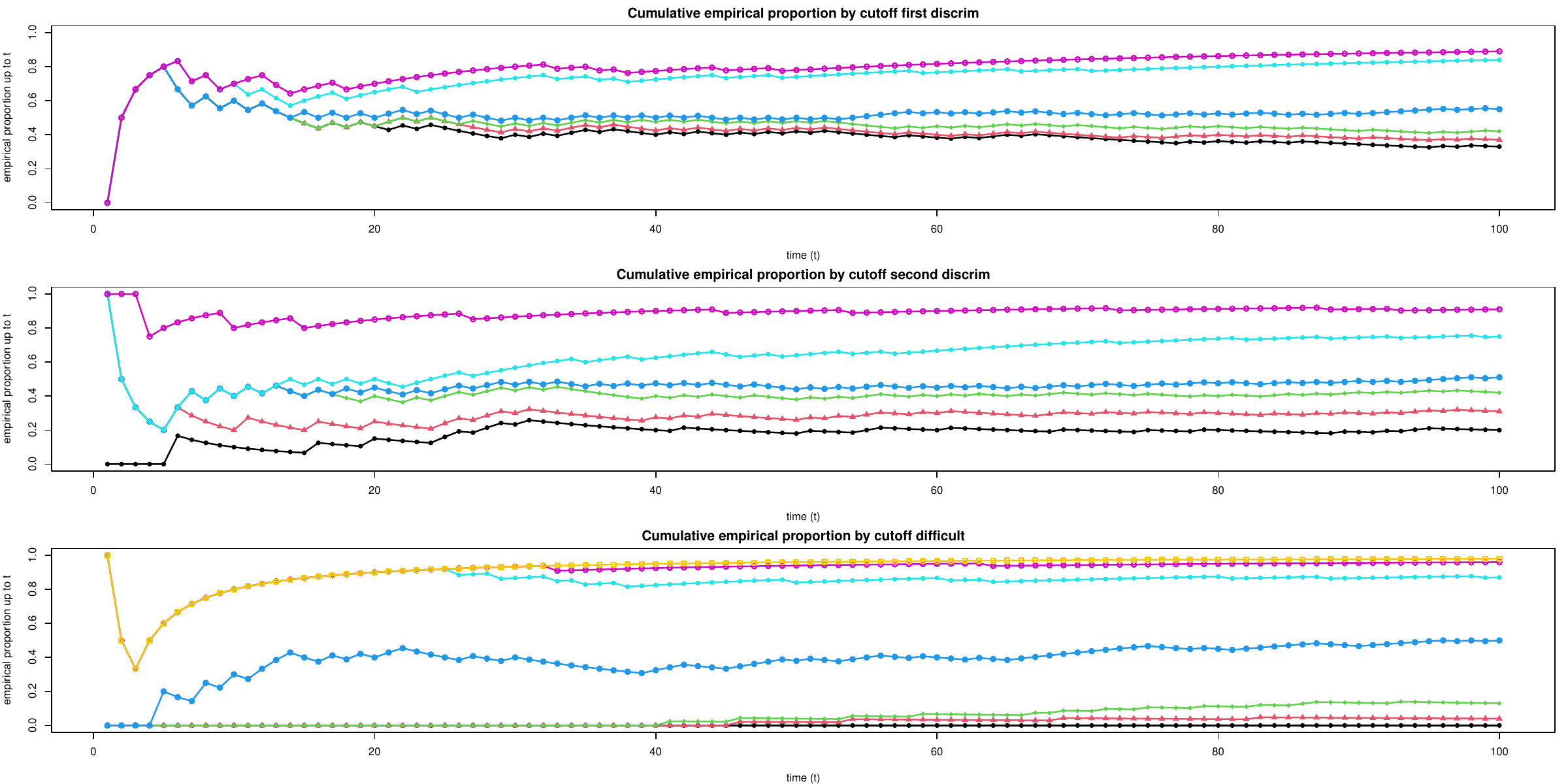}
    \caption{Empirical ratios of parameters $(\alpha_1, \alpha_2, b)$ under $\boldsymbol{\theta}^{\ast} = (0,0)$.}
    \label{fig:0_0_single}
\end{figure}

\begin{figure}[ht!]
    \centering
    \includegraphics[width=0.8\textwidth]{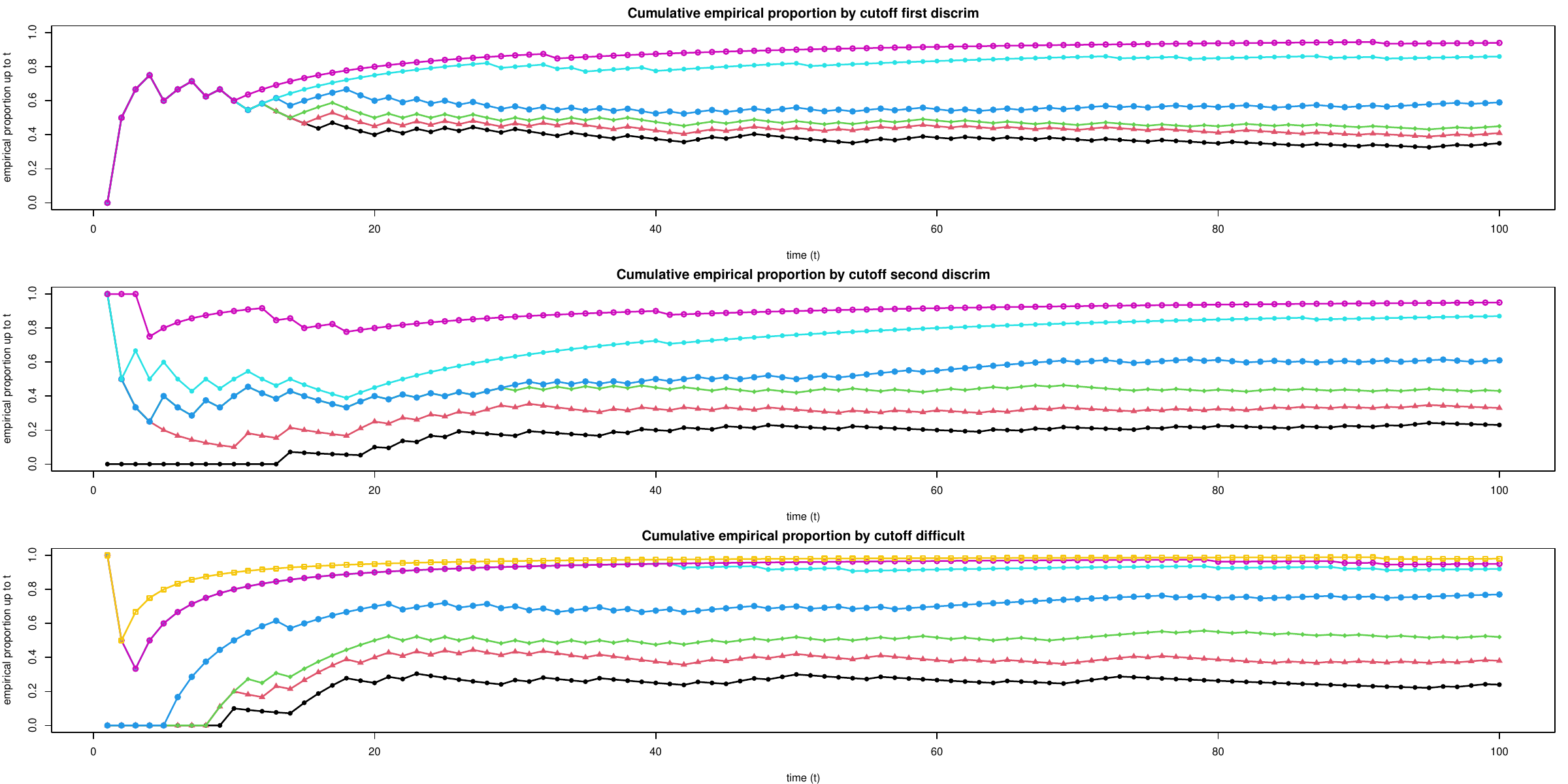}
    \caption{Empirical ratios of parameters $(\alpha_1, \alpha_2, b)$ under $\boldsymbol{\theta}^{\ast} = (0,-1)$.}
    \label{fig:0_m1_single}
\end{figure}

\begin{figure}[ht!]
    \centering
    \includegraphics[width=0.8\textwidth]{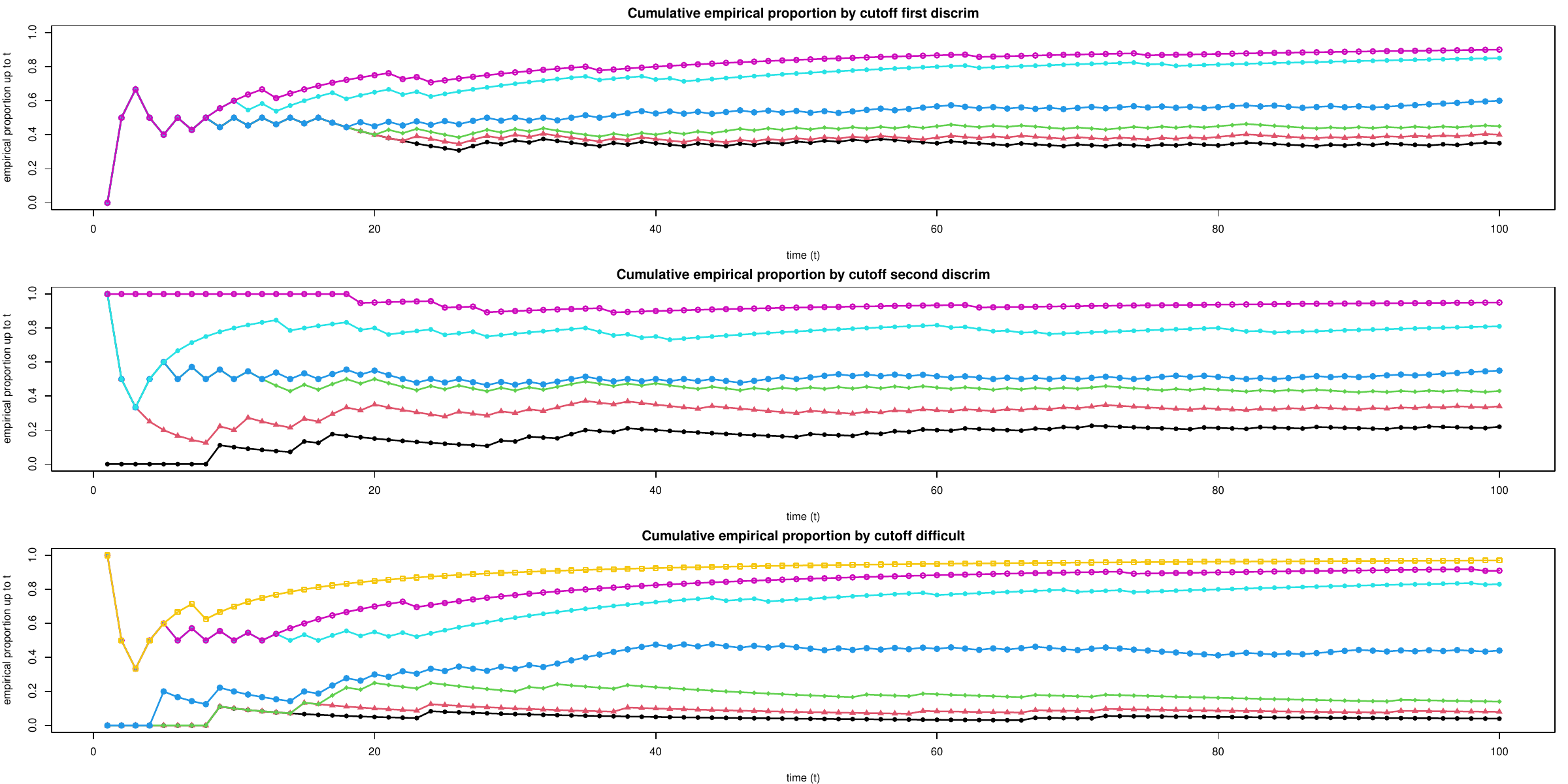}
    \caption{Empirical ratios of parameters $(\alpha_1, \alpha_2, b)$ under $\boldsymbol{\theta}^{\ast} = (-1,1)$.}
    \label{fig:m1_1_single}
\end{figure}

\begin{figure}[ht!]
    \centering
    \includegraphics[width=0.8\textwidth]{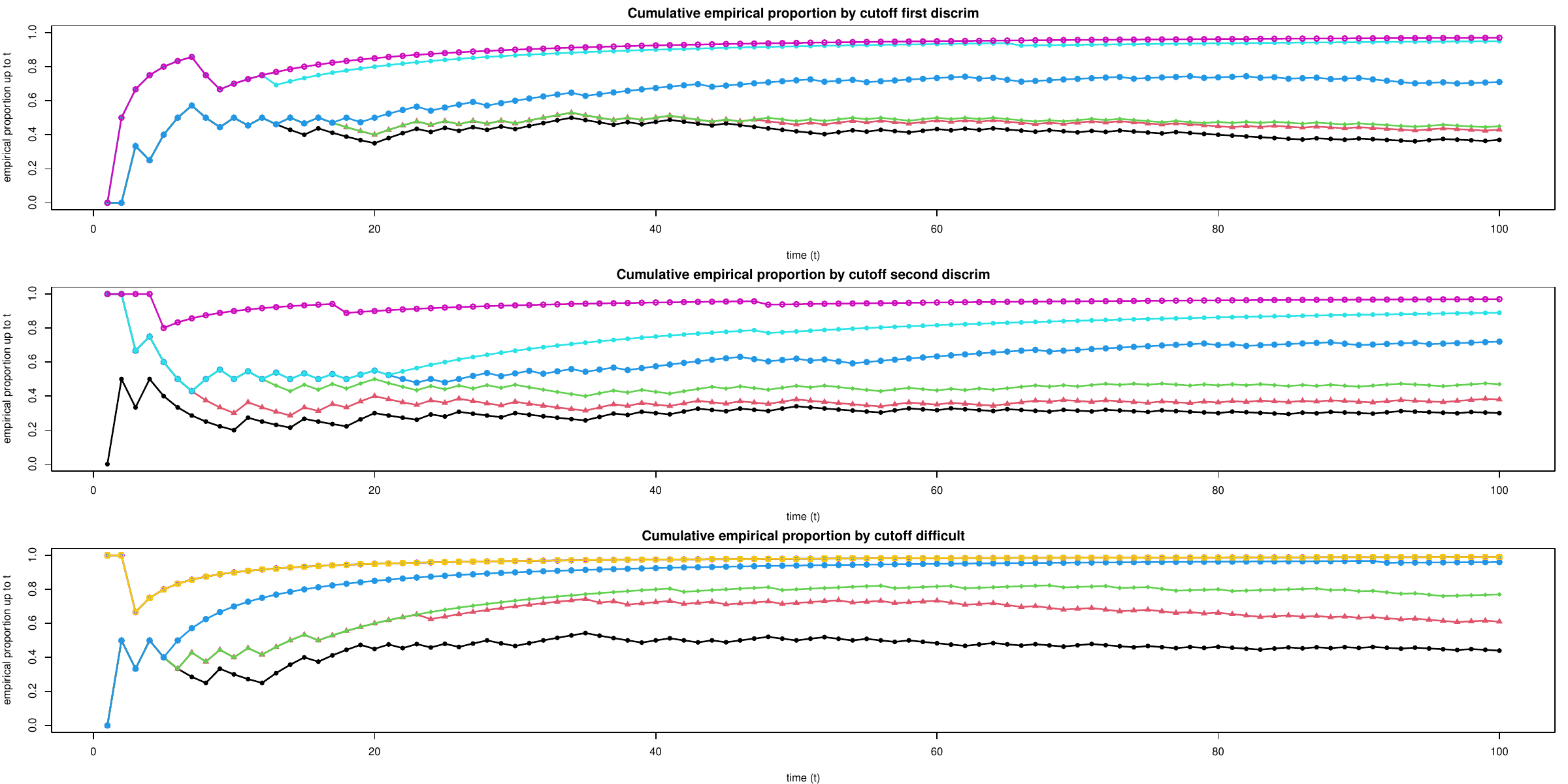}
    \caption{Empirical ratios of parameters $(\alpha_1, \alpha_2, b)$ under $\boldsymbol{\theta}^{\ast} = (-1,-1)$.}
    \label{fig:m1_m1_single}
\end{figure}
\clearpage
\refstepcounter{section}
\section*{Appendix \thesection} \label{appendix:proof}
In Appendix B, we provide proof for the theoretical results.
Under model \eqref{eq:model}, we define two random variables $Y_t$ response to the item $j_t$, adaptively selected item at step $t$, and $Y^j$ response to the item $j$. We note that $Y^j$ has the density $f_{\bftheta, j}$ under model \eqref{eq:model} is given as
\begin{equation} \label{e:density}
f_{\bftheta, j}(y^j) = \Big( \frac{\exp( \bfalpha_j^T \bftheta - b_j)}{1 + \exp( \bfalpha_j^T \bftheta - b_j)} \Big)^{y^j} \, \Big( \frac{1}{1 + \exp( \bfalpha_j^T \bftheta - b_j)} \Big)^{1 - y^j}, \quad y^j \in \{0,1\}.
\end{equation} 
Note that by defining filtration at time $t-1$ as $\mathcal{F}_{t-1} = \sigma( j_1, \cdots, j_{t-1}, Y_1, \cdots, Y_{t-1})$, $(Y_t \mid \mathcal{F}_{t-1}, j_t = j) \sim f_{\bftheta, j}$, and $Y^j \sim f_{\bftheta, j}$.

We now prove Theorem \ref{thm:optimality} and \ref{thm:normality}. The proof of Theorem \ref{thm:optimality} utilizes the proof in \citeA{li2025globally}, where verifying the $7$ regularity conditions provided in \citeA{li2025globally} is critical. Thus, we state a modified version of the $7$ regularity conditions under model \eqref{eq:model} and Assumption \ref{ass:r1}. We define a separate version of the conditions under Assumption \ref{ass:r2}, which requires additional theoretical justifications to prove Theorem \ref{thm:normality}.
 
\begin{condition}[Assumption 1 in \citeA{li2025globally}] \label{c:c1}
The parameter space $\bfTheta$ is a non-empty compact and convex subset of $\mathbb{R}^d$. The true parameter $\bftheta^{\ast}$ is an interior point of $\bfTheta$.
\end{condition}

\begin{condition}[Assumption 2 in \citeA{li2025globally}] \label{c:c2}
The support of the probability density $f_{\bftheta, j}$, $\mathrm{supp}(f_{\bftheta,j})$, depends only on $j$ and does not depend on $\bftheta$, where the support of a function is defined as
$$
\mathrm{supp}( f_{\bftheta, j} ) = \mathrm{cl} \{ y^j : f_{\bftheta, j}(y^j) > 0\},
$$
where $\mathrm{cl}(S)$ is the closure of a set $S$. Moreover, for all $j \in \{1,\cdots,M\}$ and $Y^j \in \mathrm{supp}(f_{\bftheta,j})$, the first derivative $\nabla_{\bftheta} \log f_{\bftheta, j}(Y^j) = \Big( \frac{\partial \log f_{\bftheta, j}(Y^j)}{\partial \theta_i} \Big)_{1 \leq i \leq d}$, and the Hessian matrix $\nabla_{\bftheta}^2 \log f_{\bftheta, j}(Y^j) = \Big( \frac{\partial^2 \log f_{\bftheta, j}(Y^j)}{ \partial \theta_i \partial \theta_l} \Big)_{1 \leq i,l \leq d}$ exist. Assume that there exist functions $\Psi_1^j$ and $\Psi_2^j$ satisfying $\sup_{\bftheta \in \bfTheta} \mathbb{E}_{Y^j \sim f_{\bftheta,j}} \{\Psi_1^j(Y^j) \}^2 < \infty$, $\sup_{\bftheta \in \bfTheta} \mathbb{E}_{Y^j \sim f_{\bftheta,j}} \{\Psi_2^j(Y^j) \}^2 < \infty$,
\begin{equation*}
\| \nabla_{\bftheta} \log f_{\bftheta_1, j}(Y^j) - \nabla_{\bftheta} \log f_{\bftheta_2,j}(Y^j) \|_2 \leq \Psi_1^j(Y^j) \| \bftheta_1 - \bftheta_2 \|_2,
\end{equation*}
and
\begin{equation*}
\| \nabla_{\bftheta}^2 \log f_{\bftheta_1, j}(Y^j) - \nabla_{\bftheta}^2 \log f_{\bftheta_2,j}(Y^j) \|_{op} \leq \Psi_2^j(Y^j) \| \bftheta_1 - \bftheta_2 \|_2,
\end{equation*}
for all $\bftheta_1, \bftheta_2 \in \bfTheta$, and $j \in \{1,\cdots,M\}$. Furthermore, for all $j \in \{1,\cdots,M\}$,
\begin{equation*}
\sup_{\bftheta \in \bfTheta} \mathbb{E}_{Y \sim f_{\bftheta^{\ast}, j}} \| \nabla_{\bftheta} \log f_{\bftheta,j}(Y) \|_2^2 < \infty,
\end{equation*}
and
\begin{equation*}
\sup_{\bftheta \in \bfTheta} \mathbb{E}_{Y \sim f_{\bftheta^{\ast}, j}} \| \nabla_{\bftheta}^2 \log f_{\bftheta,j}(Y) \|_{op} < \infty.
\end{equation*}
\end{condition}

\begin{condition}[Assumption 3 in \citeA{li2025globally}] \label{c:c3}
The Fisher information matrices satisfy the following conditions:
\begin{equation*}
\mathcal{I}_j(\bftheta) 
= \mathbb{E}_{Y \sim f_{\bftheta, j}} [ \nabla_{\bftheta} \log f_{\bftheta, j}(Y) \{ \nabla_{\bftheta} \log f_{\bftheta, j}(Y)\}^T ] 
= - \mathbb{E}_{Y \sim f_{\bftheta, j}} [\nabla_{\bftheta}^2 \log f_{\bftheta, j}(Y)],
\end{equation*}
and those Fisher information matrices are continuously differentiable with respect to $\bftheta$ for all $j \in \{1,\cdots,M\}$. Also, $\sum_{j \in \{1,\cdots,M\}} \mathcal{I}_j(\bftheta)$ is positive definite for every $\bftheta \in \bfTheta$.
\end{condition}

\begin{condition}[Assumption 4 in \citeA{li2025globally}] \label{c:c4}
Let $M(\bftheta; \bfpi) = \sum_{j \in \{1,\cdots,M\}} \pi_j \mathbb{E}_{Y \sim f_{\bftheta^{\ast}, j}} [\log f_{\bftheta,j}(Y)]$ for $\bfpi = (\pi_j)_{j \in \{1,\cdots,M\}}$. Assume the following uniform law of large numbers holds for all sequences $\mathbf{j}_{T_r} = (j_1,\cdots,j_{T_r})$ such that $j_i$ is measurable with respect to $\mathcal{F}_{i-1}$. For all $1 \leq i \leq T_r$,
\begin{equation*}
P \Big( \lim_{r \to \infty} \sup_{\bftheta \in \bfTheta} | \ell_{T_r}(\bftheta ; \mathbf{j}_{T_r}) - M(\bftheta ; \bar{\bfpi}_{T_r})| = 0  \Big) = 1,
\end{equation*}
where $\bar{\bfpi}_{T_r} = (\bar{\pi}_{T_r}(j ; j_1, \cdots, j_{T_r}))_{j \in \{1,\cdots,M\}}$, $\ell_{T_r}(\bftheta;\mathbf{j}_{T_r}) = \frac{1}{T_r} \sum_{s=1}^{T_r} \log f_{\bftheta,j_s}(Y_s)$, and $\bar{\pi}_{T_r}(j; j_1, \cdots, j_{T_r}) = \frac{1}{T_r} \sum_{s=1}^{T_r} \mathbf{1}_{(j_s = j)}$ for $j \in \{1,\cdots,M\}$.
\end{condition}

\begin{condition}[Assumption 5 in \citeA{li2025globally}] \label{c:c5}
The function $\Phi_{1,\delta}(\cdot):\mathcal{S}_d^{+} \to \mathbb{R}$ is convex, and it satisfies: for all positive definite matrix $\Sigma$, $\nabla \Phi_{1,\delta}(\Sigma)$ and $\nabla^2 \Phi_{1,\delta}(\Sigma)$ is continuous in $\Sigma$. Also, for all positive definite matrices satisfying $\Sigma_1 \preceq \Sigma_2$, we have $\Phi_{1,\delta}(\Sigma_1) \le \Phi_{1,\delta}(\Sigma_2)$. Finally, for $\mathcal{S}_d^+$, the set of positive definite matrices, $\sup_{\Sigma \in \mathcal{S}_d^+} \kappa( \nabla \Phi_{1,\delta}(\Sigma)) < \infty$, and $\lim_{\lambda_{max}(\Sigma) \to \infty} \Phi_{1,\delta}(\Sigma) = \infty$.
\end{condition}

\begin{condition}[Assumption 6A in \citeA{li2025globally}] \label{c:c6a}
There exist $\mathbb{R}^{d \times 1}$ vectors $\{\bfalpha_j\}_{j \in \{1,\cdots,M\}}$ and probability density functions $\{h_{\bfalpha_j^T \bftheta, j}(\cdot)\}_{j \in \{1,\cdots,M\}}$ satisfying the following requirements.
\begin{enumerate}
\item{$f_{\bftheta, j}(\cdot) = h_{\bfalpha_j^T \bftheta, j}(\cdot)$ for all $j \in \{1, \cdots, M\}$.}
\item{Let $\bfxi_j = \bfalpha_j^T \bftheta$ be a reparametrization of $\bftheta$. Assume that the Fisher information for each item $j$ with respect to $\bfxi_j$ is positive. That is, 
\begin{equation*}
\mathcal{I}_{\bfxi_j, j}(\bfxi_j) 
= \mathbb{E}_{Y \sim h_{\bfxi_j, j}}[ \{ \nabla_{\bfxi_j} \log h_{\bfxi_j, j}(Y) \}^2]
= -\mathbb{E}_{Y \sim h_{\bfxi_j, j}} [\nabla_{\bfxi_j}^2 \log h_{\bfxi_j, j}(Y)]
\end{equation*}
is positive for all $\bftheta \in \bfTheta$.}
\end{enumerate}
\end{condition}

\begin{condition}[Assumption 7A in \citeA{li2025globally}] \label{c:c7a}
There exists a constant $C_{\bfxi} > 0$ such that for all $\bftheta \in \bfTheta$, 
\begin{equation*}
\mathrm{D}_{\mathrm{KL}}(h_{\bfxi_j^{\ast}, j} || h_{\bfxi_j, j}) \geq C_{\bfxi} ( \bfxi_j^{\ast} - \bfxi_j )^2
\end{equation*}
where $\bfxi_j^{\ast} = \bfalpha_j^T \bftheta^{\ast}$, and 
\begin{equation*}
\mathrm{D}_{\mathrm{KL}}( h_{\bfxi_j^{\ast}, j} || h_{\bfxi_j, j}) = \mathbb{E}_{Y \sim h_{\bfxi_j^{\ast}, j}} \Big[ \log \Big( \frac{h_{\bfxi_j^{\ast},j}(Y)}{h_{\bfxi_j, j}(Y)} \Big) \Big].
\end{equation*}
\end{condition}

\begin{condition}[Assumption 6B in \citeA{li2025globally}] \label{c:c6b}
For $Q \subset \{1,\cdots,M\}$, define a vector space $V_Q(\bftheta) =  \sum_{j \in Q} \mathcal{R}( \mathcal{I}_j(\bftheta))$, where $\mathcal{R}(\mathbf{A})$ represents the column space of a matrix $\mathbf{A}$. Assume that the dimension $\mathrm{dim}(V_Q(\bftheta))$ does not depend on $\bftheta$, and there exist constants $0 < \underline{c} \leq \bar{c} < \infty$ which do not depend on $Q$ and $\bftheta$, such that for all $Q \subset \{1,\cdots,M\}$ and $\bftheta \in \bfTheta$
\begin{equation*}
\underline{c} \cdot \mathbf{P}_{V_{Q}(\bftheta)} \preceq \sum_{j \in Q} \mathcal{I}_j(\bftheta) \preceq \bar{c} \cdot \mathbf{P}_{V_Q(\bftheta)},
\end{equation*}
where $\mathbf{P}_{V_Q(\bftheta)}$ denotes the orthogonal projection matrix onto vector space $V_Q(\bftheta)$.
\end{condition}

\begin{condition}[Assumption 7B in \citeA{li2025globally}] \label{c:c7b}
Let $\mathcal{S}^M = \{ \bfpi = (\pi_j)_{j \in \{1,\cdots,M\}} : \sum_{j \in \{1,\cdots,M\}} \pi_j = 1, \, \text{and} \, \pi_j \geq 0 \, \text{for all} \, j \in \{1,\cdots,M\} \}$ denote the simplex in $\mathbb{R}^M$. Assume that there exists a positive constant $C_{\mathrm{KL}}$ such that for all $\bfpi \in \mathcal{S}^M$ and $\bftheta \in \bfTheta$, 
\begin{equation*}
\sum_{j \in \{1,\cdots,M\}} \pi_j \, \mathrm{D}_{\mathrm{KL}}( f_{\bftheta^{\ast},j} || f_{\bftheta, j} ) 
\geq {C}_{\mathrm{KL}} \sum_{j \in \{1, \cdots, M \} } \pi_j (\bftheta - \bftheta^{\ast})^T \mathcal{I}_{j}(\bftheta^{\ast}) (\bftheta - \bftheta^{\ast}).
\end{equation*}
\end{condition}

We now aim to show all Conditions \hyperref[c:c1]{B.1.} -- \hyperref[c:c7b]{B.9.} hold under model \eqref{eq:model} under Assumption \ref{ass:r1}. To start with, we verify Condition \hyperref[c:c5]{B.5.} is satisfied for the item-selection rule based on $\Phi_{1,\delta}$ for $\delta \in (0,1]$ under either Assumption \ref{ass:r1} or \ref{ass:r2}. 

\begin{lemma} \label{lemma:lemma1}
Assume either Assumption \ref{ass:r1} or \ref{ass:r2} holds. Let $\mathcal{S}_d^+$ be the set of positive definite symmetric matrices of dimension $d$, and $\Phi_{1,\delta}(\cdot)$ be the criterion defined in \eqref{e:selection}. Then, the following are true for $\delta \in (0,1]$.
\begin{enumerate}
\item{$\Phi_{1,\delta}(\Sigma)$ is convex over $\Sigma \in \mathcal{S}_d^+$.}
\item{For all $\Sigma \in \mathcal{S}_d^+$, $\nabla \Phi_{1,\delta}(\Sigma)$ and $\nabla^2 \Phi_{1,\delta}(\Sigma)$ are continuous in $\Sigma$.}
\item{For all matrices $\Sigma_1, \Sigma_2 \in \mathcal{S}_d^+$ satisfying $\Sigma_1 \preceq \Sigma_2$, $\Phi_{1,\delta}(\Sigma_1) \leq \Phi_{1,\delta}(\Sigma_2)$, where $\Sigma_1 \preceq \Sigma_2$ implies $\Sigma_2 - \Sigma_1$ is positive semi-definite.}
\item{$\sup_{\Sigma \in \mathcal{S}_d^+} \kappa( \nabla \Phi_{1,\delta}(\Sigma)) < \infty$, where $\kappa(\Sigma)$ is conditional number of $\Sigma \in \mathcal{S}_d^+$.}
\item{$\lim_{\lambda_{max}(\Sigma) \to \infty} \Phi_{1,\delta}(\Sigma) = \infty$, where $\lambda_{max}(\Sigma)$ is the largest eigenvalue of $\Sigma \in \mathcal{S}_d^+$.}
\end{enumerate}
That is, Condition \hyperref[c:c5]{B.5.} is satisfied for item selection rule $\Phi_{1,\delta}$.
\end{lemma}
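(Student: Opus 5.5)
The plan rests on one algebraic simplification. For $q=1$ the criterion \eqref{e:selection} is linear in $\Sigma$. Since $\operatorname{tr}(\Sigma)=\operatorname{tr}(\Sigma_{II})+\operatorname{tr}(\Sigma_{NN})$ for the partition \eqref{e:decomp}, we get
\begin{equation*}
\Phi_{1,\delta}(\Sigma)=(1-\delta)\operatorname{tr}(\Sigma_{II})+\delta\operatorname{tr}(\Sigma_{II})+\delta\operatorname{tr}(\Sigma_{NN})=\operatorname{tr}(W_\delta\Sigma),
\end{equation*}
where $W_\delta=\operatorname{diag}(\mathbf{1}_{d_I},\,\delta\mathbf{1}_{d_N})$. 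That is, $W_\delta$ is the diagonal matrix with ones in the $d_I$ intentional positions and $\delta$ in the $d_N$ nuisance positions. I will first record this identity. Every part of the lemma then reduces to an elementary property of the linear functional $\Sigma\mapsto\operatorname{tr}(W_\delta\Sigma)$ and of the fixed matrix $W_\delta$. The assumption of R1 or R2 plays no role, since the statement concerns only the deterministic function $\Phi_{1,\delta}$.

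Parts 1 and 2 follow immediately from linearity.
\begin{itemize}
\item[1.] A linear map is convex on the convex cone $\mathcal{S}_d^+$.
\item[2.] Viewing $\Phi_{1,\delta}$ as a function on symmetric matrices with the Frobenius inner product $\langle A,B\rangle=\operatorname{tr}(AB)$, its gradient is the constant matrix $\nabla\Phi_{1,\delta}(\Sigma)=W_\delta$ and its Hessian is $\nabla^2\Phi_{1,\delta}(\Sigma)=0$. Both are trivially continuous.
\end{itemize}
Here I will state the gradient convention explicitly. W_\delta is already symmetric, so whether one symmetrizes the gradient does not matter.

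Part 3 (monotonicity): if $\Sigma_1\preceq\Sigma_2$, then
\begin{equation*}
\Phi_{1,\delta}(\Sigma_2)-\Phi_{1,\delta}(\Sigma_1)=\operatorname{tr}\bigl(W_\delta^{1/2}(\Sigma_2-\Sigma_1)W_\delta^{1/2}\bigr)\ge 0,
\end{equation*}
because $W_\delta\succeq 0$ and a congruence of a positive semidefinite matrix is positive semidefinite.

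Part 4 (condition number): the gradient is the constant $W_\delta$. Its eigenvalues are $1$ and $\delta$, or only $1$ when $\delta=1$ or $d_N=0$. Hence $\kappa(\nabla\Phi_{1,\delta}(\Sigma))=\max(1,\delta)/\min(1,\delta)\le 1/\delta$, uniformly in $\Sigma$.

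Part 5 (coercivity): since $W_\delta\succeq\delta I_d$,
\begin{equation*}
\Phi_{1,\delta}(\Sigma)\ge\delta\operatorname{tr}(\Sigma)\ge\delta\lambda_{\max}(\Sigma),
\end{equation*}
which diverges as $\lambda_{\max}(\Sigma)\to\infty$.

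The only delicate points are, first, stating the gradient convention for matrix-valued arguments so that $\kappa(\nabla\Phi_{1,\delta})$ is meaningful. Second, parts 4 and 5 use $\delta>0$ in an essential way. At $\delta=0$ (the $\mathrm{A}_s$ rule) the matrix $W_0$ is singular, $\kappa$ is infinite, and coercivity fails along nuisance directions. This explains why the lemma is stated for $\delta\in(0,1]$, and I will point it out in the argument. I expect no genuine obstacle beyond this bookkeeping.
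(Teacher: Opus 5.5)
Your proof is correct and follows the paper's argument. The paper writes $\Phi_{1,\delta}(\Sigma)=\operatorname{tr}(W_\delta\Sigma W_\delta)$ with $W_\delta=\operatorname{diag}(1,\dots,1,\sqrt{\delta},\dots,\sqrt{\delta})$, so its $W_\delta^2$ is exactly your $W_\delta$, and it derives all five parts from this linear representation as you do: gradient equal to that constant matrix, $\kappa=1/\delta$, and $\Phi_{1,\delta}(\Sigma)\ge\delta\operatorname{tr}(\Sigma)$.

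The only differences are minor. In part 3, the paper goes through the Courant--Fischer comparison $\lambda_i(W_\delta\Sigma_1W_\delta)\le\lambda_i(W_\delta\Sigma_2W_\delta)$, whereas your observation that $W_\delta^{1/2}(\Sigma_2-\Sigma_1)W_\delta^{1/2}$ is positive semidefinite, hence has nonnegative trace, is shorter. In part 4, your treatment of the cases $d_N=0$ or $\delta=1$ is slightly more careful than the paper's blanket $\kappa=1/\delta$.
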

\begin{proof}
Throughout the proof, for $q=1$, we use the identity 
\begin{equation*}
\Phi_{1,\delta}(\Sigma) = \operatorname{tr( W_{\delta} \Sigma W_{\delta})},
\end{equation*}
where $W_{\delta} = diag(w_1,\cdots,w_d)$ with $w_i = \mathbf{1}_{i \in I} + \sqrt{\delta} \mathbf{1}_{i \in N}$. The proof to Statements 1--5 are as follows.
\\ 1. For any $t \in (0,1)$, and $\Sigma_1, \Sigma_2 \in \mathcal{S}_d^+$, we have
\begin{align*}
\Phi_{1,\delta}(t \Sigma_1 + (1-t) \Sigma_2) &= \operatorname{tr}\Big( W_{\delta} ( t \Sigma_1 + (1-t) \Sigma_2) W_{\delta} \Big)
\\ & = t \cdot \Phi_{1,\delta}( \Sigma_1 ) + (1 - t) \cdot \Phi_{1,\delta} (\Sigma_2).
\end{align*}
Thus, $\Phi_{1,\delta}(\Sigma)$ is a convex function over $\mathcal{S}_d^+$. 
\\ 2. The Gateaux derivative of $\Phi_{1,\delta}(\Sigma)$ at perturbation $H$, $\nabla_H \Phi_{1,\delta}(\Sigma)$ is defined as 
\begin{equation*}
\nabla_H \Phi_{1,\delta}(\Sigma) = \lim_{\bfepsilon \to 0} \frac{\Phi_{1,\delta}(\Sigma + \bfepsilon H) - \Phi_{1,\delta}(\Sigma)}{\bfepsilon}.
\end{equation*}
Then, together with the chain rule, we obtain $\nabla_{H} \Phi_{1,\delta}(\Sigma) = \operatorname{tr}( H W_{\delta}^2)$. Using the Riesz representation theorem over the Hilbert space of symmetric positive definite matrices, $\nabla \Phi_{1,\delta}(\Sigma) = W_{\delta}^2$. This shows both $\nabla \Phi_{1,\delta}(\Sigma)$ and $\nabla^2 \Phi_{1, \delta}(\Sigma)$ are continuous in $\Sigma$.
\\ 3. Consider two matrices $\Sigma_1, \Sigma_2 \in \mathcal{S}_d^+$. From the definition of the positive semi-definite matrices, for any $x \in \mathbb{R}^d$, $x^T(\Sigma_2 - \Sigma_1)x \geq 0$. Choosing $x = W_{\delta} \tilde{x}$ for any $\tilde{x} \in \mathbb{R}^d$, we have $\tilde{x}^T W_{\delta} (\Sigma_2 - \Sigma_1) W_{\delta} \tilde{x} \geq 0$, implying $W_{\delta} \Sigma_1 W_{\delta} \preceq W_{\delta} \Sigma_2 W_{\delta}$. From the Courant-Fischer-Weyl minimax principle (see Corollary III.1.2 \citeA{bhatia2013matrix}), we have $\lambda_{i}( W_{\delta} \Sigma_1 W_{\delta}) \leq \lambda_i( W_{\delta} \Sigma_2 W_{\delta})$ for $i = 1, \ldots, d$, where $\lambda_i(\Sigma)$ is the $i$-th largest eigenvalue of $\Sigma$. Thus, we have $\Phi_{1,\delta}(\Sigma_1) \leq \Phi_{1,\delta}(\Sigma_2)$.
\\ 4. From $\nabla \Phi_{1,\delta}(\Sigma) = W_{\delta}^2$, $\kappa(\nabla \Phi_{1,\delta}(\Sigma)) = \frac{1}{\delta} < \infty$ for all $\Sigma \in \mathcal{S}_{d}^+$.
\\ 5. Let $I_d$ be an identity matrix of dimension $d$. Then, $W_{\delta} \succeq \sqrt{\delta} I_d$. Observe that for $\Sigma \in \mathcal{S}_d^+$,
\begin{align*}
\operatorname{tr}( W_{\delta} \Sigma W_{\delta} ) &= \operatorname{tr}( \Sigma W_{\delta}^2) 
\\ &= \operatorname{tr}(\Sigma( \delta I_d + W_{\delta}^2 - \delta I_d) )
\\ &= \delta \operatorname{tr}(\Sigma) + \operatorname{tr}( \Sigma^{1/2} ( W_{\delta}^2 - \delta I_d) \Sigma^{1/2} )
\\ & \ge \delta \operatorname{tr}(\Sigma),
\end{align*}
where the final inequality follows from $\Sigma^{1/2} ( W_{\delta}^2 - \delta I_d) \Sigma^{1/2}$ is positive semi-definite. Now from $\lim_{\lambda_{max}(\Sigma) \to \infty} \operatorname{tr}(\Sigma) = \infty$, we prove the statement.
\end{proof}

We now show that under Assumption \ref{ass:r1}, \ref{ass:compact}, and \ref{ass:dimension}, the remaining conditions hold. 

\begin{lemma} \label{lemma:lemma2}
Suppose Assumptions \ref{ass:r1}, \ref{ass:compact}, and \ref{ass:dimension} hold. Under the item-selection rule defined in \eqref{e:wa-optimal}, Condition \hyperref[c:c1]{B.1.} --  \hyperref[c:c7b]{B.9.} hold.
\end{lemma}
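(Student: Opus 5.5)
We verify Conditions \hyperref[c:c1]{B.1.}--\hyperref[c:c7b]{B.9.} one at a time. Condition \hyperref[c:c5]{B.5.} is already given by Lemma \ref{lemma:lemma1}. Everything else rests on three facts. First, there are only finitely many item types $j\in\{1,\dots,M\}$. Second, $\bfTheta$ is compact (Assumption \ref{ass:compact}, which directly gives Condition \hyperref[c:c1]{B.1.}). Third, the M2PL density \eqref{e:density} is an exponential family in the scalar $\xi_j=\bfalpha_j^\top\bftheta-b_j$. Set
\[
K=\max_{j\le M}\sup_{\bftheta\in\bfTheta}|\bfalpha_j^\top\bftheta-b_j|,
\]
which is finite. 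Then every $p_j(\bftheta)$ lies in $[\epsilon_0,1-\epsilon_0]$ with $\epsilon_0=\operatorname{logit}^{-1}(-K)>0$, uniformly over $j$ and $\bftheta$.

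\emph{Smoothness conditions.} For Condition \hyperref[c:c2]{B.2.}, the support is $\{0,1\}$ for every $\bftheta$. Explicitly,
\[
\nabla_{\bftheta}\log f_{\bftheta,j}(y)=(y-p_j(\bftheta))\bfalpha_j,\qquad \nabla^2_{\bftheta}\log f_{\bftheta,j}(y)=-p_j(1-p_j)\bfalpha_j\bfalpha_j^\top .
\]
Since $\sigma=\operatorname{logit}^{-1}$ has bounded first and second derivatives, both expressions are Lipschitz in $\bftheta$. The Lipschitz constants are $\|\bfalpha_j\|^2/4$ and $c\|\bfalpha_j\|^3$, so we may take $\Psi_1^j,\Psi_2^j$ to be these constants, which trivially have finite second moments. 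The sup-moment bounds are immediate because $|y-p_j|\le1$.

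For Condition \hyperref[c:c3]{B.3.}, the information identity for Bernoulli exponential families gives $\mathcal I_j(\bftheta)=p_j(1-p_j)\bfalpha_j\bfalpha_j^\top$, and this is $C^\infty$ in $\bftheta$. Positive definiteness of $\sum_j\mathcal I_j(\bftheta)$ follows from $p_j(1-p_j)\ge\epsilon_0(1-\epsilon_0)>0$ together with Assumption \ref{ass:dimension}.

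Condition \hyperref[c:c6a]{B.6.} holds by taking $h_{\xi,j}$ to be the Bernoulli$(\sigma(\xi-b_j))$ density. Its information is $\sigma'(\xi-b_j)>0$.

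\emph{KL lower bounds.} For Condition \hyperref[c:c7a]{B.7.}, we Taylor-expand the Bernoulli KL divergence in $\xi$. This gives
\[
D_{\mathrm{KL}}(h_{\xi^*}\|h_\xi)=\tfrac12\sigma'(\tilde\xi-b_j)(\xi^*-\xi)^2
\]
for some intermediate $\tilde\xi$. Since $\sigma'\ge\epsilon_0(1-\epsilon_0)$ on the compact range, we can take $C_{\bfxi}=\epsilon_0(1-\epsilon_0)/2$.

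Condition \hyperref[c:c7b]{B.9.} follows by combining this bound with the upper bound $\mathcal I_j(\bftheta^*)\preceq\tfrac14\bfalpha_j\bfalpha_j^\top$. Together they give
\[
D_{\mathrm{KL}}(f_{\bftheta^*,j}\|f_{\bftheta,j})\ge 2\epsilon_0(1-\epsilon_0)\,(\bftheta-\bftheta^*)^\top\mathcal I_j(\bftheta^*)(\bftheta-\bftheta^*)
\]
for each $j$. Summing with weights $\pi_j$ yields the condition with $C_{\mathrm{KL}}=2\epsilon_0(1-\epsilon_0)$.

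For Condition \hyperref[c:c6b]{B.8.}, we note that $V_Q(\bftheta)=\operatorname{span}\{\bfalpha_j:j\in Q\}$ does not depend on $\bftheta$. The matrix $\sum_{j\in Q}\mathcal I_j(\bftheta)$ is sandwiched between $\epsilon_0(1-\epsilon_0)\sum_{j\in Q}\bfalpha_j\bfalpha_j^\top$ and $\tfrac14\sum_{j\in Q}\bfalpha_j\bfalpha_j^\top$. The Gram matrix $\sum_{j\in Q}\bfalpha_j\bfalpha_j^\top$ has range exactly $V_Q$, and its nonzero eigenvalues lie in some $[\lambda_Q,\Lambda_Q]$ with $\lambda_Q>0$. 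Because there are only finitely many subsets $Q$, taking minima and maxima over $Q$ gives uniform constants $\underline c$ and $\bar c$.

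\emph{Main obstacle: the uniform law of large numbers.} Condition \hyperref[c:c4]{B.4.} is the step I expect to be hardest, because the $j_s$ are chosen adaptively. I would decompose
\[
\ell_{T_r}(\bftheta)-M(\bftheta;\bar\bfpi_{T_r})=\frac1{T_r}\sum_{s\le T_r}\bigl\{\log f_{\bftheta,j_s}(Y_s)-\mathbb E[\log f_{\bftheta,j_s}(Y_s)\mid\mathcal F_{s-1}]\bigr\}.
\]
This is a normalized martingale with bounded increments, since $|\log f|\le\log(1/\epsilon_0)$. The argument would then proceed in three steps.
\begin{itemize}
\item For fixed $\bftheta$, the martingale strong law (for example via Azuma's inequality plus Borel--Cantelli, using $T_r\to\infty$; if $T_r$ repeats values, pass to distinct values) gives almost-sure convergence to $0$.
\item Uniformity over $\bftheta$ follows because $\bftheta\mapsto\log f_{\bftheta,j}(y)$ is Lipschitz with constant $\max_j\|\bfalpha_j\|$, uniformly in $j$ and $y$. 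Hence a countable dense net of $\bfTheta$ suffices.
\item Alternatively, write $\log f_{\bftheta,j}(Y)=Y\xi_j-\log(1+e^{\xi_j})$. The martingale part is then $\frac1{T_r}\sum_{j}(\bfalpha_j^\top\bftheta-b_j)\sum_{s:j_s=j}(Y_s-p_j(\bftheta^*))$, which is linear in $\bftheta$ with finitely many martingale coefficients. This reduces uniformity to $M$ scalar strong laws.
\end{itemize}
I would present this exponential-family reduction as the main argument, since it avoids the covering step entirely.
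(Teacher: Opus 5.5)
Your proof is correct, but it takes a genuinely different route from the paper. The paper's proof is essentially a citation. Condition B.1 is read off Assumption~\ref{ass:compact}, Condition B.5 comes from Lemma~\ref{lemma:lemma1}, and every remaining condition is delegated to Corollaries 11 and 12 of \citeA{li2025globally}.

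You instead verify each condition directly from two ingredients: the uniform bound $p_j(\bftheta)\in[\epsilon_0,1-\epsilon_0]$ (from finitely many types plus compactness), and the exponential-family form of the M2PL likelihood.
\begin{itemize}
\item The Taylor (Bregman) form of the Bernoulli KL divergence gives Conditions B.7 and B.9 with explicit constants.
\item The Gram-matrix sandwich, with a min/max over the finitely many $Q$, gives B.8.
\item Writing the martingale part of $\ell_{T_r}-M$ as a combination of $M$ scalar martingale averages with bounded coefficients $\bfalpha_j^\top\bftheta-b_j$ gives the uniform law of large numbers with no covering or bracketing.
\end{itemize}

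The paper's route buys brevity. Yours buys transparency. It shows that Assumption~\ref{ass:dimension} is used only for positive definiteness in B.3. It shows that, apart from B.5, the selection rule enters only through predictability of $j_s$. It also produces explicit constants such as $C_{\bfxi}=\epsilon_0(1-\epsilon_0)/2$ and $C_{\mathrm{KL}}=2\epsilon_0(1-\epsilon_0)$.

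Your approach is also in the spirit of what the paper itself does for R2 in Lemmas~\ref{lemma:lemma3}--\ref{lemma:lemma6}. In fact your linear-in-$\bftheta$ argument does not need the grouping by type. The martingale part equals $\bftheta^\top\mathbf{u}_{T_r}-v_{T_r}$, with $\mathbf{u}_{T_r}=T_r^{-1}\sum_{s}\bfalpha_{j_s}\{Y_s-p_{j_s}(\bftheta^\ast)\}$ and $v_{T_r}=T_r^{-1}\sum_s b_{j_s}\{Y_s-p_{j_s}(\bftheta^\ast)\}$. So under Assumption~\ref{ass:information3} it would also replace the bracketing argument of Lemma~\ref{lemma:lemma5}.

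One cosmetic fix: you first set $\xi_j=\bfalpha_j^\top\bftheta-b_j$, but then use $h_{\xi,j}=\mathrm{Bernoulli}(\operatorname{logit}^{-1}(\xi-b_j))$, which is the paper's convention $\xi_j=\bfalpha_j^\top\bftheta$. Pick one and use it throughout.
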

\begin{proof}
Condition \hyperref[c:c1]{B.1.} holds from the Assumption \ref{ass:compact}, and Condition \hyperref[c:c5]{B.5.} holds from Lemma \ref{lemma:lemma1}. The remaining conditions hold following the Corollary 11 and 12 from \citeA{li2025globally}.
\end{proof}

We next clarify how Assumptions \ref{ass:information1}–\ref{ass:information3} follow under R1. Suppose Assumptions \ref{ass:r1}, \ref{ass:compact}, and \ref{ass:dimension} hold. Then Assumption \ref{ass:information1} follows from Proposition 1 of \citeA{li2025globally}. Moreover, if $\Phi_{1,\delta}\!\left[ \left\{\sum_{a=1}^{M}\pi_a\mathcal{I}_a(\bftheta^\ast)\right\}^{-1}\right]$
has a unique minimizer $\bfpi^\ast$, then Assumption \ref{ass:information2} follows from Theorem 2 of \citeA{li2025globally}. Finally, Assumption \ref{ass:information3} holds automatically because, under R1, there are only finitely many distinct item types. We now prove Theorem \ref{thm:optimality}, using Lemma \ref{lemma:lemma2}, and the results proved in \citeA{li2025globally}.

\begin{proof}[Proof of Theorem \ref{thm:optimality}]
1. All regularity conditions \hyperref[c:c1]{B.1.} -- \hyperref[c:c7b]{B.9.} hold from Lemma \ref{lemma:lemma2}. Then the statement follows from Theorem 6 of \citeA{li2025globally}.
\\ 2. By choosing $L_{\delta}(\bftheta^{\ast}, \hat{\bftheta}) = \langle W_{\delta}^2(\bftheta^{\ast} - \hat{\bftheta}), \bftheta^{\ast} - \hat{\bftheta} \rangle$, we have $\operatorname{WMSE}_{\delta}(\hat{\bftheta}) = L_{\delta}(\bftheta^{\ast}, \hat{\bftheta})$ and $\delta I_d \preceq \frac{1}{2} \nabla_{\hat{\bftheta}}^2 L_{\delta}(\bftheta^{\ast}, \hat{\bftheta}) \preceq I_d$. Since all regularity conditions hold, the statement follows by applying the first statement of Theorem 2 in \citeA{li2025globally}.
\\ 3. All the regularity conditions hold from Lemma \ref{lemma:lemma2}. Then, applying the second statement of Theorem 2 in \citeA{li2025globally}, we prove the statement.
\end{proof}

Next, we prove Theorem \ref{thm:normality}.
Unlike Theorem \ref{thm:optimality}, the proof of Theorem \ref{thm:normality}
requires separate arguments under Assumptions \ref{ass:r1} and \ref{ass:r2}.
Under Assumption \ref{ass:r1}, the result follows directly from the theory in
\citeA{li2025globally}, since Lemma \ref{lemma:lemma2} allows us to verify the
required regularity conditions.
Under Assumption \ref{ass:r2}, however, additional work is needed because the
unique items in the item pool $\mathcal{J}_{\infty}$ is no longer a finite set assumed in \citeA{li2025globally}. This requires modifying several lemmas from \citeA{li2025globally} as well as adjusting the corresponding regularity conditions. We begin with stating the modified regularity conditions under Assumption \ref{ass:r2}. We only state the regularity conditions that need to be modified.

\begin{condition}[Modified Assumption 2 in \citeA{li2025globally}] \label{c:c2'}
The support of the probability density $f_{\bftheta, j}$, $\mathrm{supp}(f_{\bftheta,j})$, depends only on $j$ and does not depend on $\bftheta$, where the support of a function is defined as
$$
\mathrm{supp}( f_{\bftheta, j} ) = \mathrm{cl} \{ y^j : f_{\bftheta, j}(y^j) > 0\},
$$
where $\mathrm{cl}(S)$ is the closure of a set $S$. Moreover, for all $j \in \mathcal{J}_{\infty}$ and $Y^j \in \mathrm{supp}(f_{\bftheta,j})$, the first derivative $\nabla_{\bftheta} \log f_{\bftheta, j}(Y^j) = \Big( \frac{\partial \log f_{\bftheta, j}(Y^j)}{\partial \theta_i} \Big)_{1 \leq i \leq d}$, and the Hessian matrix $\nabla_{\bftheta}^2 \log f_{\bftheta, j}(Y^j) = \Big( \frac{\partial^2 \log f_{\bftheta, j}(Y^j)}{ \partial \theta_i \partial \theta_l} \Big)_{1 \leq i,l \leq d}$ exist. Assume that there exist constants $C_{\Psi_1} < \infty$ and $C_{\Psi_2} < \infty$ satisfying 
\begin{equation*}
\| \nabla_{\bftheta} \log f_{\bftheta_1, j}(Y^j) - \nabla_{\bftheta} \log f_{\bftheta_2,j}(Y^j) \|_2 \leq C_{\Psi_1} \| \bftheta_1 - \bftheta_2 \|_2,
\end{equation*}
and
\begin{equation*}
\| \nabla_{\bftheta}^2 \log f_{\bftheta_1, j}(Y^j) - \nabla_{\bftheta}^2 \log f_{\bftheta_2,j}(Y^j) \|_{op} \leq C_{\Psi_2} \| \bftheta_1 - \bftheta_2 \|_2,
\end{equation*}
for all $\bftheta_1, \bftheta_2 \in \bfTheta$, and $j \in \mathcal{J}_{\infty}$. Furthermore, 
\begin{equation*}
\sup_{j \in \mathcal{J}_{\infty}, \bftheta \in \bfTheta} \mathbb{E}_{Y \sim f_{\bftheta^{\ast}, j}} \| \nabla_{\bftheta} \log f_{\bftheta,j}(Y) \|_2^2 < \infty,
\end{equation*}
and
\begin{equation*}
\sup_{j \in \mathcal{J}_{\infty}, \bftheta \in \bfTheta} \mathbb{E}_{Y \sim f_{\bftheta^{\ast}, j}} \| \nabla_{\bftheta}^2 \log f_{\bftheta,j}(Y) \|_{op} < \infty.
\end{equation*}
\end{condition}

\begin{condition}[Modified Assumption 3 in \citeA{li2025globally}] \label{c:c3'}
The Fisher information matrices satisfy the following conditions:
\begin{equation*}
\mathcal{I}_j(\bftheta) 
= \mathbb{E}_{Y \sim f_{\bftheta, j}} [ \nabla_{\bftheta} \log f_{\bftheta, j}(Y) \{ \nabla_{\bftheta} \log f_{\bftheta, j}(Y)\}^T ] 
= - \mathbb{E}_{Y \sim f_{\bftheta, j}} [\nabla_{\bftheta}^2 \log f_{\bftheta, j}(Y)],
\end{equation*}
and those Fisher information matrices are continuously differentiable with respect to $\bftheta$ for all $j \in \mathcal{J}_{\infty}$. Moreover, for the selected items $(j_1,\dots,j_{T_r})$, there exist $r_0$ and $c_0 > 0$ such that for all $r \ge r_0$ and all $\bftheta \in \bfTheta$,
\begin{equation*}
\lambda_{\min}\!\left(\frac{1}{T_r}\sum_{s=1}^{T_r} \mathcal{I}_{j_s}(\bftheta)\right)\ge c_0.
\end{equation*}
\end{condition}

\begin{condition}[Modified Assumption 4 in \citeA{li2025globally}] \label{c:c4'}
Assume the following uniform law of large numbers holds for all sequences $\mathbf{j}_{T_r} = (j_1,\cdots,j_{T_r})$ such that $j_i$ is measurable with respect to $\mathcal{F}_{i-1}$. For all $1 \leq i \leq T_r$,
\begin{equation*}
P \Big\{ \lim_{r \to \infty} \sup_{\bftheta \in \bfTheta} | \ell_{T_r}(\bftheta ; \mathbf{j}_{T_r}) - M(\bftheta ; \bar{\bfpi}_{T_r})| = 0  \Big\} = 1,
\end{equation*}
where $M(\bftheta; \bar{\bfpi}_{T_r}) = \frac{1}{T_r} \sum_{s=1}^{T_r} \mathbb{E}[ \log f_{\bftheta, j_s} (Y_s) \mid \mathcal{F}_{s-1}]$.
\end{condition}

\begin{condition}[Modified Assumption 6A in \citeA{li2025globally}] \label{c:c6a'}
There exist $\mathbb{R}^{d \times 1}$ vectors $\{\bfalpha_j\}_{j \in \mathcal{J}_{\infty}}$ and probability density functions $\{h_{\bfalpha_j^T \bftheta, j}(\cdot)\}_{j \in \mathcal{J}_{\infty}}$ satisfying the following requirements.
\begin{enumerate}
\item{$f_{\bftheta, j}(\cdot) = h_{\bfalpha_j^T \bftheta, j}(\cdot)$ for all $j \in \mathcal{J}_{\infty}$.}
\item{Let $\bfxi_j = \bfalpha_j^T \bftheta$ be a reparametrization of $\bftheta$. Assume that the Fisher Information of each item $j$ is positive with respect to $\bfxi_j$. That is, 
\begin{equation*}
\mathcal{I}_{\bfxi_j, j}(\bfxi_j) 
= \mathbb{E}_{Y \sim h_{\bfxi_j, j}}[ \{ \nabla_{\bfxi_j} \log h_{\bfxi_j, j}(Y) \}^2]
= -\mathbb{E}_{Y \sim h_{\bfxi_j, j}} [\nabla_{\bfxi_j}^2 \log h_{\bfxi_j, j}(Y)]
\end{equation*}
is positive for all $\bftheta \in \bfTheta$.}
\end{enumerate}
\end{condition}

\begin{condition}[Modified Assumption 7A in \citeA{li2025globally}] \label{c:c7a'}
There exists a constant $C_{\bfxi} > 0$ such that for all $j \in \mathcal{J}_{\infty}$ and $\bftheta \in \bfTheta$, 
\begin{equation*}
\mathrm{D}_{\mathrm{KL}}(h_{\bfxi_j^{\ast}, j} || h_{\bfxi_j, j}) \geq C_{\bfxi} ( \bfxi_j^{\ast} - \bfxi_j )^2
\end{equation*}
where $\bfxi_j^{\ast} = \bfalpha_j^T \bftheta^{\ast}$, and 
\begin{equation*}
\mathrm{D}_{\mathrm{KL}}( h_{\bfxi_j^{\ast}, j} || h_{\bfxi_j, j}) = \mathbb{E}_{Y \sim h_{\bfxi_j^{\ast}, j}} \Big[ \log \Big( \frac{h_{\bfxi_j^{\ast},j}(Y)}{h_{\bfxi_j, j}(Y)} \Big) \Big].
\end{equation*}
\end{condition}

\begin{condition}[Modified Assumption 7B in \citeA{li2025globally}] \label{c:c7b'}
There exists $r_0$ such that for all $r \geq r_0$, and for some constant $\tilde{C}_{\mathrm{KL}}$ 
\begin{equation*}
\sum_{j \in A_r} \frac{1}{T_r} \, \mathrm{D}_{\mathrm{KL}}( f_{\bftheta^{\ast},j} || f_{\bftheta, j} ) 
\geq \tilde{C}_{\mathrm{KL}} \sum_{j \in A_r } \frac{1}{T_r} (\bftheta - \bftheta^{\ast})^T \mathcal{I}_{j}(\bftheta^{\ast}) (\bftheta - \bftheta^{\ast}),
\end{equation*}
where $A_r \in \mathcal{G}_{T_r}$ with $\mathcal{G}_{T_r} = \{ D \subset \mathcal{J}_{T_r} : |D| = T_r \}$.
\end{condition}

We begin with the proving lemmas that are used to prove the modified regularity conditions remain to be satisfied under Assumption \ref{ass:r2}, together with other Assumptions.

\begin{lemma} \label{lemma:lemma3}
Suppose Assumptions \ref{ass:r2}, \ref{ass:compact}, \ref{ass:information1}, and \ref{ass:information3} hold. Then the following statements are true.
\begin{enumerate}
\item{There exists a constant $C_{\ell}$ such that 
\begin{equation*}
\sup_{j \in \mathcal{J}_{\infty}, \bftheta \in \bfTheta} | \log f_{\bftheta, j}(y^j) | \leq C_{\ell}.
\end{equation*}}
\item{There exists some constant $C_{\bfalpha} > 0$ from Assumption \ref{ass:information3}, such that
\begin{equation*}
\sup_{j \in \mathcal{J}_{\infty}, \bftheta \in \bfTheta} \| \nabla_{\bftheta} \log f_{\bftheta, j}(y^j) \|_2 \leq C_{\bfalpha}.
\end{equation*}
}
\item{For any $\bftheta_1, \bftheta_2 \in \bfTheta$, there exist some constant $C_{\alpha} > 0$ such that 
\begin{equation*}
| \log f_{\bftheta_1, j}(y^j) - \log f_{\bftheta_2, j}(y^j) | \leq C_{\alpha} \| \bftheta_1 - \bftheta_2 \|_2,
\end{equation*}
where $C_{\alpha}$ is the same constant defined in Assumption \ref{ass:information3}, and does not depend on $j \in \mathcal{J}_{\infty}$.}
\item{For any $\bftheta_1, \bftheta_2 \in \bfTheta$, there exists some constant $C_{\Psi_1} > 0$ such that 
\begin{equation*}
\| \nabla_{\bftheta} \log f_{\bftheta_1, j}(y^j) - \nabla_{\bftheta} \log f_{\bftheta_2, j}(y^j) \|_{2} \leq C_{\Psi_1} \| \bftheta_1 - \bftheta_2 \|_2,
\end{equation*}
where $C_{\Psi_1}$ does not depend on $j \in \mathcal{J}_{\infty}$.}
\item{For any $\bftheta_1, \bftheta_2 \in \bfTheta$, there exists some constant $C_{\Psi_2} > 0$ such that 
\begin{equation*}
\| \nabla_{\bftheta}^2 \log f_{\bftheta_1, j}(y^j) - \nabla_{\bftheta}^2 \log f_{\bftheta_2, j}(y^j) \|_{op} \leq C_{\Psi_2} \| \bftheta_1 - \bftheta_2 \|_2,
\end{equation*}
where $C_{\Psi_2}$ does not depend on $j \in \mathcal{J}_{\infty}$.}
\item{There exists some positive constants $0 < m_1 \le m_2 < \infty$ such that $m_1 \leq | \nabla_{\bfxi_j}^2 \log h_{\bfxi_j, j}(y_j)| \leq m_2$ for all $j \in\mathcal{J}_{\infty}$ and $\bftheta \in \bfTheta$.}
\item{There exists some constant $C_{op}$ such that 
\begin{equation*}
\sup_{j \in \mathcal{J}_{\infty}, \bftheta \in \bfTheta} \| \nabla_{\bftheta}^2 \log f_{\bftheta, j}(y_j) \|_{op} \le C_{op},
\end{equation*}
where $\| \cdot \|_{op}$ is the operator norm.}
\item{For any $r \geq 1$, and any $\bftheta \in \bfTheta$,
\begin{equation*}
\frac{m_1}{m_2} \cdot \frac{1}{T_r} \Lambda_{T_r}(  \hat{\bftheta}_{T_r}^{\mathrm{ML}} ) \preceq 
\frac{1}{T_r} \Lambda_{T_r}(  \bftheta ) 
\preceq
\frac{m_2}{m_1} \cdot \frac{1}{T_r} \Lambda_{T_r}( \hat{\bftheta}_{T_r}^{\mathrm{ML}}  ),
\end{equation*}
for $m_1, m_2$ defined in the previous statement and $\Lambda_{T_r}( \bftheta )$ defined in \eqref{e:cumulative_fisher}.}
\end{enumerate}
\end{lemma}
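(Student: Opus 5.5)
The plan is to reduce every statement to elementary bounds on the scalar logistic link. Write $\xi_j=\bfalpha_j^\top\bftheta-b_j$ and $\sigma(x)=\operatorname{logit}^{-1}(x)$. By Assumption \ref{ass:compact}, $\bfTheta$ is compact, so $B_\Theta:=\sup_{\bftheta\in\bfTheta}\|\bftheta\|_2<\infty$. By Assumption \ref{ass:information3}, $C_{\bfalpha}:=\sup_j\|\bfalpha_j\|_2<\infty$ and $\sup_j|b_j|<\infty$. Hence $|\xi_j|\le K:=C_{\bfalpha}B_\Theta+\sup_j|b_j|$ uniformly over $j\in\mathcal{J}_\infty$ and $\bftheta\in\bfTheta$. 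All constants will be expressed through $K$ and $C_{\bfalpha}$, so none of them depends on $j$. This uniformity is the only point where R2 differs from the finite-type case.

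For the individual statements I use the explicit formulas
\[
\log f_{\bftheta,j}(y)=y\,\xi_j-\log(1+e^{\xi_j}),\qquad
\nabla_{\bftheta}\log f_{\bftheta,j}(y)=(y-\sigma(\xi_j))\bfalpha_j,\qquad
\nabla^2_{\bftheta}\log f_{\bftheta,j}(y)=-\sigma(\xi_j)(1-\sigma(\xi_j))\bfalpha_j\bfalpha_j^\top .
\]
\textbf{Statement 1.} Since $\sigma(\xi_j)\in[\sigma(-K),\sigma(K)]$, it follows that $|\log f|\le\log(1+e^{K})$.

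\textbf{Statement 2.} Since $|y-\sigma|\le1$, the score is bounded by $C_{\bfalpha}$.

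\textbf{Statement 3.} Apply the mean value theorem on the convex set $\bfTheta$ together with Statement 2.

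\textbf{Statement 4.} The map $\bftheta\mapsto(y-\sigma(\xi_j))\bfalpha_j$ has derivative $-\sigma'(\xi_j)\bfalpha_j\bfalpha_j^\top$. Because $\sigma'\le1/4$, I can take $C_{\Psi_1}=C_{\bfalpha}^2/4$.

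\textbf{Statement 5.} Similarly, $|\sigma''|\le 1/(6\sqrt3)\le1$ and $\|\bfalpha_j\bfalpha_j^\top\|_{op}\le C_{\bfalpha}^2$, so $C_{\Psi_2}=C_{\bfalpha}^3$ works.

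\textbf{Statement 6.} Here $\nabla^2_{\xi}\log h=-\sigma(\xi)(1-\sigma(\xi))$, and this is not random. The function $\sigma(1-\sigma)$ is even and decreasing in $|\xi|$. It therefore lies in $[m_1,m_2]$ with $m_1=\sigma(K)(1-\sigma(K))>0$ and $m_2=1/4$.

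\textbf{Statement 7.} I bound $\|\nabla^2_{\bftheta}\log f\|_{op}\le m_2C_{\bfalpha}^2=:C_{op}$.

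\textbf{Statement 8.} Here $\Lambda_{T_r}(\bftheta)=\sum_s w_s(\bftheta)\bfalpha_{j_s}\bfalpha_{j_s}^\top$ with weights $w_s(\bftheta)=\sigma(1-\sigma)(\xi_{j_s})\in[m_1,m_2]$. Since $\hat\bftheta^{\mathrm{ML}}_{T_r}\in\bfTheta$, its weights also lie in $[m_1,m_2]$. So for every $s$ we have $w_s(\bftheta)\le (m_2/m_1)\,w_s(\hat\bftheta)$ and $w_s(\bftheta)\ge (m_1/m_2)\,w_s(\hat\bftheta)$. Each $\bfalpha\bfalpha^\top$ is positive semidefinite, so summing these termwise inequalities in the Loewner order and dividing by $T_r$ gives the claimed sandwich.

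I expect no serious obstacle. The only care needed is bookkeeping: every constant must depend only on $K$ and $C_{\bfalpha}$, and $\hat\bftheta^{\mathrm{ML}}$ must lie in $\bfTheta$, which holds by definition of the estimator. Assumption \ref{ass:information1} is not needed for these bounds. Its role is downstream, where Statement 8 transfers its eigenvalue lower bound to $\Lambda_{T_r}(\bftheta)$ uniformly in $\bftheta$, as required by Condition \hyperref[c:c3']{B.11}.
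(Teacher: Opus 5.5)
Your proposal is correct and takes essentially the same route as the paper. It bounds $\bfalpha_j^\top\bftheta-b_j$ uniformly using compactness of $\bfTheta$ and Assumption~\ref{ass:information3}, then bounds the explicit logistic log-likelihood, score, and Hessian directly, and gets Statement 8 by comparing the weights $\sigma(1-\sigma)$ term by term in the Loewner order. Your constants are just more explicit (for example $\log(1+e^{K})$, $C_{\bfalpha}^2/4$, $m_2=1/4$), and you are right that Assumption~\ref{ass:information1} plays no role in this lemma.
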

\begin{proof}
1. From $f_{\bftheta, j}(y^j)$ defined in \eqref{e:density}, $\log f_{\bftheta, j}(y^j)$ is given as
\begin{equation} \label{e:logdensity}
\log f_{\bftheta, j}(y^j) = y^j (\bfalpha_j^T \bftheta - b_j) - \log \Big( 1 + \exp( \bfalpha_j^T \bftheta - b_j )\Big).
\end{equation}
Under Assumptions \ref{ass:compact} and \ref{ass:information3}, we obtain 
\begin{equation*}
\sup_{j \in \mathcal{J}_{\infty}, \bftheta \in \bfTheta} |\bfalpha_j^T \bftheta - b_j| \le C_{\eta},
\end{equation*}
for some $C_{\eta}$. Applying this, we can check
\begin{equation*}
\sup_{j \in \mathcal{J}_{\infty}, \bftheta \in \bfTheta} | \log f_{\bftheta, j}(y^j) | \leq C_{\eta} + \log( 1 + \exp( C_\eta) ).
\end{equation*}
Letting $C_{\ell} = C_{\eta} + \log( 1 + \exp( C_\eta) )$, we prove the statement.

2. Following the log density defined in \eqref{e:logdensity}, taking derivative with respect to $\bftheta$, we obtain
\begin{equation} \label{e:logdenstiy_deriv}
\nabla_{\bftheta} \log f_{\bftheta,j}(y^j) = \Big( y^j - \frac{\exp(\bfalpha_j^T \bftheta - b_j)}{1 + \exp(\bfalpha_j^T \bftheta - b_j)} \Big) \bfalpha_j.
\end{equation}
Note that $\sup_{j \in \mathcal{J}_{\infty}, \bftheta \in \bfTheta} \Big| y^j - \frac{\exp(\bfalpha_j^T \bftheta - b_j)}{1 + \exp(\bfalpha_j^T \bftheta - b_j)} \Big| \le 1$, and $\sup_{j \in \mathcal{J}_{\infty}} \|\bfalpha_j \|_2 \leq C_{\alpha}$ from Assumption \ref{ass:information3}. Thus, 
\begin{equation*}
\sup_{j \in \mathcal{J}_{\infty}, \bftheta \in \bfTheta} \| \nabla_{\bftheta} \log f_{\bftheta, j}(y^j) \|_2 \le C_{\alpha}
\end{equation*}
holds and we prove the statement.

3. Note that $\log f_{\bftheta, j}(y^j)$ is continuous and differentiable function in $\bftheta$. Applying Mean Value Theorem and Cauchy-Schwartz in equality, we have for any $\bftheta_1, \bftheta_2 \in \bfTheta$,
\begin{equation*}
| \log f_{\bftheta_1, j}(y^j) - \log f_{\bftheta_2, j}(y^j) | \le \| \nabla_{\bftheta} \log f_{\tilde{\bftheta}, j}(y^j) \|_2 \| \bftheta_1 - \bftheta_2 \|_2,
\end{equation*}
for some $\tilde{\bftheta}$ between $\bftheta_1$ and $\bftheta_2$. From the previous statement we proved, we obtain $\| \nabla_{\bftheta} \log f_{\tilde{\bftheta}, j}(y^j) \|_2 \le C_{\alpha}$, proving the statement.

4. Let $g_j(\bftheta) = \frac{\exp(\bfalpha_j^T \bftheta - b_j)}{1 + \exp(\bfalpha_j^T \bftheta - b_j)}$. 
Then, $g_j(\bftheta)$ is continuous and differentiable with respect to $\bftheta$. Applying Mean Value Theorem with Assumptions \ref{ass:compact} and \ref{ass:information3}, $g_j(\bftheta)$ is Lipschitz function of $\bftheta$, with some constant $C_1$ not depending on $j \in \mathcal{J}_{\infty}$. For any $\bftheta_1, \bftheta_2 \in \bfTheta$, we can check
\begin{align*}
\| \nabla_{\bftheta} \log f_{\bftheta_1, j}(y^j) - \nabla_{\bftheta} \log f_{\bftheta_2, j}(y^j) \|_2 
&\leq C_1 \|\bftheta_1 - \bftheta_2 \|_2 \| \bfalpha_j \|_2
\\ &\le C_1 \sup_{j \in \mathcal{J}_{\infty}} \| \bfalpha_j \|_2 \|\bftheta_1 - \bftheta_2 \|_2.
\end{align*}
From Assumption \ref{ass:information3}, by letting $C_{\Psi_1} = C_1 C_{\alpha}$, we prove the statement.

5. The first derivative obtained in \eqref{e:logdenstiy_deriv}, is continuous and differentiable with respect to $\bftheta$. Then, the second derivative of $\log f_{\bftheta, j}(y^j)$ is given as
\begin{equation} \label{e:logdensity_deriv2}
\nabla_{\bftheta}^2 \log f_{\bftheta, j}(y^j) = - B_j( \bfalpha_j^T \bftheta) \bfalpha_j \bfalpha_j^T,
\end{equation}
where $B_j(\bfalpha_j^T \bftheta) = \frac{\exp(\bfalpha_j^T \bftheta - b_j)}{\bigl(1+\exp(\bfalpha_j^T \bftheta - b_j) \bigr)^2}$. From $B_j(\bfalpha_j^T \bftheta)$ is continuous and differentiable with respect to $\bftheta$, applying Mean Value Theorem combined with Assumptions \ref{ass:compact} and \ref{ass:information3}, $B_j(\bfalpha_j^T \bftheta)$ is Lipschitz function of $\bftheta$, with some constant $C_1$ not depending on $j \in \mathcal{J}_{\infty}$. Then, for any $\bftheta_1, \bftheta_2 \in \bfTheta$, we obtain
\begin{align*}
\| \nabla_{\bftheta}^2 \log f_{\bftheta_1, j}(y^j) - \nabla_{\bftheta}^2 \log f_{\bftheta_2, j}(y^j) \|_{op} &\leq C_1 \| \bftheta_1 - \bftheta_2 \|_2 \| \bfalpha_j \bfalpha_j^T \|_{op}
\\ &\le C_1 \sup_{j \in \mathcal{J}_{\infty}} \| \bfalpha_j \|_2^2 \| \bftheta_1 - \bftheta_2 \|_2.
\end{align*}
From Assumption \ref{ass:information3}, by defining $C_{\Psi_2} = C_1 C_{\alpha}^2$, we prove the statement.

6. Following $\bfxi_j = \bfalpha_j^T \bftheta$, $\log h_{\bfxi_j, j}(y^j)$ is defined as we can check 
\begin{equation} \label{e:density_reparam}
\log h_{\bfxi_j^{\ast}, j}(y^j) = y^j (\bfxi_j^{\ast} - b_j) - \log( 1 + \exp( \bfxi_j^{\ast} - b_j)).
\end{equation}
Then, the second derivative with respect to $\bfxi_j$ is 
\begin{equation*}
| \nabla_{\bfxi_j}^2 \log h_{\bfxi_j, j}(y_j)| = B_{j}(\bfxi_j).
\end{equation*}
By Assumption \ref{ass:information3} and the compactness of $\bfTheta$, there exist
constants $0 < m_1 \le m_2 < \infty$ such that
\begin{equation*}
m_1 \le B_j(\bfxi_j \bftheta) \le m_2,
\quad
\forall j\in\mathcal J_\infty,\ \forall \bftheta\in\bfTheta.
\end{equation*}
This proves the statement.

7. From \eqref{e:logdensity_deriv2}, we need to verify $\sup_{j \in \mathcal{J}_{\infty}, \bftheta \in \bfTheta} \| B_j(\bfalpha_j^T \bftheta) \bfalpha_j \bfalpha_j^T \|_{op} \leq C_{op}$. Since $\nabla_{\bftheta}^2 \log f_{\bftheta, j}(y_j)$ is rank one matrix, $\| B_j(\bfalpha_j^T \bftheta) \bfalpha_j \bfalpha_j^T \|_{op} = \operatorname{tr}(B_j(\bfalpha_j^T \bftheta) \bfalpha_j \bfalpha_j^T)$. From the $\sup_{j \in \mathcal{J}_{\infty}, \bftheta \in \bfTheta} B_j( \bfxi_j \bftheta)\le m_2$ and Assumption \ref{ass:information3} we obtain
\begin{equation*}
\sup_{j \in \mathcal{J}_{\infty}, \bftheta \in \bfTheta} \operatorname{tr}(B_j(\bfalpha_j^T \bftheta) \bfalpha_j \bfalpha_j^T) \le m_2 C_{\alpha}^2.
\end{equation*}
By defining $C_{op} = m_2 C_{\alpha}^2$, we prove the statement.

8. Recall that
$\Lambda_{T_r}\bigl(\bftheta\bigr)
 = \sum_{s=1}^{T_r}\mathcal I_{j_s}(\bftheta)$
with
$\mathcal I_j(\bftheta) = B_j(\bfalpha_j^T \bftheta)\,
\bfalpha_j\bfalpha_j^T$.
Then
\begin{equation*}
\frac{1}{T_r}\Lambda_{T_r}\bigl(\bftheta \bigr)
= \frac{1}{T_r}\sum_{s=1}^{T_r}
\mathcal I_{j_s}(\hat{\bftheta}^{\mathrm{ML}}_{T_r})
\cdot
\frac{B_{j_s}(\bfalpha_{j_s}^T \bftheta)}{B_{j_s}(\bfalpha_{j_s}^T \hat{\bftheta}^{\mathrm{ML}}_{T_r})}.
\end{equation*}
By the uniform bounds on $B_j$ we have
\begin{equation*}
\frac{m_1}{m_2}
\le
\frac{B_{j_s}(\bfalpha_{j_s}^T \bftheta)}
{B_{j_s}(\bfalpha_{j_s}^T \hat{\bftheta}^{\mathrm{ML}}_{T_r})}
\le
\frac{m_2}{m_1},
\quad s=1,\dots,T_r.
\end{equation*}
Since $\mathcal I_{j_s}(\hat{\bftheta}^{\mathrm{ML}}_{T_r})$ is positive semidefinite,
summing over $s$ yields
\begin{equation*}
\frac{m_1}{m_2}\,\frac{1}{T_r}\Lambda_{T_r}\bigl(\hat{\bftheta}^{\mathrm{ML}}_{T_r}\bigr)
\preceq
\frac{1}{T_r}\Lambda_{T_r}\bigl(\bftheta\bigr)
\preceq
\frac{m_2}{m_1}\,\frac{1}{T_r}\Lambda_{T_r}\bigl( \hat{\bftheta}^{\mathrm{ML}}_{T_r} \bigr).
\end{equation*}
This completes the proof.
\end{proof}

The next lemma extends Lemma 15 from \citeA{li2025globally}.

\begin{lemma} \label{lemma:lemma4}
Let $\{Y_s\}_{s=1}^{\infty}$ be a sequence of random variables and $\{\mathcal{F}_{s}\}_{s=1}^{\infty}$ be an increasing sequence of $\sigma$-fields with $Y_s$ measurable with respect to $\mathcal{F}_s$ for all $s$. Let $\{j_s\}_{s=1}^{\infty}$ denote a sequence of discrete random variables, where each variable takes value from the set $\mathcal{J}_{\infty} = \{1,2,3, \cdots \}$. Let $Y^1,Y^2, \cdots$ be a sequence of random variables such that $\sup_{j \in \mathcal{J}_{\infty}} \mathbb{E}|Y^j|^{2 + \eta} \le C_{\eta} <\infty$ for some $C_{\eta} > 0$ and $\eta > 0$. If the conditional distribution function of $(Y_s \mid \mathcal{F}_{s-1}, j_s = j)$ is the same as the distribution function of $Y^j$ with probability $1$, then
\begin{equation*}
\frac{1}{r} \sum_{i=1}^{r} \Big\{ Y_{i} - \mathbb{E}[Y_i \mid \mathcal{F}_{i-1}] \Big\} \to 0 \quad \text{a.s}.
\end{equation*}
\end{lemma}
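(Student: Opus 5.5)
We reduce the statement to a strong law of large numbers for martingale difference sequences. Define $D_i = Y_i - \mathbb{E}[Y_i \mid \mathcal{F}_{i-1}]$, so that $\{D_i, \mathcal{F}_i\}$ is a martingale difference sequence. We must show $\frac{1}{r}\sum_{i=1}^r D_i \to 0$ almost surely.

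The standard tool is the Kronecker lemma combined with the martingale convergence theorem for $L^2$-bounded martingales. Concretely, set $S_r = \sum_{i=1}^r D_i/i$. This is a martingale, and its increments are orthogonal, so
\begin{equation*}
\sup_r \mathbb{E}[S_r^2] = \sum_{i=1}^\infty \frac{\mathbb{E}[D_i^2]}{i^2}.
\end{equation*}
If this sum is finite, $S_r$ converges almost surely. Kronecker's lemma, applied with weights $b_i = i \uparrow \infty$, then yields $\frac{1}{r}\sum_{i=1}^r D_i \to 0$ almost surely.

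The main step, and the only place where the infinite pool $\mathcal{J}_\infty$ matters compared with Lemma 15 of \citeA{li2025globally}, is a bound on $\mathbb{E}[D_i^2]$ that is uniform in $i$. Here we use the hypothesis that, given $\mathcal{F}_{i-1}$ and $j_i=j$, the conditional law of $Y_i$ equals the law of $Y^j$. This gives
\begin{equation*}
\mathbb{E}[Y_i^2 \mid \mathcal{F}_{i-1}] = \sum_{j\in\mathcal{J}_\infty} \mathbf{1}_{(j_i=j)}\, \mathbb{E}|Y^j|^2 \le \sup_{j} \mathbb{E}|Y^j|^2 .
\end{equation*}
Note that $j_i$ should be $\mathcal{F}_{i-1}$-measurable, as it is in the paper's setting; otherwise we condition on $\sigma(\mathcal{F}_{i-1}, j_i)$ and use the tower property. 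By Lyapunov's inequality,
\begin{equation*}
\sup_j \mathbb{E}|Y^j|^2 \le \sup_j (\mathbb{E}|Y^j|^{2+\eta})^{2/(2+\eta)} \le C_\eta^{2/(2+\eta)}.
\end{equation*}
Since a conditional variance is at most the conditional second moment, $\mathbb{E}[D_i^2] \le \mathbb{E}[Y_i^2] \le C_\eta^{2/(2+\eta)}$ for every $i$. Hence $\sum_i \mathbb{E}[D_i^2]/i^2 \le C_\eta^{2/(2+\eta)}\pi^2/6 < \infty$, which completes the argument.

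The only step needing care is the measure-theoretic justification of the conditional moment identity above. This is where the assumption of a countable pool and a uniform moment bound over $j$ replaces the finiteness of $\mathcal{J}$ used in \citeA{li2025globally}. The extra $\eta>0$ is not needed beyond a uniform second-moment bound, so it serves only as a convenient sufficient condition.
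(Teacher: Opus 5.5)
Your proof is correct, and it is more direct than the paper's. The paper follows the Kolmogorov truncation template. It sets $X_s = Y_s\mathbf{1}_{(|Y_s|\le s)}$ and bounds $\sum_s s^{-2}\mathbb{E}[(X_s-\mathbb{E}[X_s\mid\mathcal{F}_{s-1}])^2]$ using the uniform tail estimate $\sup_j P(|Y^j|>x)\le C_\eta x^{-(2+\eta)}$. It then applies a martingale strong law (Theorem 2.15 of \citeA{hall2014martingale}) to the truncated differences. Next it uses Borel--Cantelli ($\sum_s P(|Y_s|>s)<\infty$) to replace $X_s$ by $Y_s$. Finally it removes the drift correction via $\mathbb{E}[|Y_s|\mathbf{1}_{(|Y_s|>s)}\mid\mathcal{F}_{s-1}]\le s^{-\eta}\sup_j\mathbb{E}|Y^j|^{1+\eta}$, whose Ces\`aro mean vanishes.

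You apply the same square-summable martingale strong law, which in your case is $L^2$ convergence plus Kronecker, directly to the untruncated differences $D_i$. The uniform bound $\mathbb{E}[D_i^2]\le\sup_j\mathbb{E}|Y^j|^2\le C_\eta^{2/(2+\eta)}$ (Lyapunov) makes the truncation, the Borel--Cantelli step and the tail-correction step unnecessary. Truncation is what one needs when only a first-moment type hypothesis is available, for example domination of all $Y^j$ by a single integrable variable as in the classical strong law. Under the $(2+\eta)$-moment hypothesis actually assumed, it buys nothing. Your version is shorter and shows, as you note, that a uniform second moment already suffices.

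One minor presentational point: the lemma does not assume $j_i$ is $\mathcal{F}_{i-1}$-measurable, so your displayed identity should be written for $\mathbb{E}[Y_i^2\mid\mathcal{F}_{i-1}, j_i]$. The paper handles the same issue by averaging over $P(j_s=j\mid\mathcal{F}_{s-1})$. Since you only use the unconditional consequence $\mathbb{E}[Y_i^2]\le\sup_j\mathbb{E}|Y^j|^2$, nothing else changes.
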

\begin{proof}
From Markov's inequality for any $x > 0$, 
\begin{equation*}
P( |Y^j| \ge x) \le \frac{\mathbb{E}|Y^j|^{2+\eta}}{x^{2 + \eta}} \le \sup_{j \in \mathcal{J}_{\infty}} \frac{\mathbb{E}|Y^j|^{2+\eta}}{x^{2 + \eta}} \le \frac{C_{\eta}}{x^{2+\eta}} < \infty.
\end{equation*}
Define $X_s = Y_s \mathbf{1}_{(|Y_s| \leq s)}$. First, observe that
\begin{align*}
P(|Y_s| > s) &= \mathbb{E}[ P(|Y_s| > s \mid \mathcal{F}_{s-1})]
\\ &= \mathbb{E}[ \sum_{j \in \mathcal{J}_{\infty}} P(|Y_s| > s \mid \mathcal{F}_{s-1}, j_s = j) \cdot P(j_s = j \mid \mathcal{F}_{s-1}) ]
\\ &\le \sup_{j \in \mathcal{J}_{\infty}} P( |Y^j| > s )
\\ &\le \frac{C_{\eta}}{s^{2+\eta}}
\\ &< \infty.
\end{align*}
Also, we have
\begin{align*}
\sum_{s=1}^{\infty} \frac{1}{s^2} \mathbb{E}[ (X_s - \mathbb{E}[X_s \mid \mathcal{F}_{s-1}])^2 ] 
&\le 2 \sum_{s=1}^{\infty} \frac{1}{s^2} \int_{0 < x \le s} x P(|Y_s| > x) \, dx
\\ &\le 2 \sum_{s=1}^{\infty} \frac{1}{s^2} \Big( \int_{0}^{1} x \sup_{j \in \mathcal{J}_{\infty}} P( |Y^j| > x) \, dx + \int_{1}^{s} x \frac{C_{\eta}}{x^{2+\eta}} \, dx  \Big)
\\ &\le 2 \sum_{s=1}^{\infty} \frac{1}{s^2} \Big( \frac{1}{2} + \int_{1}^{s} \frac{C_{\eta}}{x^{1+\eta}} \, dx \Big)
\\ &= 2 \Big( \frac{1}{2} + \frac{C_{\eta}}{\eta} \Big) \sum_{s=1}^{\infty} \frac{1}{s^2} 
\\ &< \infty.
\end{align*}
From the Theorem 2.15 of \citeA{hall2014martingale}, this implies $\frac{1}{r} \sum_{s=1}^{r} ( X_s - \mathbb{E}[X_s \mid \mathcal{F}_{s-1}]) \to 0$ almost surely. Combined with 
\begin{equation*}
\sum_{s=1}^{\infty} P( Y_s \neq X_s ) = \sum_{s=1}^{\infty} P( |Y_s| > s ) \le \sum_{s=1}^{\infty} \frac{C_{\eta}}{s^{2+\eta}} < \infty,
\end{equation*}
we obtain 
\begin{equation*}
\frac{1}{r} \sum_{s=1}^{r} (Y_s - \mathbb{E}[X_s \mid \mathcal{F}_{s-1}]) \to 0 \quad \text{a.s}.
\end{equation*}
Further check that 
\begin{align*}
\mathbb{E}[ |Y_s| \mathbf{1}_{(|Y_s| > s)} \mid \mathcal{F}_{s-1} ] 
&= \mathbb{E}[ \mathbb{E}[ |Y_s| \mathbf{1}_{(|Y_s| > s)} \mid \mathcal{F}_{s-1}, j_s ] \mid \mathcal{F}_{s-1}]
\\ &= \mathbb{E}[ \sum_{j \in \mathcal{J}_{\infty}} \mathbf{1}_{(j_s = j)} \cdot \mathbb{E}[ |Y_s| \mathbf{1}_{(|Y_s| > s)} \mid \mathcal{F}_{s-1}, j_s = j] \mid \mathcal{F}_{s-1} ]
\\ &\le \mathbb{E}[ \sup_{j \in \mathcal{J}_{\infty}} \mathbb{E}[ |Y^j| \mathbf{1}_{(|Y^j| > s)}] \mid \mathcal{F}_{s-1} ]
\\ &= \sup_{j \in \mathcal{J}_{\infty}} \mathbb{E}[ |Y^j| \mathbf{1}_{(|Y^j| > s)}]
\\ &\le \frac{1}{s^{\eta}} \sup_{j \in \mathcal{J}_{\infty}} \mathbb{E}[ |Y^j|^{1 + \eta} ].
\end{align*}
This implies 
\begin{align*}
\frac{1}{r} \sum_{s=1}^{r} | \mathbb{E}[ Y_s - X_s \mid \mathcal{F}_{s-1} ] 
&\le \frac{1}{r} \sum_{s=1}^{r} \mathbb{E}[ |Y_s| \mathbf{1}_{(|Y_s| > s)} \mid \mathcal{F}_{s-1}]]
\\ &\le \frac{1}{r} \sum_{s=1}^{r} \frac{\sup_{j \in \mathcal{J}_{\infty}} \mathbb{E}[ |Y^j|^{1+\eta}] }{s^{\eta}}, 
\end{align*}
and by letting $r \to \infty$, we prove the claim.
\end{proof}

We also provide a modified version of Lemma 49 from \citeA{li2025globally} as follows.  

\begin{lemma} 
\label{lemma:lemma5}
Let $\mathcal{H}^j = \{ \log f_{\bftheta, j}( \cdot ): \bftheta \in \bfTheta \}$, $j \in \mathcal{J}_{\infty}$ be collections of measurable functions with a $\mathbb{P}_{\ast}$ integrable uniform envelope function. That is, for all $\bftheta \in \bfTheta$ and some $\eta > 0$,
\begin{equation*}
\sup_{j \in \mathcal{J}_{\infty}} | \log f_{\bftheta, j}(y)| \leq F(y)
\quad \text{and} \quad \sup_{j \in \mathcal{J}_{\infty}} \mathbb{E}_{Y \sim f_{\bftheta^{\ast}, j}}[|F(Y)|^{2+\eta}] < \infty. 
\end{equation*}
Also, functions in $\mathcal{H}^j$ have a uniform Lipschitz condition in $\bftheta$. Specifically, there exists $\Psi(y)$ such that for $\bftheta_1, \bftheta_2 \in \bfTheta$,
\begin{equation*}
| \log f_{\bftheta_1, j}(y) - \log f_{\bftheta_2, j}(y) | \leq \Psi(y) \| \bftheta_1 - \bftheta_2 \| \quad \text{and} \quad \sup_{j \in \mathcal{J}_{\infty}} \mathbb{E}_{Y \sim f_{\bftheta^{\ast}, j}}[|\Psi(Y)|] < \infty.
\end{equation*}
If $\bfTheta$ is compact and mapping $\bftheta \to \log f_{\bftheta, j}(y^j)$ is continuous for every $y^j$ and $j \in \mathcal{J}_{\infty}$, then
\begin{equation*}
\mathbb{P}_{\ast} \Big\{ \lim_{r \to \infty} \sup_{\bftheta \in \bfTheta} | \ell_{T_r}(\bftheta : \mathbf{j}_{T_r}) - M(\bftheta : \bar{\bfpi}_{T_r})| = 0 \Big\} = 1.
\end{equation*}
\end{lemma}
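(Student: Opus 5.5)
The plan is a finite bracketing/covering argument combined with the martingale strong law of Lemma \ref{lemma:lemma4}. The uniform envelope $F$ and the uniform Lipschitz function $\Psi$ (both independent of $j$) replace the finiteness of the item-type set used in \citeA{li2025globally}.

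First, I would reduce the supremum to finitely many points. Fix $\epsilon>0$. Since $\bfTheta$ is compact, choose a finite $\epsilon$-net $\{\bftheta^{(1)},\ldots,\bftheta^{(K)}\}\subset\bfTheta$. For any $\bftheta$, take $k$ with $\|\bftheta-\bftheta^{(k)}\|\le\epsilon$. Then
\begin{equation*}
|\ell_{T_r}(\bftheta;\mathbf{j}_{T_r})-M(\bftheta;\bar{\bfpi}_{T_r})|
\le |\ell_{T_r}(\bftheta^{(k)};\mathbf{j}_{T_r})-M(\bftheta^{(k)};\bar{\bfpi}_{T_r})|
+\frac{\epsilon}{T_r}\sum_{s=1}^{T_r}\Big\{\Psi(Y_s)+\mathbb{E}[\Psi(Y_s)\mid\mathcal{F}_{s-1}]\Big\}.
\end{equation*}
This uses the Lipschitz bound for $\ell_{T_r}$, and the same bound inside the conditional expectation defining $M$ in Condition \hyperref[c:c4']{B.4'}. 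The key point is that $\Psi$ does not depend on $j_s$, so the bound holds uniformly over adaptively chosen items.

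Second, I would control each of the finitely many pointwise terms. For fixed $\bftheta^{(k)}$, apply Lemma \ref{lemma:lemma4} to the variables $Z_s=\log f_{\bftheta^{(k)},j_s}(Y_s)$. Conditionally on $\mathcal{F}_{s-1}$ and $j_s=j$, $Z_s$ is distributed as $\log f_{\bftheta^{(k)},j}(Y^j)$ with $Y^j\sim f_{\bftheta^\ast,j}$. The envelope gives $\sup_j\mathbb{E}|Z^j|^{2+\eta}\le\sup_j\mathbb{E}F^{2+\eta}<\infty$, so $\frac1{T_r}\sum_s(Z_s-\mathbb{E}[Z_s\mid\mathcal{F}_{s-1}])\to0$ almost surely along $T_r$. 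Since $T_r$ is a nondecreasing subsequence tending to infinity, the full-sequence limit in Lemma \ref{lemma:lemma4} transfers. A finite union of null sets is null, so all $K$ pointwise terms vanish simultaneously almost surely.

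Third, I would bound the Lipschitz remainder. By the definition of $M$ and the bound $\sup_j\mathbb{E}\Psi(Y^j)\le C_\Psi$, the conditional-expectation part is at most $\epsilon C_\Psi$ deterministically. For the empirical part I would again use Lemma \ref{lemma:lemma4}, writing $\frac1{T_r}\sum\Psi(Y_s)$ as its compensator plus a vanishing martingale average.

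This is the main obstacle: Lemma \ref{lemma:lemma4} as stated needs a $(2+\eta)$-moment, while the hypothesis only gives $\sup_j\mathbb{E}|\Psi|<\infty$. My first attempt would sidestep this. In the M2PL application $\Psi$ is the constant $C_\alpha$ from Lemma \ref{lemma:lemma3}, so the bound is immediate. Failing that, I would truncate $\Psi$ at level $L$. The bounded part is handled by Lemma \ref{lemma:lemma4}. The tail $\Psi\mathbf{1}_{(\Psi>L)}$ would be handled by a uniform integrability argument, which needs a slightly stronger uniform moment on $\Psi$, or a Chow-type martingale SLLN for nonnegative variables with uniformly bounded conditional means.

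Either way, almost surely
\begin{equation*}
\limsup_r\sup_{\bftheta}|\ell_{T_r}-M|\le 2\epsilon C_\Psi .
\end{equation*}
Intersecting these events over $\epsilon=1/m$, $m\in\mathbb{Z}_+$, gives the claim with probability one.
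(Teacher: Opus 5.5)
Your reduction to a finite net and your handling of the $K$ pointwise terms via Lemma~\ref{lemma:lemma4} (using the envelope $F$) are fine. The gap is in your third step. Because you compare $\bftheta$ with its net point \emph{pathwise}, your remainder contains the empirical average $\frac{1}{T_r}\sum_{s=1}^{T_r}\Psi(Y_s)$, which you need to be almost surely bounded. The lemma assumes only $\sup_{j}\mathbb{E}_{Y\sim f_{\bftheta^{\ast},j}}|\Psi(Y)|<\infty$, and that does not give this. Lemma~\ref{lemma:lemma4} needs higher moments. Bounded conditional means of nonnegative variables also do not, by any Chow-type law, yield bounded averages. For example, take independent $X_s$ equal to $s\log s$ with probability $1/(s\log s)$ and $0$ otherwise. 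Then $\mathbb{E}X_s=1$, yet by the second Borel--Cantelli lemma $\limsup_n n^{-1}\sum_{s\le n}X_s=\infty$ almost surely. With infinitely many items in $\mathcal{J}_{\infty}$, the conditional laws of $\Psi(Y_s)$ are free to drift like this. So, as you partly anticipate, your argument proves the lemma only under an extra hypothesis on $\Psi$: either $\Psi$ bounded, or a uniform higher moment. You are right that $\Psi\equiv C_\alpha$ in the M2PL setting, so your proof does cover how the lemma is used in Lemma~\ref{lemma:lemma6}. That fact is not among the lemma's stated hypotheses, though.

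The missing idea is bracketing rather than a pathwise Lipschitz comparison, and this is what the paper does. Take a $\delta$-cover with centers $\bftheta_1,\ldots,\bftheta_m$ and define
\[
u_k^j(y)=\sup_{\|\bftheta-\bftheta_k\|<\delta}\log f_{\bftheta,j}(y),\qquad l_k^j(y)=\inf_{\|\bftheta-\bftheta_k\|<\delta}\log f_{\bftheta,j}(y).
\]
These are measurable, since by continuity the supremum and infimum may be taken over a countable dense set. Both are dominated by $F$, so Lemma~\ref{lemma:lemma4} applies to $u_k^{j_s}(Y_s)$ and $l_k^{j_s}(Y_s)$. The Lipschitz condition enters only through expectations, via $\mathbb{E}_{Y\sim f_{\bftheta^{\ast},j}}[u_k^j(Y)-l_k^j(Y)]\le 2\delta C_\Psi$ with $C_\Psi=\sup_j\mathbb{E}_{Y\sim f_{\bftheta^{\ast},j}}|\Psi(Y)|$. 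Using $\log f_{\bftheta,j_s}\le u_k^{j_s}$ together with $\mathbb{E}[\log f_{\bftheta,j_s}(Y_s)\mid\mathcal{F}_{s-1}]\ge\mathbb{E}[l_k^{j_s}(Y_s)\mid\mathcal{F}_{s-1}]$, you get, for every $\bftheta$ in the $k$-th ball,
\begin{equation*}
\ell_{T_r}(\bftheta;\mathbf{j}_{T_r})-M(\bftheta;\bar{\bfpi}_{T_r})
\le \frac{1}{T_r}\sum_{s=1}^{T_r}\Big\{u_k^{j_s}(Y_s)-\mathbb{E}\big[u_k^{j_s}(Y_s)\mid\mathcal{F}_{s-1}\big]\Big\}+2\delta C_\Psi .
\end{equation*}
The symmetric lower bound follows with $l_k$ in place of $u_k$. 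No empirical average of $\Psi$ appears, so the first-moment hypothesis suffices and the rest of your argument goes through unchanged.
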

\begin{proof}
Consider a ball $B(\bftheta, \delta) = \{\bftheta^{'} \in \bfTheta: \| \bftheta^{'} - \bftheta \|_2 < \delta \}$. Define 
\begin{equation*}
u_{B(\bftheta^{'}, \delta)}^j(y) = \sup_{\|\bftheta - \bftheta^{'}\|_2 < \delta} \log f_{\bftheta, j}(y), 
\end{equation*}
and
\begin{equation*}
l_{B(\bftheta^{'}, \delta)}^j(y) = \inf_{\|\bftheta - \bftheta^{'}\|_2 < \delta} \log f_{\bftheta, j}(y).
\end{equation*}
Then, from the uniform Lipschitz condition, for any $\epsilon > 0$, by choosing $\delta = \frac{\epsilon}{2 \sup_{j \in \mathcal{J}_{\infty}, \bftheta \in \bfTheta} \mathbb{E}[| \Psi(Y)|]}$, combined with the existence of the integrable uniform envelope $F(Y)$, we obtain
\begin{equation*}
\sup_{j \in \mathcal{J}_{\infty}} \mathbb{E}_{Y \sim f_{\bftheta^{\ast}, j}} \Big[ u_{B(\bftheta^{'}, \delta)}^j(Y) - l_{B(\bftheta^{'}, \delta)}^j(Y) \Big] < \epsilon.
\end{equation*}
From the compactness of $\bfTheta$, there exists $(\bftheta_1, \delta), \cdots, (\bftheta_m, \delta)$ such that for each $\bftheta \in \bfTheta$, choose $k(\bftheta)$ such that $\bftheta \in B(\bftheta_{k(\bftheta)}, \delta)$. Then,
\begin{equation*}
l_{B(\bftheta_{k(\bftheta)}, \delta)}^j(y) \le \log f_{\bftheta, j}(y) 
\le u_{B(\bftheta_{k(\bftheta)}, \delta)}^j(y),
\end{equation*}
for some $1 \le k \le m$. This indicates the bracketing numbers $N_{[]}(\epsilon, \mathcal{H}^j, L_1(\mathbb{P}_{\ast,j})) < \infty$ for all $\epsilon > 0$, and $j \in \mathcal{J}_{\infty}$. Thus, for each $\bftheta \in \bfTheta$, we can choose finitely many $\epsilon$-brackets $[l_{B(\bftheta_k, \delta_k)}^j(y), u_{B(\bftheta_k, \delta_k)}^j(y)]$ whose union contains $\mathcal{H}^j$. Also, for each $\bftheta \in \bfTheta$, and each $j \in \mathcal{J}_{\infty}$, 
\begin{equation*}
\mathbb{E}_{Y \sim f_{\bftheta^{\ast}, j}}[ l_{B(\bftheta_{k(\bftheta)}, \delta_k)}^j(Y)] \leq \mathbb{E}_{Y \sim f_{\bftheta^{\ast},j}}[ \log f_{\bftheta, j}(Y)] \leq \mathbb{E}_{Y \sim f_{\bftheta^{\ast}, j}}[ u_{B(\bftheta_{k(\bftheta)}, \delta_k)}^j(Y)].
\end{equation*}
Now, observe that for fixed $\bftheta \in \bfTheta$,
\begin{align*}
\ell_{T_r}(\bftheta; \mathbf{j}_{T_r}) - M(\bftheta; \bar{\bfpi}_{T_r}) 
&= \frac{1}{T_r} \sum_{s=1}^{T_r} \Big\{ 
\log f_{\bftheta, j_s}(Y_s) - \mathbb{E}[ \log f_{\bftheta, j_s}(Y_s) \mid \mathcal{F}_{s-1}] \Big\}
\\ &\le \frac{1}{T_r} \sum_{s=1}^{T_r} \Big\{ u_{B(\bftheta_k, \delta_k)}^{j_s}(Y_s) - \mathbb{E}[ u_{B(\bftheta_k, \delta_k)}^{j_s}(Y_s) \mid \mathcal{F}_{s-1}]  \Big\} + \epsilon.
\end{align*}
Taking $\sup_{\bftheta \in \bfTheta}$ on both sides, we obtain
\begin{equation*}
\sup_{\bftheta \in \bfTheta}( \ell_{T_r}(\bftheta; \mathbf{j}_{T_r}) - M(\bftheta; \bar{\bfpi}_{T_r}) ) 
\leq \max_{k \in \{1, \cdots, m\}} \frac{1}{T_r} \sum_{s=1}^{T_r} \Big\{ u_{B(\bftheta_k, \delta_k)}^{j_s}(Y_s) - \mathbb{E}[ u_{B(\bftheta_k, \delta_k)}^{j_s}(Y_s) \mid \mathcal{F}_{s-1}]  \Big\} + \epsilon.
\end{equation*}
Note that $\sup_{j \in \mathcal{J}_{\infty}} | u_{B(\bftheta_k, \delta_k)}^{j}(y)| \leq F(y)$, $F(Y)$ has bounded $(2 + \eta)$ moment, from Lemma \ref{lemma:lemma4} $\frac{1}{T_r} \sum_{s=1}^{T_r} \Big\{ u_{B(\bftheta_k, \delta_k)}^{j_s}(Y_s) - \mathbb{E}[ u_{B(\bftheta_k, \delta_k)}^{j_s}(Y_s) \mid \mathcal{F}_{s-1}]  \Big\}$ almost surely converges to $0$. This yields 
\begin{equation*}
\limsup_{r \to \infty} \sup_{\bftheta \in \bfTheta}( \ell_{T_r}(\bftheta; \mathbf{j}_{T_r}) - M(\bftheta; \bar{\bfpi}_{T_r}) ) \leq \epsilon \quad \mathbb{P}_{\ast} \, \text{a.s},
\end{equation*}
and a similar argument gives 
\begin{equation*}
\liminf_{r \to \infty} \inf_{\bftheta \in \bfTheta}( \ell_{T_r}(\bftheta; \mathbf{j}_{T_r}) - M(\bftheta; \bar{\bfpi}_{T_r}) ) \geq -\epsilon \quad \mathbb{P}_{\ast} \, \text{a.s}.
\end{equation*}
Taking $\epsilon \to 0$, we prove the statement.
\end{proof}

Using results from Lemma \ref{lemma:lemma3}, we show under Assumption \ref{ass:r2}, the modified regularity Conditions  \hyperref[c:c2']{B.10.},
\hyperref[c:c3']{B.11.}, \hyperref[c:c4']{B.12.}, \hyperref[c:c6a']{B.13.}, \hyperref[c:c7a']{B.14.}, \hyperref[c:c7b']{B.15.}, and regularity Conditions \hyperref[c:c1]{B.1.}, \hyperref[c:c5]{B.5.} are satisfied. Note that we do not require Condition \hyperref[c:c6b]{B.8.} to hold under Assumption \ref{ass:r2}.

\begin{lemma} \label{lemma:lemma6}
Suppose Assumptions from \ref{ass:r2} -- \ref{ass:information3} hold. Under model \eqref{eq:model}, Conditions \hyperref[c:c1]{B.1.}, \hyperref[c:c2']{B.10.}, \hyperref[c:c3']{B.11.}, \hyperref[c:c4']{B.12.}, \hyperref[c:c5]{B.5.}, \hyperref[c:c6a']{B.13.}, \hyperref[c:c7a']{B.14.}, \hyperref[c:c7b']{B.15.} are satisfied.
\end{lemma}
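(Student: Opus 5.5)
The plan is to verify the conditions one at a time, relying mainly on the uniform bounds of Lemma \ref{lemma:lemma3}. All of these bounds come from compactness of $\bfTheta$ (Assumption \ref{ass:compact}) and uniform boundedness of $(\bfalpha_j,b_j)$ (Assumption \ref{ass:information3}). Condition \hyperref[c:c1]{B.1.} is exactly Assumption \ref{ass:compact}. Condition \hyperref[c:c5]{B.5.} is Lemma \ref{lemma:lemma1}, which holds under Assumption \ref{ass:r2}.

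For Condition \hyperref[c:c2']{B.10.}, the support $\{0,1\}$ of $f_{\bftheta,j}$ does not depend on $\bftheta$. The Lipschitz bounds with constants $C_{\Psi_1},C_{\Psi_2}$ are Statements 4 and 5 of Lemma \ref{lemma:lemma3}. The two supremum moment bounds follow from the deterministic bounds $\|\nabla_{\bftheta}\log f\|_2\le C_\alpha$ (Statement 2) and $\|\nabla^2_{\bftheta}\log f\|_{op}\le C_{op}$ (Statement 7).

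For Condition \hyperref[c:c3']{B.11.}, the information identity $\mathcal I_j=\mathbb E[\nabla\log f\nabla\log f^\top]=-\mathbb E[\nabla^2\log f]$ comes from a direct Bernoulli computation: both sides equal $p_j(1-p_j)\bfalpha_j\bfalpha_j^\top$. Continuous differentiability is clear because $B_j$ is smooth. The eigenvalue lower bound uniform in $\bftheta$ follows by combining Assumption \ref{ass:information1} with Statement 8 of Lemma \ref{lemma:lemma3}, which gives
\[
\lambda_{\min}\Big(\tfrac1{T_r}\Lambda_{T_r}(\bftheta)\Big)\ge \tfrac{m_1}{m_2}\,c =: c_0 \quad\text{for } r\ge r_0.
\]

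Condition \hyperref[c:c4']{B.12.} follows from Lemma \ref{lemma:lemma5}. We take the constant envelope $F\equiv C_\ell$ from Statement 1 of Lemma \ref{lemma:lemma3}, which has all moments. We take the constant $\Psi\equiv C_\alpha$ from Statement 3. Continuity of $\bftheta\mapsto\log f_{\bftheta,j}(y)$ is evident from \eqref{e:logdensity}.

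For Condition \hyperref[c:c6a']{B.13.}, we take $h_{\xi,j}(y)=\operatorname{logit}^{-1}(\xi-b_j)^y(1-\operatorname{logit}^{-1}(\xi-b_j))^{1-y}$. Its Fisher information in $\xi$ equals $B_j(\xi)\ge m_1>0$ by Statement 6.

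For Condition \hyperref[c:c7a']{B.14.}, we apply a second-order Taylor expansion of $\xi\mapsto \mathrm{D}_{\mathrm{KL}}(h_{\xi_j^*,j}\|h_{\xi,j})$ around $\xi_j^*$. The value and the first derivative vanish there. The second derivative equals $B_j(\tilde\xi)$, which is at least $m_1$ on the bounded range of $\xi=\bfalpha_j^\top\bftheta$. This gives $\mathrm{D}_{\mathrm{KL}}\ge \tfrac{m_1}{2}(\xi_j^*-\xi)^2$, so we may take $C_{\bfxi}=m_1/2$ uniformly in $j$.

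For Condition \hyperref[c:c7b']{B.15.}, we use $(\bftheta-\bftheta^*)^\top\mathcal I_j(\bftheta^*)(\bftheta-\bftheta^*)=B_j(\xi_j^*)(\xi_j-\xi_j^*)^2\le m_2(\xi_j-\xi_j^*)^2$. Combining this with B.14 gives, term by term, $\mathrm{D}_{\mathrm{KL}}(f_{\bftheta^*,j}\|f_{\bftheta,j})\ge \tfrac{m_1}{2m_2}(\bftheta-\bftheta^*)^\top\mathcal I_j(\bftheta^*)(\bftheta-\bftheta^*)$. Summing over any $A_r$ with weights $1/T_r$ yields the condition with $\tilde C_{\mathrm{KL}}=m_1/(2m_2)$.

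The main obstacle is making sure every constant is uniform over the infinite pool $\mathcal J_\infty$. In particular, $m_1>0$ requires $\sup_j|\bfalpha_j^\top\bftheta-b_j|<\infty$ over all $j$ and all $\bftheta\in\bfTheta$, and this is precisely where Assumption \ref{ass:information3} enters. A secondary delicacy is that the eigenvalue bound in B.11 must hold for all $\bftheta$, whereas Assumption \ref{ass:information1} controls only the value at $\hat{\bftheta}^{\mathrm{ML}}_{T_r}$. Statement 8 of Lemma \ref{lemma:lemma3} is what transfers the bound from $\hat{\bftheta}^{\mathrm{ML}}_{T_r}$ to arbitrary $\bftheta$.
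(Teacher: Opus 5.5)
Your proposal is correct and matches the paper's proof essentially step for step: Conditions B.1 and B.5 come from Assumption~\ref{ass:compact} and Lemma~\ref{lemma:lemma1}, and B.10 from Statements 2, 4, 5 and 7 of Lemma~\ref{lemma:lemma3}. B.11 follows from Assumption~\ref{ass:information1} with the sandwich in Statement 8 (giving $c_0 = (m_1/m_2)c$), B.12 from Lemma~\ref{lemma:lemma5} with Statements 1 and 3, and B.13--B.15 with $C_{\bfxi} = m_1/2$ and $\tilde C_{\mathrm{KL}} = C_{\bfxi}/m_2$; the only difference is that you carry out the Bernoulli information identity and the Taylor-expansion bound on the Kullback--Leibler divergence explicitly, where the paper defers to Corollary 11 and Lemma 25 of \citeA{li2025globally}.
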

\begin{proof}
First, the regularity Conditions \hyperref[c:c1]{B.1.} and \hyperref[c:c5]{B.5.} are satisfied from the Assumption \ref{ass:compact} and Lemma \ref{lemma:lemma1}. For modified regularity Condition \hyperref[c:c2']{B.10.}, the existence of $C_{\Psi_1}, C_{\Psi_2}$ is satisfied by fourth, and fifth statements of the Lemma \ref{lemma:lemma3}. The bounds in expectation are also satisfied from the second and seventh statements of the Lemma \ref{lemma:lemma3}. The first part of the modified regularity Condition \hyperref[c:c3']{B.11.} holds from the smoothness of the $\mathcal{I}_j(\bftheta)$ stated in Corollary 11 of \citeA{li2025globally}. For the second statement, we can take $c_0 = \frac{m_1}{m_2} c$ by applying the eighth statement of Lemma \ref{lemma:lemma3} together with Assumption \ref{ass:information1}.
The modified Condition \hyperref[c:c4']{B.12.} holds by applying Lemma \ref{lemma:lemma5}, where the conditions in the Lemma hold from the first and the third statement of Lemma \ref{lemma:lemma3}.
The modified Conditions \hyperref[c:c6a']{B.13.}, and \hyperref[c:c7a']{B.14.} are satisfied following the identical reasoning as Corollary 11 of \citeA{li2025globally}, using $m_1$ as the lower bound for $\inf_{j \in \mathcal{J}_{\infty}, \bftheta \in \bfTheta} B_j( \bfxi_j )$, and letting $C_{\bfxi} = m_1/2$. The last modified regularity Condition \hyperref[c:c7b']{B.15.} is satisfied from the arguments following Lemma 25 in \citeA{li2025globally}. Specifically, under Conditions \hyperref[c:c6a']{B.13.} and \hyperref[c:c7a']{B.14.}, we obtain
\begin{equation*}
\mathrm{D}_{\mathrm{KL}}( f_{\bftheta^{\ast}, j} || f_{\bftheta, j}) \ge \frac{C_{\bfxi}}{m_2} (\bftheta - \bftheta^{\ast})^{T} \mathcal{I}_j(\bftheta^{\ast}) (\bftheta - \bftheta^{\ast}).
\end{equation*}
The constant $\frac{C_{\bfxi}}{m_2}$ does not depend on $j \in \mathcal{J}_{\infty}$, taking summation over any subset of size $T_r$ or greater preserves the inequality. 
\end{proof}

\begin{proof}[Proof of Theorem \ref{thm:normality}]
Throughout the proof, we note under Assumption~\ref{ass:r1}, Assumption \ref{ass:dimension} holds from Assumption \ref{ass:information1}.
\\ 1. Under Assumption~\ref{ass:r1}, combined with Assumptions~\ref{ass:compact}, 
\ref{ass:dimension}, and Lemma \ref{lemma:lemma2}, the statement follows directly 
by applying Theorem 4 in \citeA{li2025globally}. We now prove the result under 
Assumption \ref{ass:r2}. Define
\begin{equation*}
M(\bftheta; \bar{\bfpi}_{T_r})
= \frac{1}{T_r} \sum_{s=1}^{T_r} 
  \mathbb{E}\bigl[ \log f_{\bftheta,j_s}(Y_{s}) \mid \mathcal{F}_{s-1} \bigr].
\end{equation*}
To apply Theorem 4 in \citeA{li2025globally}, it suffices to show that there exists $r_0$ such that for all 
$r \ge r_0$,
\begin{equation*}
\sup_{\bftheta:\,\|\bftheta - \bftheta^{\ast}\| \ge \epsilon}
M(\bftheta; \bar{\bfpi}_{T_r})
\;\le\;
M(\bftheta^{\ast}; \bar{\bfpi}_{T_r}) - \eta,
\end{equation*}
for some $\eta>0$. From Lemma \ref{lemma:lemma6}, modified regularity conditions are satisfied under Assumptions \ref{ass:r2} -- \ref{ass:information3}. Since Condition \hyperref[c:c7b']{B.15.} is satisfied for the selected set $A_r=\{j_1,\ldots,j_{T_r}\}\in\mathcal{G}_{T_r}$, we have
\begin{align*}
M(\bftheta^{\ast}; \bar{\bfpi}_{T_r}) 
- M(\bftheta; \bar{\bfpi}_{T_r})
&= \frac{1}{T_r} \sum_{s = 1}^{T_r} \mathrm{D}_{\mathrm{KL}}( f_{\bftheta^{\ast}, j_s} || f_{\bftheta, j_s}) 
\\ &= \frac{1}{T_r} \sum_{s=1}^{T_r} \mathbb{E} \Big[ \log \frac{f_{\bftheta^{\ast},j_s}(Y_s)}{f_{\bftheta,j_s}(Y_s)} \Big| \mathcal{F}_{s-1} \Big]
\\ & \ge \tilde{C}_{\mathrm{KL}} \cdot 
\frac{1}{T_r} \sum_{s = 1}^{T_r} 
(\bftheta - \bftheta^{\ast})^\top 
\mathcal{I}_{j_s}(\bftheta^{\ast})
(\bftheta - \bftheta^{\ast}).
\end{align*}
Rearranging and taking $\sup_{\bftheta:\,\|\bftheta - \bftheta^{\ast}\|_2 \ge \epsilon}$, 
we obtain for all $r \geq r_0$
\begin{align*}
\sup_{\bftheta:\,\|\bftheta - \bftheta^{\ast}\|_2 \ge \epsilon}
M(\bftheta; \bar{\bfpi}_{T_r})
&\le
M(\bftheta^{\ast}; \bar{\bfpi}_{T_r})
- \tilde{C}_{\mathrm{KL}} \cdot 
\inf_{\bftheta:\,\|\bftheta - \bftheta^{\ast}\|_2 \ge \epsilon}
\Bigl\{
\frac{1}{T_r} (\bftheta - \bftheta^{\ast})^T
\Bigl(\sum_{s=1}^{T_r} \mathcal{I}_{j_s}(\bftheta^{\ast})\Bigr)
(\bftheta - \bftheta^{\ast})
\Bigr\} 
\\&\le
M(\bftheta^{\ast}; \bar{\bfpi}_{T_r})
- \tilde{C}_{\mathrm{KL}} \cdot 
\lambda_{\min} \Bigl(
\frac{1}{T_r} \sum_{s=1}^{T_r} \mathcal{I}_{j_s}(\bftheta^{\ast})
\Bigr)
\inf_{\bftheta:\,\|\bftheta - \bftheta^{\ast}\|_2 \ge \epsilon}
\|\bftheta - \bftheta^{\ast}\|_2^2 
\\&\le
M(\bftheta^{\ast}; \bar{\bfpi}_{T_r})
- \tilde{C}_{\mathrm{KL}} \cdot c \cdot \frac{m_1}{m_2}\,\epsilon^2,
\end{align*}
where the first inequality follows from $\lambda_{min}(\sum_{s=1}^{T_r} \mathcal{I}_{j_s}(\bftheta^{\ast})) I_d \preceq \sum_{s=1}^{T_r} \mathcal{I}_{j_s}(\bftheta^{\ast})$, and the last inequality follows from Assumption \ref{ass:information1} and the last
statement of Lemma \ref{lemma:lemma3}, which together imply
\begin{equation*}
\lambda_{\min} \Bigl(
\frac{1}{T_r} \sum_{s=1}^{T_r} \mathcal{I}_{j_s}(\bftheta^{\ast})
\Bigr)
\;\ge\;
\frac{m_1}{m_2}\,c.
\end{equation*}
Setting $\eta = \tilde{C}_{\mathrm{KL}} \cdot c \cdot \frac{m_1}{m_2}\,\epsilon^2$ yields
\begin{equation*}
\sup_{\bftheta:\,\|\bftheta - \bftheta^{\ast}\| \ge \epsilon}
M(\bftheta; \bar{\bfpi}_{T_r})
\;\le\;
M(\bftheta^{\ast}; \bar{\bfpi}_{T_r}) - \eta,
\end{equation*}
and Theorem 4 in \citeA{li2025globally} then applies, which proves the statement under Assumption \ref{ass:r2}.

2. Under Assumption \ref{ass:r1}, combined with Assumptions \ref{ass:compact}, \ref{ass:dimension}, and Lemma \ref{lemma:lemma2}, the statement follows directly
by applying Theorem 5 in \citeA{li2025globally}. We now prove the result under Assumption \ref{ass:r2}. To apply Theorem 5 in \citeA{li2025globally}, we start by showing that
\begin{equation} \label{e:target1}
\lim_{r \to \infty}
\Big\|
\frac{1}{T_r} \sum_{s=1}^{T_r}
\bigl\{-\nabla_{\bftheta}^2 \log f_{\bftheta^{\ast}, j_s}(Y_{s})\bigr\}
- \mathcal{I}_{\infty}
\Big\|_{\mathrm{op}}
= 0,
\end{equation}
\begin{equation} \label{e:target2}
\lim_{r \to \infty}
\Big\|
\frac{1}{T_r} \sum_{s=1}^{T_r}
\nabla_{\bftheta} \log f_{\bftheta^{\ast}, j_s}(Y_{s})
\nabla_{\bftheta} \log f_{\bftheta^{\ast}, j_s}(Y_s)^\top
- \mathcal{I}_{\infty}
\Big\|_{\mathrm{op}}
= 0.
\end{equation}

By Lemma \ref{lemma:lemma6}, the Condition \hyperref[c:c3']{B.11.} is satisfied. In particular, for all $s \ge 1$ and
$\bftheta \in \bfTheta$, we have the conditional Fisher identity
\begin{equation*}
\mathbb{E}\bigl[
 -\nabla_{\bftheta}^2 \log f_{\bftheta, j_s}(Y_{s})
 \,\big|\, \mathcal{F}_{s-1}
\bigr]
=
\mathbb{E}\bigl[
 \nabla_{\bftheta} \log f_{\bftheta, j_s}(Y_{s})
 \nabla_{\bftheta} \log f_{\bftheta, j_s}(Y_{s})^\top
 \,\big|\, \mathcal{F}_{s-1}
\bigr]
= \mathcal{I}_{j_s}(\bftheta).
\end{equation*}
From the second and seventh statements of the Lemma \ref{lemma:lemma3}, 
\begin{equation*}
\sup_{j \in \mathcal{J}_{\infty}, \bftheta \in \bfTheta} \|\nabla_{\bftheta} \log f_{\bftheta, j}(Y^j)\|_2 \leq C_{\alpha},
\quad
\sup_{j \in \mathcal{J}_{\infty}, \bftheta \in \bfTheta} \|\nabla_{\bftheta}^2 \log f_{\bftheta, j}(Y^j)\|_{\mathrm{op}} \le C_{op},
\end{equation*}
implying elementwise uniform bound of $(\nabla_{\bftheta} \log f_{\bftheta, j}(Y^j) \nabla_{\bftheta} \log f_{\bftheta, j}(Y^j)^T)$ and $(\nabla_{\bftheta}^2 \log f_{\bftheta, j}(Y^j))$.
Therefore, by applying Lemma \ref{lemma:lemma4} elementwise, we have
\begin{equation} \label{e:target3}
\lim_{r \to \infty}
\Big\|
\frac{1}{T_r} \sum_{s=1}^{T_r}
\Big(
 -\nabla_{\bftheta}^2 \log f_{\bftheta^{\ast}, j_s}(Y_s)
 - \mathcal{I}_{j_s}(\bftheta^{\ast})
\Big)
\Big\|_{\mathrm{op}}
= 0,
\end{equation}
\begin{equation} \label{e:target4}
\lim_{r \to \infty}
\Big\|
\frac{1}{T_r} \sum_{s=1}^{T_r}
\Big(
 \nabla_{\bftheta} \log f_{\bftheta^{\ast}, j_s}(Y_s)
 \nabla_{\bftheta}\log f_{\bftheta^{\ast}, j_s}(Y_s)^\top
 - \mathcal{I}_{j_s}(\bftheta^{\ast})
\Big)
\Big\|_{\mathrm{op}}
= 0.
\end{equation}
Next, from the fifth statement of Lemma \ref{lemma:lemma3}, for $\bftheta_{s,1}, \bftheta_{s,2} \in \bfTheta$ for $s = 1,\cdots,T_r$, we have
\begin{equation*}
\Big\|
\frac{1}{T_r} \sum_{s=1}^{T_r}
\bigl( \mathcal{I}_{j_s}(\bftheta_{s,1}) - \mathcal{I}_{j_s}(\bftheta_{s,2}) \bigr)
\Big\|_{\mathrm{op}}
\le \frac{C_{\Psi_2}}{T_r} \sum_{s=1}^{T_r} \|\bftheta_{s,1} - \bftheta_{s,2}\|_2.
\end{equation*}
Taking $\bftheta_{s,1} = \hat{\bftheta}_{T_r}^{\mathrm{ML}}$ and $\bftheta_{s,2} = \bftheta^{\ast}$, and using the first part of the theorem (which shows $\hat{\bftheta}_{T_r}^{\mathrm{ML}} \to \bftheta^{\ast}$
almost surely), we obtain
\begin{equation*}
\lim_{r \to \infty}
\Big\|
\frac{1}{T_r} \sum_{s=1}^{T_r}
\bigl(
  \mathcal{I}_{j_s}(\hat{\bftheta}_{T_r}^{\mathrm{ML}})
  - \mathcal{I}_{j_s}(\bftheta^{\ast})
\bigr)
\Big\|_{\mathrm{op}}
= 0.
\end{equation*}
Combining this with the Assumption \ref{ass:information2} and using the triangle inequality, we obtain
\begin{equation} \label{e:fisherlimit}
\lim_{r \to \infty} \Big \| \frac{1}{T_r}\sum_{s=1}^{T_r}
\mathcal{I}_{j_s}(\bftheta^{\ast}) - \mathcal{I}_{\infty} \Big \|_{op} = 0. 
\end{equation}
Now, we can apply the Part I to Part III proof in \citeA{li2025globally} with a few fixes. We substitute $\sum_{a \in \mathcal{A}} \pi(a) \mathcal{I}_a(\bftheta^{\ast})$ with $\mathcal{I}_{\infty}$ throughout Parts I to III. In Part I, we use the second statement of Lemma \ref{lemma:lemma3} to argue the asymptotic normality. Specifically, define $\psi_{T_r,s} = \frac{1}{\sqrt{T_r}} \mathbf{b}^T \nabla_{\bftheta} \log f_{\bftheta^{\ast},j_s}(Y_s)$, with any $\mathbf{b} \in \mathbb{R}^d$ satisfying $\|\mathbf{b}\|_2 = 1$. Then, we can prove 
\begin{equation*}
\sum_{s=1}^{T_r} \mathbb{E}[ \psi_{T_r, s}^2 \mathbf{1}_{(| \psi_{T_r,s}| > \epsilon)} \mid \mathcal{F}_{s-1} ] 
\le \frac{C_{\alpha}^2}{T_r} \sum_{s=1}^{T_r} P( |\mathbf{b}^T \nabla_{\bftheta} \log f_{\bftheta^{\ast}, j_s}(Y_s)| \ge \sqrt{T_r} \epsilon \mid \mathcal{F}_{s-1}) 
\to 0,
\end{equation*}
as $r \to \infty$ since $|\mathbf{b}^T \nabla_{\bftheta} \log f_{\bftheta^{\ast}, j_s}(Y_s)| \leq C_{\alpha}$. 

For Part II, we have proved the convergence in \eqref{e:target3}, \eqref{e:target4}, and \eqref{e:fisherlimit}. Lemma \ref{lemma:lemma4} is used instead of Lemma 15 in \citeA{li2025globally}. Note that from the seventh statement of Lemma \ref{lemma:lemma3}, we have almost sure bound for the $\sup_{j \in \mathcal{J}_{\infty}, \bftheta \in \bfTheta} \| \nabla_{\bftheta}^2 \log f_{\bftheta, j}(Y^j) \|_{op}$, which indicates finite $(2+\eta)$ moment, allowing to apply Lemma \ref{lemma:lemma4}.   

For Part III, we replace $\Psi_2^{a_j}$ in the Lipschitz bound for $\|\nabla_{\bftheta}^2 \log f_{\bftheta_1, j}(y^j) - \nabla_{\bftheta}^2 \log f_{\bftheta_2, j}(y^j)\|_{op}$ in Condition \hyperref[c:c3]{B.3.} with stronger bound $C_{\Psi_2}$ defined in the Condition \hyperref[c:c2']{B.10.}, which follows from the fifth statement of Lemma \ref{lemma:lemma3}. Then the statement follows from the identical argument in \citeA{li2025globally}.

3. Under Assumption \ref{ass:r1}, combined with Assumptions \ref{ass:compact},
\ref{ass:dimension}, and Lemma \ref{lemma:lemma2}, the statement follows directly by applying the second statement of Theorem 2 in \citeA{li2025globally}. We devote the rest to proving the result under Assumption \ref{ass:r2}. To apply the second statement of Theorem 2 in \citeA{li2025globally} under Assumption \ref{ass:r2}, we
need to first replace the Condition \hyperref[c:c6b]{B.8.} to find $\underline{C} > 0$ satisfying
\begin{equation*}
\frac{1}{T_r} \sum_{s=1}^{T_r} B_{j_s}(\bfalpha_{j_s}^T \bftheta) \bfalpha_{j_s} \bfalpha_{j_s}^T \succeq \underline{C} I_d.
\end{equation*}
From the last statement of Lemma \ref{lemma:lemma3} combined with Assumption \ref{ass:information1}, we can set $\underline{C} = c \cdot \frac{m_1}{m_2}$. We also need to show some $\eta > 0$,
\begin{equation} \label{e:target5}
\sup_{j \in \mathcal{J}_{\infty}, \bftheta \in \bfTheta} \mathbb{E}_{Y \sim f_{\bftheta^{\ast}, j}} \| \nabla_{\bftheta} \log f_{\bftheta, j}(Y) \|_2^{2 + \eta} < \infty. 
\end{equation}
This holds from the second statement of Lemma \ref{lemma:lemma3}. Then, following the same reasoning as in \citeA{li2025globally}, we prove the statement. 
\end{proof}

\end{document}